\newtheorem{theorem}{Theorem}[section]
\newtheorem{corollary}[theorem]{Corollary}
\newtheorem{lemma}[theorem]{Lemma}
\newtheorem{proposition}[theorem]{Proposition}
\theoremstyle{definition}
\newtheorem{remark}[theorem]{Remark}
\newtheorem{definition}[theorem]{Definition}
\newtheorem{example}[theorem]{Example}
\newcommand{\ds}{\displaystyle}
\newcommand{\C}{\mathbb{C}}
\newcommand{\R}{\mathbb{R}}
\newcommand{\Z}{\mathbb{Z}}
\newcommand{\Q}{\mathbb{Q}}
\newcommand{\N}{\mathbb{N}}
\newcommand{\M}{{\rm M}} 
\newcommand{\GL}{\rm GL}
\newcommand{\Qp}{\mathcal{Q}} 
\newcommand{\G}{\mathfrak G} 
\newcommand{\E}{\mathfrak E} 
\newcommand{\Abs}[1]{\left\lvert #1\right\rvert} 
\newcommand{\abs}[1]{| #1 |} 
\newcommand{\mfrac}[2]{\mbox{$\frac{#1}{#2}$}} 
\newcommand{\set}[1]{\left\{#1\right\}} 
\newcommand{\mat}[4]{\begin{pmatrix}#1 & #2\\#3 & #4\end{pmatrix}} 
\newcommand{\inv}{^{-1}}
\newcommand{\san}{\set{a_n}}
\newcommand{\szn}{\set{z_n}}
\newcommand{\spn}{\set{p_n}}
\newcommand{\sqn}{\set{q_n}}
\newcommand{\sdeln}{\set{\delta_n}}
\newcommand{\np}{_{n+1}}
\newcommand{\nm}{_{n-1}}
\newcommand{\an}{a_n}
\newcommand{\anp}{a\np}
\newcommand{\anm}{a\nm}
\newcommand{\zn}{z_n}
\newcommand{\znp}{z\np}
\newcommand{\znm}{z\nm}
\newcommand{\pn}{p_n}
\newcommand{\pnm}{p\nm}
\newcommand{\qn}{q_n}
\newcommand{\qnp}{q\np}
\newcommand{\qnm}{q\nm}
\newcommand{\rn}{r_n}
\newcommand{\rnm}{r\nm}
\newcommand{\deln}{\delta_n}
\newcommand{\delnm}{\delta\nm}
\newcommand{\Bb}{\overline{B}}
\newcommand{\zb}{\bar{z}}
\newcommand{\xib}{\bar{\xi}}
\newcommand{\etab}{\bar{\eta}}
\begin{document}
\date{} \author{S.G. Dani and Ojas Sahasrabudhe}
\title{{Generalized continued fraction expansions of complex numbers, and applications to quadratic and badly approximable numbers
}}
\maketitle

\section{Introduction}

Study of  continued fraction expansions for complex numbers was initiated by Adolf Hurwitz in~\cite{Hur-A}, where he considered continued fraction expansions in terms of Gaussian integers,  with respect to the nearest Gaussian integer algorithm (now referred to as the Hurwitz algorithm). It was also followed up, soon after, by Julius Hurwitz in \cite{Hur-J} who introduced  certain interesting variations. In the recent decades there have been extensions of the theme, involving expansions in terms of elements from other discrete subrings, expansions involving more general algorithms, and also expansions free of algorithmic considerations; the reader is referred to \cite{Lev}, \cite{Lak}, \cite{Hen}, \cite{DN}, \cite{D-gen}, \cite{D-lazy}, for various details in this respect. 

Apart from generalization of the framework and dealing with issues of convergence etc., one of the themes involved concerns extension of  
the classical theorem of Lagrange  characterizing  real quadratic surds as the numbers whose simple continued fraction expansions  are eventually periodic. The theme, involved in the works of  A.\,Hurwitz \cite{Hur-A} and J.\,Hurwitz \cite{Hur-J} establishing the analogue in their respective contexts, was taken up in a broader  framework in \cite{DN} and~\cite{D-gen}, where certain general sufficient conditions are described under which the analogue of Lagrange's theorem holds. 
    
In another direction, R.\,Hines  \cite{Hin} noted, more recently, the existence of circles in $\C$ all of whose points are badly approximable with respect to Gaussian integers, via a study of the continued fraction expansion of zeroes of Hermitian forms, with respect to the Hurwitz algorithm. 

In this paper we take up the study of these topics in the framework of the  generalized notion of continued fraction expansions as studied in \cite{DN} and \cite{D-gen};  and also 
introduce, in \S\,3,  the expansions with respect to all Euclidean subrings of $\C$, though the principal results focus on the rings of Gaussian and Eisenstein integers. Also, in \S\,3 we introduce  a common approach to studying zeroes of binary forms, via consideration of the action of $\GL(2,\Z)$ on the space of the forms (see Theorem~\ref{finmat}), putting the two streams mentioned above, concerning quadratic surds and badly approximable numbers respectively, in a common perspective. The proof of the theorem is facilitated by the notion of relative errors introduced here and certain simple observations about them (see \S\,2), which may be of independent interest. 
    
One of the crucial properties of the classical simple continued fraction expansions that the convergents are the ``best approximants" has found partial  generalization for continued fractions expansions associated with the nearest integer algorithms.  In \S\,4 we generalize the result, putting the issue involved in a broader perspective.    
    
For continued fraction expansions, understanding the behaviour of the sequence of denominators of the convergents plays a crucial role in many problems (see \S\,2 for the general form of these notions, as involved here). 
One of the questions addressed in the literature in this respect, starting from the paper of A. Hurwitz \cite{Hur-A}, concerns whether, and under what conditions, are the absolute values of the denominators monotonically increasing. In \S\,5 we discuss a general framework for studying the issue, developing further on \cite{DN} and \cite{D-gen}, and describe various situations, together with certain new examples, in which the desired monotonicity can be concluded. One of the features emphasized here is that there are vast classes of algorithms, and also non-algorithmic procedures, for which the monotonicity statement for the denominators holds for the corresponding continued fraction expansions, and in particular enables deducing analogues of Lagrange's theorem. We may note that as the standard Hurwitz algorithm does not share some of the interesting properties of  the classical simple continued fraction algorithm for real numbers, recourse to the flexibility and generality available can be helpful in some contexts; see~\cite{DN} for an example of this. In \S\,6 we specialize to the ring of Gaussian integers and describe a class of algorithms complementary to those studied in \cite{DN} and uphold monotonicity of the sequence of absolute values of the denominators. Theorem~\ref{Gaussian} proved in this respect generalizes the results of J.\,Hurwitz \cite{Hur-J} for continued fraction expansions with respect to even Gaussian integers, viz. $x+iy$ such that $x+y$ is even.

\section{Preliminaries}
Let $\Gamma$ be a discrete subring of $\C$ containing $1$. Let $z\in \C$.  We call a sequence $\san_{n\geq 0}$ of elements in $\Gamma$,
a \emph{continued fraction expansion of $z$} if  there exists a sequence $\szn_{n\geq 0}$ in $\C$ such that $z_0=z$,  $0<\abs{\zn-\an}<1$ and 
$\znp=(\zn-\an)\inv$ for all $n\geq 0$. 
We call $\san$ the sequence of \emph{partial quotients} of the continued fraction expansion, and $\szn_{n\geq 0}$ (which is unique) the corresponding \emph{iteration sequence}. We use the notation $z=\langle a_0, a_1,\ldots\rangle$ if 
$\san_{n\geq 0}$ defines a continued fraction expansion of $z.$

It may be noted that here we are considering only ``infinite continued fraction expansions", viz. $\san_{n\geq 0}$'s are infinite sequences. To ensure existence of such a sequence for $z$ (for suitable $\Gamma$'s; see Remark~\ref{rem1} below for more on this) we shall assume that $z$ is not contained in the quotient field of~$\Gamma$.  We denote by $K$ the quotient field of $\Gamma$ and by $\C'$ its complement in $\C$, namely $\C'=\C\backslash K$. To any sequence $\san_{n\geq 0}$ in $\Gamma$, we associate a pair of sequences 
$\spn_{n\geq -1}$ and $\sqn_{n\geq -1}$ defined by 
$$ p_{-1}=1, \ \ p_0=a_0, \ \ p_{n}=a_{n} p_{n-1} + p_{n-2}, \mbox{ and}$$
$$ q_{-1}=0, \ \ q_0=1, \ \ q_{n}=a_{n} q_{n-1} + q_{n-2},$$  
for $n= 1, 2, \dots $. 
We call $\spn_{n\geq -1}$, $\sqn_{n\geq -1}$ the {\it $\Qp$-pair}, and $\sqn_{n\geq 0}$  the \emph{denominator sequence}, corresponding to the sequence $\san$ of partial quotients. We also associate with the continued fraction expansion a sequence of matrices $\set{g_n}_{n\geq 0}$ defined by $$g_n:=\mat{\pn}{\pnm}{\qn}{\qnm} \hbox{ \rm  for all } n\geq 0.$$
where  $\spn_{n\geq -1}$, $\sqn_{n\geq -1}$  are as above. 
We recall that $\pn\qnm-\pnm\qn =(-1)^{n+1}$, for all $n\geq 0$, as may be proved inductively; thus, each $g_n$ has determinant $\pm 1$.

\subsection{General properties of continued fraction expansions}

We begin by noting certain interrelations between various sequences associated with a continued fraction expansion, following the notation as above. 

\begin{proposition}\label{lazy1}
	Let $\Gamma$ be a discrete subring in $\C$. Let $z\in \C'$, $\san_{n\geq 0} $ be a continued fraction expansion of $z$, $\szn_{n\geq 0}$ the corresponding iteration sequence and 
	{$\spn_{n\geq -1}$, $\sqn_{n\geq -1}$}  the corresponding $\Qp$-pair. Then  for all $n\geq 0$ the following statements hold: 
	\begin{enumerate}[(i)]
		\item \label{lazy1.1}
		$\qn z - \pn = (-1)^n (z_1\cdots \znp)\inv$; in particular 
		$\qn\neq 0$;
		\item \label{lazy1.3}
		$(\znp\qn+\qnm) z = \znp\pn+\pnm$; 
		\item \label{lazy1.4}
		$\abs{z-\frac{\pn}{\qn}} = \abs{\qn}^{-2}\abs{\znp +\frac{\qnm}{\qn}}\inv$. 
    \end{enumerate}
\end{proposition}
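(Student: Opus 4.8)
The plan is to obtain all three identities from two structural facts: the common linear recursion $x_n = a_n x_{n-1} + x_{n-2}$ satisfied by $\{p_n\}$ and $\{q_n\}$, and the relation $z_n - a_n = z_{n+1}^{-1}$ coming from $z_{n+1} = (z_n - a_n)^{-1}$. The pivotal remark is that $E_n := q_n z - p_n$ again satisfies $E_n = a_n E_{n-1} + E_{n-2}$ for $n \geq 1$, with $E_{-1} = q_{-1}z - p_{-1} = -1$ and $E_0 = z - a_0$. For (i) I would proceed by induction, checking that the proposed value $F_n := (-1)^n(z_1\cdots z_{n+1})^{-1}$ (with the empty product read as $1$) matches $E_n$ at $n=-1$ and $n=0$, where $E_0 = z - a_0 = z_1^{-1}$, and that $F_n$ obeys the same recursion; the inductive step collapses, after pulling out the factor $(-1)^n(z_1\cdots z_n)^{-1}$, to the relation $z_n - a_n = z_{n+1}^{-1}$. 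This yields (i), and $q_n z - p_n \neq 0$ is then automatic, being the reciprocal of a nonzero complex number.

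The clause ``$q_n \neq 0$'' needs a separate, slightly non-formal argument, since it does not follow from the displayed identity alone. From $0 < |z_i - a_i| < 1$ one gets $|z_{i+1}| = |z_i - a_i|^{-1} > 1$ for every $i \geq 0$, hence $|q_n z - p_n| = |z_1\cdots z_{n+1}|^{-1} < 1$. If $q_n = 0$, then $-p_n = q_n z - p_n$ would be a nonzero element of $\Gamma$ of absolute value $< 1$; but a discrete subring of $\C$ has no such element, since the powers of one would form a bounded infinite subset accumulating at $0$. Granting (i), part (ii) is then a one-line computation: using the formula of (i) (and $E_{-1} = -1$ to cover $n = 0$), the quantity $z_{n+1}(q_n z - p_n) + (q_{n-1}z - p_{n-1})$ equals $(-1)^n(z_1\cdots z_n)^{-1} + (-1)^{n-1}(z_1\cdots z_n)^{-1} = 0$, and rearranging $z_{n+1}(q_n z - p_n) = -(q_{n-1}z - p_{n-1})$ gives exactly (ii).

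For (iii) I would combine (ii) with the identity $p_n q_{n-1} - p_{n-1}q_n = (-1)^{n+1}$ recalled in \S\,2: multiplying $(q_n z_{n+1} + q_{n-1})z = p_n z_{n+1} + p_{n-1}$ by $q_n$ and subtracting $p_n(q_n z_{n+1} + q_{n-1})$ gives $(q_n z_{n+1} + q_{n-1})(q_n z - p_n) = q_n p_{n-1} - p_n q_{n-1} = (-1)^n$. In particular $q_n z_{n+1} + q_{n-1} \neq 0$ (equivalently, $g_n$ has determinant $\pm 1$, hence does not annihilate the nonzero vector $(z_{n+1}, 1)$). Dividing by $q_n$, which is nonzero by (i), yields $z - \frac{p_n}{q_n} = \frac{q_n z - p_n}{q_n} = \frac{(-1)^n}{q_n(q_n z_{n+1} + q_{n-1})} = \frac{(-1)^n}{q_n^2(z_{n+1} + q_{n-1}/q_n)}$, and taking absolute values gives (iii). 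The only steps that require genuine care rather than bookkeeping are the deduction of $q_n \neq 0$ from the discreteness of $\Gamma$, and the check that $q_n z_{n+1} + q_{n-1}$ does not vanish before one divides by it.
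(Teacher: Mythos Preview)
Your argument is correct and follows exactly the route the paper indicates: prove (i) by induction using the shared recursion for $E_n=q_nz-p_n$ and the relation $z_n-a_n=z_{n+1}^{-1}$, then obtain (ii) and (iii) by straightforward manipulation (the paper in fact omits all details). Your careful justification of $q_n\neq 0$ via discreteness of $\Gamma$ is a point the paper leaves implicit, and your check that $q_nz_{n+1}+q_{n-1}\neq 0$ before dividing is well taken.
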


\begin{proof} 
	The first statement may be proved inductively, and the others can be deduced from it via simple manipulations. We omit the details; see \cite{D-gen} for an idea of the proofs. 
\end{proof}

\begin{proposition}\label{qntoinfinity} 
	Let the notation be as above. Then $|\qn|\to \infty$ as $n\to \infty$. 
\end{proposition}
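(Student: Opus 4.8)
The idea is to exploit the recursion $q_n = a_n q_{n-1} + q_{n-2}$ together with the fact that the partial quotients $a_n$ cannot be too small, and then amplify this into a genuine growth estimate. First I would establish a lower bound on $|a_n|$. Since $z_{n+1} = (z_n - a_n)^{-1}$ and $|z_n - a_n| < 1$, we have $|z_{n+1}| > 1$ for all $n \geq 0$, so $|z_n| > 1$ for all $n \geq 1$. Because $\Gamma$ is discrete, the points of $\Gamma$ at distance less than $1$ from $0$ form a finite set, and $a_n = z_n - (z_{n+1})^{-1}$ has $|z_n| > 1$ while $|(z_{n+1})^{-1}| < 1$; one gets that $|a_n|$ is bounded below away from $0$, in fact $|a_n| \geq |z_n| - 1$, but more usefully there is a constant depending only on $\Gamma$ bounding $|a_n|$ from below whenever $n$ is large, unless $z_n$ itself is close to $0$ — which is exactly what we need to rule out for monotone-type arguments. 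The cleanest route, though, is via Proposition~\ref{lazy1}\eqref{lazy1.1}: $q_n z - p_n = (-1)^n (z_1 \cdots z_{n+1})^{-1}$. Since each $|z_k| > 1$ for $k \geq 1$, the right-hand side is bounded in modulus by $1$; that alone does not give $|q_n| \to \infty$, so I need to argue that $|z_1 \cdots z_{n+1}|^{-1} \to 0$.

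Here is where the discreteness of $\Gamma$ enters decisively. Suppose, for contradiction, that $|q_n| \not\to \infty$. Then there is a subsequence along which $|q_n|$ stays bounded, hence (being elements of the discrete set $\Gamma$) the $q_n$ take only finitely many values along that subsequence; similarly, from $|q_n z - p_n| \leq 1$ we see that along that subsequence $|p_n| \leq |q_n||z| + 1$ is also bounded, so the pairs $(p_n, q_n)$ range over a finite set. Thus there exist $m < n$ in this subsequence with $(p_m, q_m) = (p_n, q_n)$. Using $\det g_n = \pm 1$ and the structure of the $\mathcal{Q}$-pair, I would derive a contradiction: the matrices $g_n = \mat{p_n}{p_{n-1}}{q_n}{q_{n-1}}$ all lie in $\GL(2,\Gamma)$ (well, with entries in $\Gamma$ and determinant $\pm1$), and the relation $g_n = g_{n-1} \mat{a_n}{1}{1}{0}$ shows the sequence $g_n$ is ``strictly advancing''; if two columns repeat one can back out $z = p_m/q_m \in K$, contradicting $z \in \C'$. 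Alternatively and more directly: from Proposition~\ref{lazy1}\eqref{lazy1.3}, $z = (z_{n+1} p_n + p_{n-1})/(z_{n+1} q_n + q_{n-1})$, and if $(p_n, q_n) = (p_m, q_m)$ with the corresponding $(p_{n-1}, q_{n-1})$, $(p_{m-1}, q_{m-1})$ possibly differing, a short manipulation forces $z \in K$.

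To make the contradiction fully rigorous I would proceed as follows. If $|q_n|$ is bounded along a subsequence, pass to a further subsequence on which $(p_n, q_n, p_{n-1}, q_{n-1})$ is constant, equal to $(p,q,p',q')$ say. Then along this subsequence $q_n z - p_n = qz - p$ is a fixed nonzero number (nonzero since $z \notin K$), while by Proposition~\ref{lazy1}\eqref{lazy1.1} its modulus equals $|z_1 \cdots z_{n+1}|^{-1}$. But $|z_1 \cdots z_{n+1}|^{-1} = |z_1 \cdots z_m|^{-1} \cdot |z_{m+1} \cdots z_{n+1}|^{-1}$ for the earlier index $m$ in the subsequence, and each factor $|z_k| > 1$; since the gaps $n - m$ grow unboundedly along the subsequence, $|z_{m+1}\cdots z_{n+1}|^{-1} < 1$ is not by itself enough — so I must instead use that the number of factors tends to infinity, which would give $|z_1\cdots z_{n+1}|^{-1} \to 0$ only if the $|z_k|$ were bounded away from $1$. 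This is the crux, and it is handled by the pigeonhole/discreteness argument on the pairs rather than on the product: two equal pairs $(p_n,q_n)=(p_m,q_m)$ with $n>m$ directly yield, via \eqref{lazy1.1}, that $(z_1\cdots z_{n+1})^{-1} = \pm(z_1\cdots z_{m+1})^{-1}$, hence $|z_{m+2}\cdots z_{n+1}| = 1$, impossible as each factor exceeds $1$.

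**Main obstacle.** The delicate point is that a uniform lower bound $|z_k| \geq 1 + \delta$ is \emph{not} available in general — the $z_k$ can approach the unit circle — so one cannot simply say the product diverges. The resolution, as sketched, is to avoid estimating the product directly and instead use discreteness of $\Gamma$ to force a repetition among the integer pairs $(p_n,q_n)$, and then convert that repetition into the impossible identity $|z_{m+2}\cdots z_{n+1}| = 1$. Verifying that a bounded subsequence of $|q_n|$ indeed forces $(p_n,q_n)$ to repeat (which needs the bound on $|p_n|$ coming from $|q_n z - p_n|\le 1$ and boundedness of $|q_n|$) is routine once set up correctly.
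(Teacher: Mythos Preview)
Your final argument is correct and is essentially the paper's own proof: assume a bounded subsequence of $|q_n|$, use $|q_nz-p_n|<1$ to bound the corresponding $p_n$ as well, invoke discreteness of $\Gamma$ to force a repetition $(p_m,q_m)=(p_n,q_n)$ with $m<n$, and then derive from Proposition~\ref{lazy1}\eqref{lazy1.1} the contradiction $|z_{m+2}\cdots z_{n+1}|=1$ while each factor strictly exceeds~$1$. The paper phrases the last step as the self-contradictory strict inequality $|qz-p|<|qz-p|$, but the content is identical; your write-up would benefit from pruning the exploratory dead ends (the attempted bounds on $|a_n|$, the four-tuple subsequence) and going straight to this pigeonhole argument.
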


\begin{proof}
	Suppose to the contrary that $\sqn$ has a  convergent subsequence; using Proposition~\ref{lazy1}\eqref{lazy1.1} we can then get that there exists an increasing sequence $\set{n_k}$ in $\N$ such that $\set{p_{n_k}}$ and $\set{q_{n_k}}$ are convergent. Since they are contained in the discrete subring $\Gamma$ they are eventually constant, viz. $p_{n_k}=p$ and $q_{n_k}=q$, with $p,q\in \Gamma$, for all large $k$. Then for sufficiently large $k$ we have 
	$$\abs{qz-p}=\abs{q_{n_{k+1}}z-p_{n_{k+1}}}= 
	\abs{z_1\cdots z_{n_{k+1}+1}}\inv<\abs{z_1\cdots z_{n_{k}+1}}\inv=\abs{q_{n_{k}}z-p_{n_{k}}}=\abs{qz-p},$$
	which is a contradiction. This proves the proposition. 
\end{proof}

\begin{remark}\label{rem1}
Existence of  continued fraction expansions over $\Gamma$, for $z\in \C'$, evidently involves the condition that $\Gamma$ should contain elements at distance less than $1$ from $z$, and the successive $z_n$'s as above. This condition holds for all $z$ precisely for $\Gamma$ which are Euclidean rings; see \cite{D-gen} for some details. The condition holds in particular for the ring $\G$ of Gaussian integers, generated by $1$ and $i$, and the ring $\E$ of Eisenstein integers, generated by $1$ and $\omega$, the latter being a nontrivial cubic root of $1$; these will be the subrings of interest in the examples discussed in later sections. 
\end{remark}

\begin{remark}
A standard procedure for generating continued fraction expansions is through algorithms. Let $\Gamma $ be a discrete subring which is Euclidean, and $K\subset \C$ its quotient field.  
By a $\Gamma$-valued \emph{continued fraction  algorithm}, we mean a map $f: \C\to \Gamma$ such that for all $z\in \C$, $\abs{z-f(z)}< 1$ (for convenience we adopt a slightly more restrictive notion here than in \cite{D-gen}, where equality is also allowed in place of the strict inequality). Given an algorithm $f$ and $z\in \C'$,  we get an iteration sequence $\szn$ by setting $z_0=z$,  and having defined $z_0, \dots, z_n$ for some $n\geq 0$, setting $\znp=(\zn-f(\zn))\inv$; we note that as $z\in \C'$, successively each $\zn$ may be seen to be in $\C'$, and hence $f(\zn)\neq \zn$. We thus get a(n infinite) continued fraction expansion,  in the sense as above,  when we choose $\an= f(\zn)$ for all $n\geq 0$. The reader is referred to \cite{DN}, \cite{D-gen}, and \S\,s~5 and~6 below, for a variety of examples of algorithms, when $\Gamma$ is either the ring of Gaussian integers or the ring of Eisenstein integers. 
\end{remark}

\subsection{Sequence of relative errors and neat subsets}

In this section we introduce certain notions concerning continued fraction expansions and discuss certain simple properties that play a crucial role in our results. 

As before let $\Gamma$ be a discrete subring of $\C$, $K$ the quotient field of $\Gamma$, $\C'=\C\backslash K$, $z\in \C'$ and let $\san$ be a continued fraction expansion of $z$ over $\Gamma$. Let $\szn $ be the associated iteration sequence, and $\spn_{n\geq -1}$, $\sqn_{n\geq -1}$ the corresponding $\Qp$-pair. 
 
The sequence $\deln$, $n\geq 0$ defined by 
$$\deln:=\qn(\qn z-p_n) =\qn^2\left(z-\dfrac{\pn}{\qn}\right)$$
is called the \emph{sequence of relative errors} corresponding to the expansion.

\begin{proposition}\label{theta1}
	For any  $n\geq 1$ such that $|\qnm| \leq |\qn|$ the following statements hold:
	
	i) $|\deln|\leq (|\znp|-1)\inv$ and 
	ii) $|\delnm| \leq |\deln|+1$.
\end{proposition}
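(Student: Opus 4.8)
The plan is to extract everything from the three identities in Proposition~\ref{lazy1}, rewritten in terms of the relative errors $\thn$. From Proposition~\ref{lazy1}\eqref{lazy1.4} we have
$$\thn = \qn^2\left(z-\frac{\pn}{\qn}\right) = \left(\znp + \frac{\qnm}{\qn}\right)^{-1},$$
so the basic object to control is $w_{n+1} := \znp + \qnm/\qn$, and $\thn = w_{n+1}^{-1}$. Note that by Proposition~\ref{lazy1}\eqref{lazy1.1}, $\qn z - \pn = (-1)^n(z_1\cdots z_{n+1})^{-1}$, which ties $\thn$ to the product of the iteration terms and will be convenient for the recursive relation between consecutive $\thn$'s.

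For part (i): since $|\qnm|\le|\qn|$ by hypothesis, we have $|\qnm/\qn|\le 1$, so by the triangle inequality $|w_{n+1}| = |\znp + \qnm/\qn| \ge |\znp| - 1$. Because $\{a_n\}$ is a continued fraction expansion, $\znp = (\zn - \an)^{-1}$ with $|\zn - \an| < 1$, hence $|\znp| > 1$ and $|\znp| - 1 > 0$; thus $|\thn| = |w_{n+1}|^{-1} \le (|\znp| - 1)^{-1}$, which is exactly (i).

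For part (ii): the natural route is a recursion linking $\thnm$ and $\thn$. Using the defining recurrences $\qn = \an\qnm + \qnm[n-2]$ (so $\qnm[n-2]/\qnm = q_n/q_{n-1} - a_n$, careful with indices) together with $\znp = (\zn-\an)^{-1}$, one derives an identity of the shape $\thnm = \an + \text{(something involving }\thn\text{ and }\qnm/\qn)$, or more cleanly one shows directly that $w_n$ and $w_{n+1}$ satisfy a relation forcing $|\thnm - \thn| \le$ a term bounded by $1$. Concretely, I expect $\thnm^{-1} - (\text{multiple of }\thn) $ to be an element of $\Gamma$ of absolute value at most... — actually the cleanest path: from Proposition~\ref{lazy1}\eqref{lazy1.1}, $\thnm = \qnm(\qnm z - \pnm) = (-1)^{n-1}\qnm(z_1\cdots z_n)^{-1}$ and $\thn = (-1)^n\qn(z_1\cdots z_{n+1})^{-1}$, so $\thn/\thnm = -(\qn/\qnm)\,\znp^{-1} = -(\qn/\qnm)(\zn - \an)$. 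Combined with $\qn/\qnm = \an + \qnm[n-2]/\qnm$ this should collapse, after substituting back, to $\thnm = \thn + (\text{term of modulus} \le 1)$; since $|\qnm|\le|\qn|$ controls the stray ratio, we get $|\thnm| \le |\thn| + 1$.

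The main obstacle will be bookkeeping the index shifts in the $q$-recurrence and isolating exactly which quantity is the ``$\Gamma$-valued'' or ``modulus $\le 1$'' piece; the hypothesis $|\qnm|\le|\qn|$ must be used precisely there (it is what makes $|\qnm/\qn|\le 1$), and getting the recursion into a form where that single inequality suffices — rather than needing bounds on $|z_n|$ as well — is the delicate point. Once the identity $\thnm = \thn + r_n$ with $|r_n|\le 1$ is in hand, (ii) is immediate from the triangle inequality.
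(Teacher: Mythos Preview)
Your argument for (i) is correct and coincides with the paper's: from Proposition~\ref{lazy1}\eqref{lazy1.4} one has $|\thn|=|\znp+\qnm/\qn|^{-1}$, and the hypothesis $|\qnm|\le|\qn|$ plus $|\znp|>1$ gives the bound. (A small correction: as an equality of complex numbers one has $\thn=(-1)^n(\znp+\qnm/\qn)^{-1}$, not $\thn=(\znp+\qnm/\qn)^{-1}$; this does not affect the absolute value argument.)

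For (ii) there is a genuine gap. The target identity you announce, namely $\thnm=\thn+r_n$ with $|r_n|\le 1$, is \emph{false} in general: if $|\qnm/\qn|$ is small then $|\thnm|$ can be much smaller than $|\thn|$, so $\thnm-\thn$ need not have modulus $\le 1$. What is true --- and what the paper proves --- is the exact relation
\[
\thnm=\left(\dfrac{\qnm}{\qn}\right)^{2}\thn+(-1)^{n+1}\dfrac{\qnm}{\qn},
\]
obtained by writing $z=\pn/\qn+\thn/\qn^2$ and substituting into $\thnm=\qnm(\qnm z-\pnm)$, together with $\pn\qnm-\pnm\qn=(-1)^{n+1}$. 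From this identity the hypothesis $|\qnm/\qn|\le 1$ is used \emph{twice}, once on each summand, to give $|\thnm|\le |\thn|+1$.

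Your route through the ratio $\thn/\thnm=-(\qn/\qnm)\znp^{-1}$ can in fact be completed to the same identity: insert $\znp=(-1)^n\thn^{-1}-\qnm/\qn$ (from the formula for $\thn$) and simplify, and you recover exactly the displayed relation above. But the computation was not carried out, and the guessed endpoint was the wrong one. The direct substitution used in the paper is shorter and avoids any detour through the $z_j$'s or the $q$-recurrence.
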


\begin{proof}
	 i) This is a direct consequence of Proposition~\ref{lazy1}\eqref{lazy1.4} and the condition in the hypothesis. 
	
	ii) We have
	$\qn^2\delnm=\qn^2(\qnm^2 z-\pnm\qnm)=\qnm^2\deln+\qnm\qn (\pn\qnm-\pnm\qn).$ 
	By dividing by $q_n^2$ and recalling that $\pn\qnm-\pnm\qn=(-1)^{n+1}$ we get $$\delnm=\deln\left(\dfrac{\qnm}{\qn}\right)^2+(-1)^{n+1} \left(\dfrac{\qnm}{\qn}\right).$$  
	Since by assumption $|\qnm| \leq |\qn|$ it follows that $|\delnm| \leq |\deln|+1.$
\end{proof}

\begin{definition}
We say that a subset $N$ of $\N$ is \emph {neat} for the expansion if $$|\qnm|\leq |\qn| \hbox{ \rm for all } n\in N \ \ \hfill{ \rm and }\ \ \liminf_{n\in N}|\znp|>1. $$
\end{definition}

\begin{proposition}\label{theta2}
	If $N \subset \N$ is a neat subset for the continued fraction expansion then $\set{\deln \mid n\in N }$ is a bounded subset of $\C$.
\end{proposition}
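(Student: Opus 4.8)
The plan is to deduce the statement directly from Proposition~\ref{theta1}(i) together with the two defining properties of a neat subset. First I would observe that the condition ``$|\qnm|\le|\qn|$ for all $n\in N$'' appearing in the definition of neatness is precisely the hypothesis under which Proposition~\ref{theta1}(i) applies. Hence, for every $n\in N$ with $n\ge 1$, we immediately get the bound
$$|\thn|\le \bigl(|\znp|-1\bigr)^{-1}.$$

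Next I would exploit the second defining property, namely $\liminf_{n\in N}|\znp|>1$. Choose $\varepsilon>0$ with $\liminf_{n\in N}|\znp|>1+\varepsilon$; then there is a finite set $F\subset N$ such that $|\znp|\ge 1+\varepsilon$ for all $n\in N\setminus F$, and for each such $n$ the displayed inequality gives $|\thn|\le \varepsilon^{-1}$. It remains to handle the finitely many indices in $F$: each $\thn$ is a well-defined complex number (using $\qn\neq 0$ from Proposition~\ref{lazy1}(i), and that $z$ is finite), and in particular, if $0\in N$, then $\thn=z-a_0$ has modulus $<1$ by the definition of a continued fraction expansion. Therefore $\seq{\thn\mid n\in N}$ is bounded, e.g. by $\max\bigl(\varepsilon^{-1},\,\max_{n\in F}|\thn|\bigr)$, which proves the proposition.

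I do not anticipate a genuine obstacle here, as the proposition is essentially a packaging of Proposition~\ref{theta1}(i): the real content was already established there, and what remains is the elementary observation that a $\liminf$ bound controls all but finitely many terms of the relevant sequence, the remaining finitely many being automatically finite. The only points requiring a word of care are the bookkeeping at small indices (in particular $n=0$, should it lie in $N$) and making explicit that ``bounded set'' only needs a single finite bound rather than uniformity in the tail.
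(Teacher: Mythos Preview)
Your proof is correct and follows essentially the same approach as the paper: both use the bound $|\thn|\le(|\znp|-1)^{-1}$ (the paper re-derives it inline from Proposition~\ref{lazy1}(\ref{lazy1.4}) rather than citing Proposition~\ref{theta1}(i), but this is the same inequality), and then invoke the $\liminf$ condition to bound all but finitely many terms. Your explicit handling of the finitely many exceptional indices and the case $n=0$ is a bit more careful than the paper's terse conclusion, but there is no substantive difference.
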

\begin{proof}
	 We have 
	$\abs{\deln}=\abs{\qn}^2 \abs{z-\pn/\qn} =\abs{\znp+\qnm/\qn}\inv,$ 
	by  Proposition \ref{lazy1}\eqref{lazy1.4}, for all $n\in \N$. 
	For $n\in N$, since $\abs{\qnm/\qn}\leq 1<\abs{\znp}$, this implies  that 
	$\abs{\deln} \leq (\abs{\znp}-\abs{\qnm/\qn})\inv \leq (\abs{\znp}-1)\inv$.  
	Thus for any $\alpha \in (1, \liminf_{n\in N}\abs{\znp})$, we get  
	$\abs{\deln} \leq (\alpha -1)\inv$ for all sufficiently large $n\in N$.  
	Hence $\set{\deln \mid n\in N}$ is bounded.
\end{proof}

\begin{proposition}\label{neatsubsets}
	An infinite neat subset exists for a continued fraction expansion $\san$ if any one of the following conditions hold (in the notation as above):
	
	i) $\abs{\qnm} \leq \abs{\qn}$ for all $n\in \N$;
	
	ii) $\liminf_{n\in \N}\abs{\znp}>1$; 
	
	iii) the expansion is given by an algorithm $f$ for which there exists  $r\in (0,1)$ such that $\abs{\zeta - f(\zeta)}<r$ for all $\zeta\in \C$.  
	
	If the conditions as in (i) and (iii) hold then (the whole of) $\N$  is a neat subset.
\end{proposition}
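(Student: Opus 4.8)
The plan is to dispose of conditions (iii) and (ii) quickly and then concentrate on (i).

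First note that (iii) implies (ii): if $f$ is an algorithm with $|\zeta-f(\zeta)|<r<1$ for all $\zeta\in\C$, then $a_n=f(\zn)$ and so $|\znp|=|\zn-a_n|^{-1}=|\zn-f(\zn)|^{-1}>1/r$ for every $n$, whence $\liminf_{n\in\N}|\znp|\ge 1/r>1$. For (ii) itself, set $N:=\{\,n\in\N:|\qnm|\le|\qn|\,\}$; if $N$ were finite there would be $n_0$ with $|\qnm|>|\qn|$ for all $n>n_0$, making $(|\qn|)_{n\ge n_0}$ strictly decreasing and contradicting Proposition~\ref{qntoinfinity}, so $N$ is infinite, and since $\liminf_{n\in N}|\znp|\ge\liminf_{n\in\N}|\znp|>1$ it is neat. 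By the above the same $N$ works under (iii); and the final assertion is then immediate, for if (i) and (iii) both hold then $|\qnm|\le|\qn|$ for all $n$ while $\liminf_{n\in\N}|\znp|\ge 1/r>1$, so $\N$ itself meets both conditions in the definition of neatness.

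The substantive case is (i). Here $|\qnm|\le|\qn|$ for all $n$, so the first requirement of neatness is automatic for every subset, and we must produce an infinite $N$ with $\liminf_{n\in N}|\znp|>1$. Since $|\znp|>1$ always, such an $N$ exists if and only if $|\znp|\not\to 1$, so it suffices to derive a contradiction from the hypothesis $|\znp|\to 1$. Assume this. Each $a_n$ lies within distance $1$ of $\zn$ while $|\zn|\to 1$, so for large $n$ the $a_n$ lie in a bounded region and hence, $\Gamma$ being discrete, take only finitely many values; thus $\zn=a_n+\znp^{-1}$ is a bounded sequence. Also, from $\qn=a_n\qnm+q_{n-2}$ and $|q_{n-2}|\le|\qnm|$ one gets $|\qn|\le(|a_n|+1)|\qnm|$, so $u_n:=\qnm/\qn$ satisfies $0<c_0\le|u_n|\le 1$ for large $n$ with $c_0$ a positive constant; moreover $|\thn|=|\znp+u_n|^{-1}$ by Proposition~\ref{lazy1}(\ref{lazy1.4}), and $\prod_{k=1}^{n}|u_k|=|\qn|^{-1}\to 0$ by telescoping.

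The plan from here is a compactness argument: pass, diagonally, to a subsequence $\{n_j\}$ along which $z_{n_j+k}$, $a_{n_j+k}$ and $u_{n_j+k}$ converge for every fixed integer $k$, obtaining a bi-infinite ``limit orbit'' consisting of unimodular points $\zeta^{(k)}$ and elements $b_k\in\Gamma$ with $|\zeta^{(k)}-b_k|=1$ and $\zeta^{(k+1)}=(\zeta^{(k)}-b_k)^{-1}$, together with the limiting $q$-ratios, which by $u_{m+1}^{-1}=a_{m+1}+u_m$ form a companion orbit inside the region $c_0\le|w|\le 1$. The contradiction is then to be read off from the clash between the forced \emph{equalities} $|\zeta^{(k)}-b_k|=1$ in the limit and the strict inequalities $|\zn-a_n|<1$ that must persist along the orbit while $|\qn|\to\infty$: the identities $|\thn|=|\znp+u_n|^{-1}$ and $\prod_{k}|u_k|=|\qn|^{-1}\to 0$ localise the accumulation of $(u_n)$ on the unit circle in a manner tied to the $\zeta^{(k)}$, which discreteness of $\Gamma$ rules out. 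I expect exactly this step — showing that monotonicity of $(|\qn|)$ is incompatible with $|\znp|\to 1$ — to be the main obstacle; (ii) and (iii) are routine consequences of $|\qn|\to\infty$ and of the definition of an algorithm.
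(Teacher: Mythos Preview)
Your treatment of (ii), (iii), and the final assertion is correct and matches the paper's proof essentially verbatim: (iii) reduces to (ii) via $|z_{n+1}|>r^{-1}$, and (ii) is handled by taking $N=\{n:|q_{n-1}|\le|q_n|\}$, which is infinite because $|q_n|\to\infty$.

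For (i), however, there is a genuine gap, and you yourself flag it. You correctly reduce the problem to showing that monotonicity of $(|q_n|)$ forces $\limsup_n|z_{n+1}|>1$, but your compactness sketch does not actually deliver the contradiction. Passing to a limit orbit with $|\zeta^{(k)}-b_k|=1$ and a companion orbit $v_{k+1}^{-1}=b_{k+1}+v_k$ inside an annulus is fine, but the sentence ``discreteness of $\Gamma$ rules it out'' is doing all the work and none of it is justified: nothing you have written prevents, for instance, a periodic limit configuration with $|\zeta^{(k)}|=1$ and $|v_k|$ bounded away from both $0$ and $1$. The telescoping fact $\prod_{k\le n}|u_k|=|q_n|^{-1}\to 0$ concerns the \emph{original} sequence, and does not pass to the subsequential limit in any obvious way, so it does not by itself force accumulation of the $v_k$ on the unit circle.

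The paper does not attempt to prove this step from scratch: it simply invokes the known fact (from \cite{DN} or \cite{D-lazy}) that if $|q_{n-1}|\le|q_n|$ for all $n$ then $\limsup_n|z_n|>1$, and then takes $N=\{n:|z_{n+1}|>\alpha\}$ for any $\alpha\in(1,\limsup|z_n|)$. If you want a self-contained argument you should look at how that lemma is proved in those references; your compactness outline, as it stands, is not a proof.
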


\begin{proof}
	i) We recall that under the condition as in the hypothesis,  $\limsup_{n\in\N} \abs{\zn}>1$; see \cite{DN} or \cite{D-lazy}. Thus for any $\alpha \in (1, \limsup_{n\in\N} \abs{\zn})$, the set $ N=\set{n\in\N \mid \abs{\znp}>\alpha}$ is infinite, and it is a neat subset for the expansion. This proves~(i). 
	
	ii) By Proposition~\ref{qntoinfinity} $\abs{\qn}\to \infty$, and hence the set 
	$N=\set{n\in \N\mid \abs{\qnm}\leq\abs{\qn}}$ is infinite. The condition in the hypothesis then implies that $N$ is neat, proving~(ii). 
	
	iii) For any $n\geq 1$ we have $\abs{\znp}=\abs{\zn-\an}\inv=\abs{\zn-f(\zn)}\inv>r\inv>1$. Thus the condition as in (ii) holds in this case and hence the desired conclusion follows. 
	
	The last statement follows from the observation as in the proof of~(iii). 
\end{proof}

The whole of $\N$ being a neat set will be seen to have interesting consequences. The last part of the proposition therefore highlights the importance of knowing whether for an expansion the sequence $\set{\abs{\qn}}$ is monotonic. In \S\,s 5 and 6 we shall discuss conditions for this to hold, and give some new examples for which it holds.

\section{Zeroes of integral binary forms and quadratic polynomials}
 
Let $\Gamma$, $K$ and $\C'=\C\setminus K$ be as before.  
By $\C^2$ we shall denote the space of $2$-rowed column vectors, viewed also as $2\times 1$ matrices,  with entries in $\C$; for convenience we shall also write the elements as $(\xi,\eta)^t$ with $\xi, \eta \in \C$ (with the $t$ standing for transpose). 
By $\sigma :\C\to \C$ we denote the map which is either the identity map or the complex conjugation map. For any $\xi \in \C$ we denote $\sigma (\xi)$ by $\xi^\sigma$. 
 
We denote by $\M(2,\C)$ the space of all $2\times 2$ matrices with entries in $\C$.  For any discrete subring $\Gamma$ as above, and any $k\in \N$, we denote by $\M(2, \Gamma)$ and $\M(2, k\inv\Gamma)$ the subgroups consisting of elements $X$ whose entries are in $\Gamma$ and $k\inv \Gamma$ respectively. For any matrix $X$ we denote by $X^t$ the transpose of $X$.  For $X\in \M(2,\C)$ we denote by  $X^\sigma$ the matrix obtained by applying $\sigma$ to each entry of $X$.  We say that  $X\in \M(2, \C)$ is  {\it $\sigma$-symmetric} if $X^t=X^\sigma$; thus, $X$ is $\sigma$-symmetric if it is symmetric and $\sigma$ is the identity map or Hermitian symmetric and $\sigma$ is the complex conjugation map. 

For $X= \mat{A}{B}{C}{D}\in \M(2,\C)$, by the {\it $\sigma$-form} corresponding to $X$ we mean the function $f:\C \times \C\to \C$ defined by,  for all $ \xi, \eta \in \C$,
 $$f(\xi, \eta)=A \xi^{\sigma}\xi +B\xi^{\sigma}\eta +C\eta^{\sigma}\xi +D\eta^\sigma \eta.$$ 
We note that $f(\xi, \eta)$ is the entry of the $1\times 1$ matrix $(\xi, \eta)^\sigma X (\xi, \eta)^t$.   
By a {\it nontrivial zero} of the $\sigma$-form $f$ we mean $(\xi,\eta)^t \in \C^2$ such that $\eta \neq 0$, $\xi/\eta \in \C'$ and $f(\xi, \eta)=0$.

\subsection{A theorem for $\sigma$-symmetric forms}

For any $g\in \GL(2,\C)$ and $X\in \M(2,\C)$ let $X_{g,\sigma}=
(g^t)^\sigma Xg$. We prove the following:

\begin{theorem}\label{finmat}
 Let $X\in\M(2,\C)$ be a $\sigma$-symmetric matrix, $f$ the corresponding $\sigma$-form and  let $(\xi, \eta)^t\in \C^2$ be a nontrivial zero of $f$. 
 Let $\set{g_n}$ be the sequence in $\GL(2,\Z)$ associated with a continued fraction expansion of $\xi/\eta$ and $N\subset\N$ be a neat infinite subset for the expansion. Then $\set{X_{g_n,\sigma} \mid n\in N}$ is a bounded subset in $\M(2,\C)$. Moreover, for any $X\in \M(2, k\inv \Gamma)$, where $k\in \N$,  $\set{X_{g_n,\sigma} \mid n\in N}$ is finite. 
\end{theorem}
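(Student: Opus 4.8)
The plan is to compute $X_{g_n,\sigma}$ explicitly in terms of the data attached to the expansion and show that its entries are controlled by the relative errors $\delta_n$, which are bounded on a neat set by Proposition~\ref{theta2}. Write $w=\xi/\eta\in\C'$, so $f(\xi,\eta)=0$ forces $f(w,1)=0$, i.e. $(w,1)^\sigma X(w,1)^t=0$. The key observation is that the columns of $g_n=\mat{\pn}{\pnm}{\qn}{\qnm}$, when applied to the vector $(w,1)^t$, give (up to scalars) the ``tail'' vectors of the expansion: by Proposition~\ref{lazy1}\eqref{lazy1.1} we have $\qn w-\pn=(-1)^n(z_1\cdots\znp)^{-1}$, and $(g_n)^t\mat{w}{1}{}{}$ wait --- more precisely I would consider the vector $v_n:=(g_n)^{-1}(w,1)^t$. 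Since $\det g_n=\pm1$ and $g_n\in\GL(2,\Z)$, $(g_n)^{-1}=\pm\mat{\qnm}{-\pnm}{-\qn}{\pn}$, so $v_n=\pm(\qnm w-\pnm,\,-(\qn w-\pn))^t$, whose entries are, by Proposition~\ref{lazy1}\eqref{lazy1.1}, equal to $\pm(z_1\cdots z_n)^{-1}$ and $\pm(z_1\cdots\znp)^{-1}$ up to sign. Thus $v_n$ is a nonzero complex multiple of $(1,-z\np^{-1})^t$, hmm, rather of $(z\np,-1)^t$ after scaling by $z_1\cdots\znp$.

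Next, the entries of $X_{g_n,\sigma}=(g_n^t)^\sigma X g_n$ are the values of the $\sigma$-form (and its associated bilinear pairing) on the columns of $g_n$. Writing the columns of $g_n$ as $c_n=(\pn,\qn)^t$ and $c_n'=(\pnm,\qnm)^t$, the four entries of $X_{g_n,\sigma}$ are $B_X(c_n,c_n)$, $B_X(c_n,c_n')$, $B_X(c_n',c_n)$, $B_X(c_n',c_n')$ where $B_X(u,v)=u^\sigma X v$ (a sesquilinear or bilinear form according to $\sigma$). Now I would use that $c_n = \qn\,(\,\pn/\qn,\,1\,)^t$ and $\pn/\qn = w - (\qn)^{-1}(z_1\cdots z\np)^{-1}\cdot(-1)^n$, i.e. $c_n = \qn(w,1)^t - (-1)^n(z_1\cdots z\np)^{-1}(1,0)^t$; similarly for $c_n'$ with $n-1$ in place of $n$. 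Substituting into $B_X$ and using $B_X((w,1)^t,(w,1)^t)=f(w,1)=0$ (this is where $\sigma$-symmetry of $X$ is needed so that the cross terms pair up correctly), every term that survives carries a factor $(z_1\cdots z\np)^{-1}$ or $(z_1\cdots z_n)^{-1}$ against a factor $\qn$ or $\qnm$ --- that is, the surviving terms are products of $\delta$-type quantities: $\qn(z_1\cdots z\np)^{-1}=\pm\delta_n/\qn\cdot\qn = \pm(\qn z-\pn)\qn/\!\qn$, more cleanly $\qn(\qn z-\pn)=\delta_n$, so $B_X(c_n,c_n)$ becomes a bilinear expression in $\delta_n$, $\delta\nm$, and the fixed entries $A,D$ of $X$, with coefficients of modulus~$1$.

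Therefore on the neat set $N$, where $\{\delta_n\}$ is bounded by Proposition~\ref{theta2}, the set $\{X_{g_n,\sigma}\mid n\in N\}$ is bounded in $\M(2,\C)$; this gives the first assertion. For the ``moreover'' part, suppose $X\in\M(2,k^{-1}\Gamma)$. Since each $g_n\in\GL(2,\Z)\subset\M(2,\Z)$ and $\Gamma$ is a ring containing $\Z$, every entry of $X_{g_n,\sigma}=(g_n^t)^\sigma X g_n$ lies in $k^{-1}\Gamma$ (note $\sigma$ preserves $\Gamma$, as it permutes the generators of $\G$ or $\E$, or is the identity). A bounded subset of the discrete set $k^{-1}\Gamma$ is finite; applying this to each of the four entry-positions shows $\{X_{g_n,\sigma}\mid n\in N\}$ is finite.

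The main obstacle I anticipate is the bookkeeping in the substitution step: one must verify that when $c_n=\qn(w,1)^t + (\text{small})\,(1,0)^t$ is plugged into the $\sigma$-form, the ``large $\times$ large'' term $\qn\qnm\, B_X((w,1)^t,(w,1)^t)$ indeed vanishes and the remaining terms regroup exactly into bounded multiples of $\delta_n,\delta\nm$ and the constants $A,D$ --- this is precisely where $\sigma$-symmetry ($X^t=X^\sigma$) is essential, so that $B_X((w,1)^t,(1,0)^t)$ and $B_X((1,0)^t,(w,1)^t)$ are $\sigma$-conjugates of each other and combine correctly. Once that identity is pinned down, everything else is immediate from the preliminaries.
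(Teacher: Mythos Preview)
Your treatment of the diagonal entries $A_n=f(p_n,q_n)$ and $D_n=f(p_{n-1},q_{n-1})$ is correct and is exactly what the paper does: writing $c_n=q_n(w,1)^t+\epsilon_n(1,0)^t$ with $\epsilon_n=-(q_nw-p_n)$, the leading term vanishes by $f(w,1)=0$ and the remaining terms have size at most a constant times $|\delta_n|$ and $|\delta_n|^2$; similarly for $D_n$ with $\delta_{n-1}$, which is controlled on a neat set by Proposition~\ref{theta1}(ii). Note, incidentally, that $\sigma$-symmetry is \emph{not} used here; $f(w,1)=0$ alone kills the large term.

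The gap is in the off-diagonal entry $B_n=B_X(c_n,c_n')$. Expanding, one of the two cross terms carries the coefficient $q_n^\sigma\epsilon_{n-1}$, whose modulus is
\[
|q_n(q_{n-1}w-p_{n-1})|=\Bigl|\frac{q_n}{q_{n-1}}\Bigr|\,|\delta_{n-1}|=|z_{n+1}|\,|\delta_n|,
\]
and neither $|q_n/q_{n-1}|$ nor $|z_{n+1}|$ is bounded on a neat set a priori. This is not a ``$\delta$-type quantity'' in your sense, and the $\sigma$-symmetry you invoke does not help: it makes the constants $B_X((w,1)^t,(1,0)^t)$ and $B_X((1,0)^t,(w,1)^t)$ $\sigma$-conjugate, but the accompanying coefficients $q_n^\sigma\epsilon_{n-1}$ and $\epsilon_n^\sigma q_{n-1}$ are not $\sigma$-conjugate to one another, so nothing cancels.

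There are two ways to close this. The paper's route is to abandon the direct expansion for $B_n$ and instead use $\sigma$-symmetry in a different place: since $X_{g_n,\sigma}$ is again $\sigma$-symmetric one has $C_n=B_n^\sigma$, and since $\det g_n=\pm1$ one has $\det X_{g_n,\sigma}=\det X$; hence $B_nB_n^\sigma=A_nD_n-\det X$, so $|B_n|^2$ is bounded once $A_n,D_n$ are. Alternatively, your direct computation \emph{can} be salvaged by one extra observation: on a neat set Proposition~\ref{theta1}(i) gives $|\delta_n|\le(|z_{n+1}|-1)^{-1}$, so $|z_{n+1}|\,|\delta_n|\le|z_{n+1}|/(|z_{n+1}|-1)$, and the latter is bounded because $\liminf_{n\in N}|z_{n+1}|>1$. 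Either fix completes the argument; as written, the proposal is missing one of them.
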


\begin{proof}
 Let $X= \mat{A}{B}{C}{D}\in \M(2,\C)$ be given; thus for any $\xi, \eta \in \C$ we have $f(\xi,\eta) =A \xi^{\sigma}\xi +B\xi^{\sigma}\eta +C\eta^{\sigma}\xi +D\eta^\sigma \eta$. For any $g\in \GL(2,\C)$, let $f^g:\C^2\to \C$ denote the function defined by $f^g:=f(g(\xi,\eta)^t)$.
 Then we see  that 
 $$f^{g_n}(\xi, \eta)= A_n \xi^{\sigma}\xi + B_n\xi^{\sigma}\eta + C_n\eta^{\sigma}\xi + D_n\eta^\sigma \eta$$ for all $\xi, \eta$, where $A_n, B_n, C_n, D_n$ are the entries of $X_{g_n,\sigma}$, namely, $X_{g_n,\sigma}=\mat{A_n}{B_n}{C_n}{D_n}$. 
 \\Substituting for $g_n$ as $\mat{\pn}{\pnm}{\qn}{\qnm}$ (see \S\,2) we see that 
 $$A_n=f(\pn,\qn) \hbox{ \rm and } D_n=f(\pnm,\qnm)  \hbox{ \rm for all }n\geq 0.$$
 Now let $z=\xi/\eta$ and $\deln=\qn^2(z-\frac{\pn}{\qn})$ be the sequence of relative errors for the continued fraction expansion as in the hypothesis. Thus we have 
 $\frac {\pn}{\qn}=z-\frac{\deln}{\qn^2}$ for all $n\geq 0$. Hence 
 $$A_n=f(\pn,\qn)=\qn^2\,f\left(\frac{\pn}{\qn}, 1\right)=\qn^2\,f\left(z-\frac{\deln}{\qn^2}, 1\right).$$
 On substituting in the expression for $f$, the values $\xi=z-\frac{\deln}{\qn^2}$ and $\eta =1$, the preceding equation yields
 $$\qn^{-2}A_n= A\left(z-\frac{\deln}{\qn^2}\right)^\sigma \left(z-\frac{\deln}{\qn^2}\right)+B\left(z-\frac{\deln}{\qn^2}\right)^\sigma +C\left(z-\frac{\deln}{\qn^2}\right)+D.$$ 
 As  $f(z,1)=f(\xi,\eta)=0$, it follows that 
 $$\qn^{-2}A_n= A\left((\gamma_n^\sigma+\gamma_n)z+\gamma_n^\sigma\gamma_n\right)
 + B\gamma_n^\sigma + C\gamma_n,$$ where $\gamma_n=-\deln/\qn^2$.
 We have $\abs{\gamma_n}=\abs{\deln}/\abs{\qn}^2$ and $\abs{\gamma_n^{\sigma}\gamma_n}\leq \abs{\deln}^2/\abs{\qn}^2$, as $\abs{\qn}\geq 1$. 
 Thus we get that 
 $$|A_n| \leq \left(|2Az|+|B|+|C|\right)|\deln|+|A|\,|\deln|^2, \hbox{ \rm for all } n\geq 0.$$
 Also, $|D_n|= |f(\pnm,\qnm)| \leq (|2Az|+|B|+|C|)\,|\delnm|+|Az|\,|\delnm|^2$, for all  $n\geq 0$.
 
 Now let $N$ be a neat subset of $\N$ with respect to the continued fraction expansion as in the hypothesis. Then there exists a constant $M>0$ such that for all $n\in N$, $|\deln|\leq M$, and by Proposition~\ref{theta1} we also have $|\delnm|\leq |\deln|+1 \leq M+1$. 
 Thus the above conclusions imply that $\set{A_n\mid n\in N}$ and $\set{D_n\mid n\in N}$ are bounded. 
 
 Now recall that by hypothesis $X$ is $\sigma$-symmetric. Therefore $X_{g_n,\sigma}$ is also $\sigma$-symmetric, and in particular $C_n= B_n^\sigma$ for all $n$. Thus the determinants of $X$ and $X_{g_n,\sigma}$ are, respectively, $AD-BB^\sigma $ and $A_n D_n- B_n B_n^\sigma$, for all $n$. From the definition of $X_{g_n,\sigma}$ we see that $\det(X_{g_n,\sigma})=\det (X)$ for all $n$. Hence we get that $B_nB_n^\sigma =A_nD_n-AD +BB^\sigma $. 
 Since $\set{A_n \mid n\in N}$ and $\set{D_n \mid n\in N}$ are bounded, the preceding conclusion shows that for either choice of $\sigma$, the sequence $\set{|B_n|^2\mid n\in N}$ is bounded. Thus $\set{B_n\mid n\in N}$ is  bounded, and since $C_n= B_n^\sigma$ for all $n$, it follows also that $\set{C_n\mid n\in N}$ is bounded. This completes the proof of the first assertion in the theorem. The second assertion is immediate from the first and the discreteness of $k\inv\Gamma$.  
\end{proof}

\begin{remark}\label{thebound}
A perusal of the proof of Theorem~\ref{finmat} shows, in the notation of the theorem, that  all entries of $X_{g_n,\sigma}$, $n\in N$, are 
bounded by a computable constant depending only on $X$ and $\sup\,\set{|\deln|\mid n\in N}.$  
\end{remark}

We shall next apply  the theorem to study zeroes of $\sigma$-forms. Towards this we first note here the following, in the notation as above. 

\begin{lemma}\label{lem}
 Let $X\in\M(2,\C)$ and let $f$ be the corresponding $\sigma$-form. Let $z\in \C'$, $\szn$ be the iteration sequence of a continued fraction expansion of $z$ and let $\set{g_n}$  be the associated sequence of matrices in $\GL(2,\C)$. 
 For $n\in \N$, let $f_n$ be the $\sigma $-form corresponding to $X_{g_n,\sigma}$. 
 Then  $f(z,1)=0$ if and only if $f_n(\znp,1)=0$.  
\end{lemma}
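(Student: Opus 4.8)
The plan is to track how the $\sigma$-form transforms under the action of $g_n$ and then relate the value at $(z,1)$ to the value at $(z_{n+1},1)$ using the relation between $z$ and $z_{n+1}$ recorded in Proposition~\ref{lazy1}. Recall that $g_n=\mat{\pn}{\pnm}{\qn}{\qnm}$ and that, by the definition of $X_{g_n,\sigma}$, the form $f_n$ associated with $X_{g_n,\sigma}=(g_n^t)^\sigma X g_n$ satisfies $f_n(\xi,\eta)=f\big(g_n(\xi,\eta)^t\big)$ for all $(\xi,\eta)^t\in\C^2$, since $(\xi,\eta)^\sigma (g_n^t)^\sigma X g_n (\xi,\eta)^t$ is precisely $\big(g_n(\xi,\eta)^t\big)^\sigma X \big(g_n(\xi,\eta)^t\big)$. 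Hence $f_n(z_{n+1},1)=f(\pn z_{n+1}+\pnm,\ \qn z_{n+1}+\qnm)$.

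First I would invoke Proposition~\ref{lazy1}\eqref{lazy1.3}, which gives $(\znp\qn+\qnm)z=\znp\pn+\pnm$; that is, the first coordinate $\pn z_{n+1}+\pnm$ equals $z\cdot(\qn z_{n+1}+\qnm)$. Writing $\lambda_n:=\qn z_{n+1}+\qnm$, we therefore have $g_n(z_{n+1},1)^t=(\lambda_n z,\ \lambda_n)^t=\lambda_n\cdot(z,1)^t$. Second, I would use the elementary homogeneity of the $\sigma$-form: for any scalar $\lambda\in\C$ and any $(\xi,\eta)^t$, one has $f(\lambda\xi,\lambda\eta)=\lambda^\sigma\lambda\, f(\xi,\eta)$, directly from the defining formula $f(\xi,\eta)=A\xi^\sigma\xi+B\xi^\sigma\eta+C\eta^\sigma\xi+D\eta^\sigma\eta$ (each monomial is bilinear in the pair $((\xi,\eta)^\sigma,(\xi,\eta))$). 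Consequently $f_n(z_{n+1},1)=f(\lambda_n z,\lambda_n)=\lambda_n^\sigma\lambda_n\, f(z,1)=|\lambda_n|^2\,f(z,1)$ (when $\sigma$ is conjugation) or $\lambda_n^2 f(z,1)$ (when $\sigma$ is the identity). Third, I would observe that $\lambda_n\neq 0$: indeed $\znp\neq 0$ since $z_{n+1}=(z_n-a_n)^{-1}$ is well-defined, and more to the point $\lambda_n=0$ would force $\pn z_{n+1}+\pnm=0$ as well, so that $g_n(z_{n+1},1)^t=0$, contradicting that $g_n\in\GL(2,\C)$ and $(z_{n+1},1)^t\neq 0$. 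Since $\lambda_n\neq 0$, the scalar factor $\lambda_n^\sigma\lambda_n$ is nonzero, and therefore $f(z,1)=0$ if and only if $f_n(z_{n+1},1)=0$.

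The argument is essentially a two-line computation once the right identity from Proposition~\ref{lazy1} is in hand, so there is no serious obstacle; the only point requiring a modicum of care is the bookkeeping of $\sigma$ — checking that $f(\lambda\xi,\lambda\eta)=\lambda^\sigma\lambda f(\xi,\eta)$ in both the symmetric and Hermitian cases, and that the composition $f_n=f\circ g_n$ is correctly identified with the form attached to $(g_n^t)^\sigma X g_n$ rather than some transpose or conjugate of it. Both are immediate from the matrix description $f(\xi,\eta)=(\xi,\eta)^\sigma X(\xi,\eta)^t$ together with $(g_n(\xi,\eta)^t)^\sigma=(\xi,\eta)^\sigma(g_n^t)^\sigma$. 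I would present the proof in that order: identify $f_n=f\circ g_n$, apply Proposition~\ref{lazy1}\eqref{lazy1.3} to see that $g_n$ sends $(z_{n+1},1)^t$ to a nonzero scalar multiple of $(z,1)^t$, and conclude by homogeneity.
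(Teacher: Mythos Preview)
Your proof is correct and follows essentially the same approach as the paper: identify $f_n=f\circ g_n$ from the definition of $X_{g_n,\sigma}$, use Proposition~\ref{lazy1}\eqref{lazy1.3} to see that $g_n(z_{n+1},1)^t$ is a nonzero scalar multiple of $(z,1)^t$, and conclude by homogeneity of the $\sigma$-form. The paper states these steps more tersely, but the structure and the key input are identical.
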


\begin{proof}
 The definition of the $\sigma$-form $f$ corresponding to $X$ readily shows that $(\xi, \eta)^t$ is a zero of $f$ if and only if, for any $g\in \GL(2,\C)$, $g(\xi, \eta)^t$ is a zero of the $\sigma$-form corresponding to ${X_{g,\sigma}}$. On the other hand, for all $n\in \N$, $g_n(\znp, 1)^t= (\pn\znp+\pnm, \qn\znp+\qnm)$, and by Proposition~\ref{lazy1}\eqref{lazy1.3} it is a non-zero multiple of $(z,1)^t$. Hence $(z,1)$ 
 is a zero of $f$ if and only if $(\znp,1)$ is a zero of $f_n$. 
\end{proof}

\subsection{Application to roots of quadratic polynomials} 

Now let $\Gamma$ be a discrete subring of $\C$ and $K$ be its quotient field. Let  $f(z)=az^2+bz+c$ be a quadratic polynomial, with $a,b, c\in \Gamma$ which is irreducible over $K$. It may be recalled that in the case of a quadratic polynomials with coefficients in $\Z$ 
which is irreducible over $\Q$ the classical Lagrange theorem asserts that for any root of the polynomial the partial quotients $\san$ in the simple continued fraction expansion are eventually periodic, namely there exist $k$ and $n_0$ such that $a_{n+k}=a_n$ for all $n\geq n_0$. Analogous results have been known in the framework as above, for complex quadratic polynomials, under various conditions on the continued fraction expansions; see \S 5 for some details. In the following Corollary we establish a version of such a result for a larger class of continued fraction expansions, namely those admitting a neat subset. This puts the issue in a general perspective and yields the previously known cases as well as certain new examples, which we shall discuss in the next section. 

It is easy to see that if $\san$ is the continued fraction expansion of a number $z\in \C'$ and it is eventually periodic then $z$ is a quadratic surd over $K$, namely the root of an irreducible quadratic polynomial. The import of various versions of Lagrange's theorem concerns the converse. In this regard we prove the following. 

\begin{corollary}\label{Lagrange}
 Let $z$ be the root of a quadratic polynomial $az^2+bz+c$, with $a,b,c \in \Gamma$, which is irreducible over $K$. 
 Let $\san_{n\geq 0}$ be a continued fraction expansion of $z$ with $\szn_{n\geq 0}$ as the associated iteration sequence. Suppose there exists an infinite neat subset for the expansion. Then  $\set{\znp\mid n\in N}$ is finite. 
 If moreover the continued fraction expansion is associated with an algorithm, then $\san$
 is eventually periodic. 
\end{corollary}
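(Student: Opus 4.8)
The plan is to derive the Corollary directly from Theorem~\ref{finmat} and Lemma~\ref{lem}, by encoding the quadratic polynomial as a $\sigma$-form with $\sigma$ the identity map. First I would take $X=\mat{a}{b/2}{b/2}{c}$, which is symmetric, hence $\sigma$-symmetric, and which lies in $\M(2,2^{-1}\Gamma)$; its associated $\sigma$-form is $f(\xi,\eta)=a\xi^2+b\xi\eta+c\eta^2$. Since $az^2+bz+c$ is irreducible over $K$ it has no root in $K$, so $z\in\C'$ and $(z,1)^t$ is a nontrivial zero of $f$. Applying Theorem~\ref{finmat} to this $X$, the zero $(z,1)^t$, the matrix sequence $\{g_n\}$ of the given continued fraction expansion of $z$, and the infinite neat subset $N$ provided by hypothesis, we obtain that $\{X_{g_n,\sigma}\mid n\in N\}$ is bounded in $\M(2,\C)$; since $2^{-1}\Gamma$ is discrete, it is in fact finite.

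Next I would bring in Lemma~\ref{lem}: writing $f_n$ for the $\sigma$-form corresponding to $X_{g_n,\sigma}$, one has $f_n(z_{n+1},1)=0$ for every $n$. By the previous step there are only finitely many polynomials $w\mapsto f_n(w,1)$ as $n$ ranges over $N$, and each of them has degree exactly $2$: its leading coefficient is $A_n=f(p_n,q_n)=q_n^2\bigl(a(p_n/q_n)^2+b(p_n/q_n)+c\bigr)$, which is nonzero because $q_n\neq0$ and $p_n/q_n\in K$ cannot be a root of the $K$-irreducible polynomial $az^2+bz+c$. A degree-$2$ polynomial has at most two roots, so $\{z_{n+1}\mid n\in N\}$ is contained in a finite union of two-element sets, hence finite. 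This gives the first assertion.

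For the second assertion, suppose the expansion is produced by an algorithm $\psi\colon\C\to\Gamma$, so that $a_n=\psi(z_n)$ and $z_{n+1}=(z_n-\psi(z_n))^{-1}$ for all $n$; thus the entire tail of $\{z_n\}$, and hence of $\{a_n\}$, is determined by any single term. Since $N$ is infinite while $\{z_{n+1}\mid n\in N\}$ is finite, there are $m<n$ in $N$ with $z_{m+1}=z_{n+1}$, and the recursion propagates this forward to give $z_{m+1+j}=z_{n+1+j}$, whence $a_{m+1+j}=a_{n+1+j}$, for all $j\geq0$. Hence $a_{\ell+k}=a_\ell$ for all $\ell\geq m+1$, with $k=n-m\geq1$, i.e. $\{a_n\}$ is eventually periodic.

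I do not anticipate a real obstacle, as the analytic core is already contained in Theorem~\ref{finmat} (and, before it, in the boundedness of the relative errors along neat sets). The one point requiring care is the verification that each $f_n(\,\cdot\,,1)$ is a nonzero quadratic — equivalently, that $X_{g_n,\sigma}$ is nonsingular — which is exactly where irreducibility of $az^2+bz+c$ over $K$, i.e. nonvanishing of its discriminant, is used; a degenerate $f_n$ could otherwise fail to constrain $z_{n+1}$.
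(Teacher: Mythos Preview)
Your proof is correct and follows essentially the same route as the paper's: encode the quadratic via the symmetric matrix $X=\mat{a}{b/2}{b/2}{c}\in\M(2,2^{-1}\Gamma)$, apply Theorem~\ref{finmat} to get finiteness of $\{X_{g_n,\sigma}\mid n\in N\}$, then use Lemma~\ref{lem} to trap the $z_{n+1}$ among the roots of finitely many polynomials, and finish with the standard pigeonhole-and-propagate argument in the algorithmic case. Your explicit verification that $A_n=f(p_n,q_n)\neq 0$ (using irreducibility over $K$ and $p_n/q_n\in K$) is a point the paper leaves implicit, so if anything your write-up is slightly more careful here.
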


\begin{proof}
 We shall apply Theorem~\ref{finmat} with $X= \mat{a}{\frac{1}{2} b}{\frac{1}{2} b}{c}$, where $a,b,c$ are the coefficients of the polynomial as in the statement,  and $\sigma $ the identity map. Then the corresponding $\sigma$-form is given by 
 $f(\xi,\eta)=a\xi^2+b\xi\eta+c\eta^2$ for all $\xi, \eta \in \C$.  As the polynomial $az^2+bz+c$ is  irreducible over $K$, it follows that $z\in \C'$. 
 Thus $(z,1)$ is a nontrivial zero of $f$, and the theorem implies, in the notation as before, that $\set{{X_{g_n, Id}} \mid n\in N}$ is a finite set. Let $f_n$ denote the form corresponding to $X_{g_n, Id}.$ Then we get that $\set{f_n(\zeta, 1) \mid n\in N}$ is a finite collection of polynomials in $\zeta$. By Lemma \ref{lem} each $\znp$, $n\in N$, is a root of one of the polynomials from this finite collection. It follows that 
 $\set{\znp \mid n\in N}$ is finite, which proves the first assertion in the corollary. 

 Now suppose that the continued fraction expansion is with respect to an algorithm. As $\set{\znp \mid n\in N}$ is proved to be finite there exist $n_0$ and $k$ in $\N$ such that $z_{n_0+k}=z_{n_0}$. Since the continued fraction expansion follows an algorithm, this implies successively that $a_{n_0+k}=a_{n_0}$, $z_{n_0+1+k}=z_{n_0+1}, \dots$, and hence $a_{n+k}=a_n$ for all $n\geq n_0$, which shows that $\san$ is eventually periodic. 
\end{proof}
 
Corollary \ref{Lagrange} together with Proposition~\ref{neatsubsets} yields the following:

\begin{corollary}\label{Lag-mon}
 Let $z$ be the root of a quadratic polynomial $az^2+bz+c$, with $a,b,c \in \Gamma$, which is irreducible over $K$. 
 Let  $\san_{n\geq 0}$ be the continued fraction expansion of $z$ corresponding to an algorithm $f$. If either $|\qnm|\leq |\qn|$ for all $n\geq 1$, or 
 $\set{z-f(z) \mid z\in \C'}$ is contained in $B(0,r)$ for some $r<1$, then $\san_{n\geq 0}$ is eventually periodic.
\end{corollary}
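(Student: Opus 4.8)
The plan is to deduce Corollary~\ref{Lag-mon} directly from Corollary~\ref{Lagrange} by producing, in each of the two cases, an infinite neat subset for the expansion. This is exactly what Proposition~\ref{neatsubsets} is designed to supply, so the proof amounts to checking that the hypotheses of that proposition are met and then invoking the ``moreover'' clause of Corollary~\ref{Lagrange} (the expansion here is given by an algorithm).

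First I would handle the case $|q_{n-1}|\leq |q_n|$ for all $n\geq 1$. This is precisely condition (i) of Proposition~\ref{neatsubsets}, which yields an infinite neat subset $N\subset\N$ (indeed, combined with the algorithm hypothesis one even gets that all of $\N$ is neat, but an infinite neat subset is all that is needed). Second, for the case where $\{z-f(z)\mid z\in\C'\}\subset B(0,r)$ with $r<1$: I would note that this is essentially condition (iii) of Proposition~\ref{neatsubsets}; strictly, (iii) is stated for $|\zeta-f(\zeta)|<r$ for all $\zeta\in\C$, whereas here the bound is only asserted for $\zeta\in\C'$. One should remark that this causes no difficulty, since the iteration sequence $\{z_n\}$ of any continued fraction expansion of $z\in\C'$ lies entirely in $\C'$ (as observed in the excerpt, each successive $z_n$ is again in $\C'$), and the argument in the proof of Proposition~\ref{neatsubsets}(iii) — namely $|z_{n+1}|=|z_n-f(z_n)|^{-1}>r^{-1}>1$ — only uses the bound at the points $z_n$, all of which are in $\C'$. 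Hence $\liminf_{n\in\N}|z_{n+1}|\geq r^{-1}>1$, condition (ii) of Proposition~\ref{neatsubsets} applies, and an infinite neat subset exists.

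In either case we obtain an infinite neat subset $N$ for the continued fraction expansion of $z$, which is the root of an irreducible quadratic polynomial over $K$ with coefficients in $\Gamma$, and the expansion is associated with the algorithm $f$. Corollary~\ref{Lagrange} then applies and gives that $\{a_n\}$ is eventually periodic, as desired.

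I do not expect any genuine obstacle here — the corollary is a packaging of Corollary~\ref{Lagrange} with Proposition~\ref{neatsubsets}. The only point requiring a word of care is the mismatch between ``$\zeta\in\C$'' in Proposition~\ref{neatsubsets}(iii) and ``$z\in\C'$'' in the statement of Corollary~\ref{Lag-mon}, which is resolved by the observation that the iteration sequence stays in $\C'$, so that one may instead route through condition (ii). Everything else is immediate.
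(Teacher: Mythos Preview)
Your proposal is correct and follows exactly the route the paper indicates: the corollary is stated as an immediate consequence of Corollary~\ref{Lagrange} together with Proposition~\ref{neatsubsets}, and you have spelled out precisely how each hypothesis feeds into that proposition. Your observation about the $\C$ versus $\C'$ discrepancy in condition~(iii) is a nice point the paper glosses over; one small slip is the parenthetical claim that condition~(i) plus the algorithm hypothesis already makes all of $\N$ neat---that requires condition~(iii) as well---but since you only use the existence of an infinite neat subset, this does not affect the argument.
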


It may be noted that the condition of $\set{z-f(z) \mid z\in \C'}$ being contained in $B(0,r)$ for some $r<1$ holds for the nearest integer algorithms and also various other algorithms considered in \cite{DN} and \cite{D-gen}, and hence the above corollary applies in these cases. In the next section we describe certain other algorithms for which $\set{z-f(z) \mid z\in \C'}$ is not contained in $B(0,r)$ for any $r<1$, but the monotonicity condition on the denominator sequence as in the corollary holds.

\subsection{Application to roots of Hermitian quadratic polynomials}

We next apply Theorem \ref{finmat} and prove the following analogue for hermitian quadratic polynomials, viz. functions of the form $H(z,1)$, where $H$ is a Hermitian binary form, showing that if $z\in \C'$ is a root of $H(z,1)$ then, under certain general conditions, the partial quotients $\san$ are bounded; this extends a result of Hines \cite{Hin}, where it is proved in the special case of the nearest integer algorithm. 
We begin by noting the following.

\begin{remark}\label{rem:zero} 
Let $P(z)=az\zb+b\zb+\bar{b}z+c$ where $a,b,c\in \Gamma$ and $a,c\in \R$ be a Hermitian quadratic polynomial. If $a\neq 0$ then the set of roots of $P$ consists of the circle $\abs{z+\frac{b}{a}}^2=(|b|^2-ac)/a$ if the latter is nonnegative (a degenerate circle with a  single point if it is zero), and is empty if it is negative. If $a=0$ and $b\neq 0$ then the set of roots is an affine line in $\C$, containing a dense set of rational points.
\end{remark}

\begin{corollary}\label{hbounded}
 Let  $P(\zeta)=a\zeta \bar{\zeta}+b\bar{\zeta}+\bar{b}\zeta +c$, where $a,b,c\in \Gamma$ with $a, c\in \R$ be a Hermitian quadratic polynomial. Let  $z\in \C'$ be a root of $P$, $\san_{n\geq 0}$ be a continued fraction expansion of $z$, $\szn_{n\geq 0}$ the associated iteration sequence, and $\spn, \sqn$ be the corresponding $\Qp$-pair. 
 Suppose that $\abs{\qn}\leq\abs{\qnp}$ for all $n$. Then at least one of the following holds:
 
 i) there exists an increasing sequence $\set{n_k}$ in $\N$ such that $P(p_{n_k}/q_{n_k})=0$ for all $k$;

 ii) $\san$ is bounded. 

 \noindent In particular if $P$ has no root in $K$, then $\san$ is bounded, and moreover, if $\sup_{n\in \N} |\deln|=M <\infty$, a bound for $ \san$ can be given depending only on $a,b,c$ and $M$. 
\end{corollary}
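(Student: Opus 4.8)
The plan is to apply Theorem~\ref{finmat} to the Hermitian form $H$ associated with the $\sigma$-symmetric matrix $X=\mat{a}{b}{\bar b}{c}$, where $\sigma$ is complex conjugation, so that the $\sigma$-form is exactly $f(\xi,\eta)=a\xi^\sigma\xi + b\xi^\sigma\eta + \bar b\eta^\sigma\xi + c\eta^\sigma\eta = a\bar\xi\xi + b\bar\xi\eta + \bar b\bar\eta\xi + c\bar\eta\eta$, hence $f(z,1)=P(z)$. By hypothesis $z\in\C'$ is a root of $P$, so $(z,1)^t$ is a nontrivial zero of $f$. Since $|q_n|\leq|q_{n+1}|$ for all $n$, Proposition~\ref{neatsubsets}(i) shows that the whole of $\N$ is a neat subset for the expansion, provided $\liminf|\znp|>1$; but actually we only need an \emph{infinite} neat subset, which Proposition~\ref{neatsubsets}(i) guarantees unconditionally from the monotonicity assumption. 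Fix such an infinite neat set $N$. Theorem~\ref{finmat} then gives that $\{X_{g_n,\sigma}\mid n\in N\}$ is a bounded subset of $\M(2,\C)$; write $X_{g_n,\sigma}=\mat{A_n}{B_n}{\bar B_n}{D_n}$. From the proof of that theorem we recall $A_n=f(p_n,q_n)=q_n^2\,P(p_n/q_n)$ and $D_n=f(p_{n-1},q_{n-1})=q_{n-1}^2\,P(p_{n-1}/q_{n-1})$.

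Next I would extract the two alternatives. Suppose alternative (i) fails, i.e.\ there is no increasing sequence $\{n_k\}$ with $P(p_{n_k}/q_{n_k})=0$; equivalently $P(p_n/q_n)\neq 0$ for all sufficiently large $n$. For such $n$ with $n,n-1$ both in this range, $A_n=q_n^2 P(p_n/q_n)$ and $D_n=q_{n-1}^2 P(p_{n-1}/q_{n-1})$ are nonzero. The key point is that $P(p_n/q_n)$, being a nonzero value of a fixed polynomial with coefficients in $k^{-1}\Gamma$-type data, is bounded \emph{below} in absolute value: since $a,b,c\in\Gamma$ and $p_n,q_n\in\Gamma$, the quantity $q_n^{\,\ell}P(p_n/q_n)$ (for the appropriate small power $\ell$ clearing denominators — here $\ell=2$ already suffices as $A_n=a p_n\bar p_n + b p_n\bar q_n+\bar b\bar p_n q_n + c q_n\bar q_n$ lies in the discrete set $\Gamma+\Gamma+\overline{\Gamma}+\Gamma$) is a nonzero element of a discrete subset of $\C$, hence $|A_n|\geq c_0>0$ for a constant $c_0$ depending only on $\Gamma$ (in fact $c_0=1$ when $\Gamma$ is $\G$ or $\E$, since nonzero elements have absolute value $\geq1$). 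The same lower bound applies to $|D_n|$. Combining with the upper bound from Theorem~\ref{finmat}, we get $c_0\leq|A_n|\leq C$ and $c_0\leq|D_n|\leq C$ for all large $n\in N$.

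Now I would turn the bounds on $A_n,D_n$ into a bound on $z_{n+1}$, hence on $a_{n+1}$. By Lemma~\ref{lem}, $z_{n+1}$ is a root of the form $f_n$ corresponding to $X_{g_n,\sigma}$, i.e.\ $A_n z_{n+1}\bar z_{n+1} + B_n z_{n+1}\bar{} + \bar B_n\,\overline{z_{n+1}} + D_n = 0$; since $|A_n|$ is bounded below and $|A_n|,|B_n|,|D_n|$ are bounded above, the roots of this Hermitian polynomial lie on a circle (or point) of controlled radius centred at a point $-\bar B_n/A_n$ of controlled modulus. Hence $\{z_{n+1}\mid n\in N,\ n \text{ large}\}$ is a bounded set, say contained in $B(0,R)$ with $R$ depending only on $a,b,c$ and the bound $C$, which by Remark~\ref{thebound} depends only on $a,b,c$ and $M:=\sup_{n\in N}|\delta_n|$. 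Since $a_{n+1}=f(z_{n+1})$ with $|z_{n+1}-a_{n+1}|<1$ — wait, more carefully: for $n\in N$ we control $z_{n+1}$, but we want \emph{all} $a_n$. Here I would invoke Proposition~\ref{theta1}(ii): since $|q_{n-1}|\leq|q_n|$ for all $n$, we have $|\delta_{n-1}|\leq|\delta_n|+1$ for all $n\geq1$, so a bound on $\{|\delta_n|: n\in N\}$ together with the gaps being controlled — actually the cleaner route is that $|q_n|\leq|q_{n+1}|$ for \emph{all} $n$ means we may take $N=\N$ itself once we also know $\liminf|\znp|>1$; but without that, one argues that between consecutive elements of $N$ the indices are bounded, OR, better: use that $|\delta_n|=|z_{n+1}+q_{n-1}/q_n|^{-1}$ with $|q_{n-1}/q_n|\le 1$, and once $|\delta_n|\le M$ for $n$ in an infinite set we can propagate. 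The honest statement is: the monotonicity hypothesis forces $N=\N$ to be neat \emph{as soon as} $\liminf_{n}|z_{n+1}|>1$, and if instead $\liminf|z_{n+1}|=1$ then... I would handle this by noting $|a_n|\le|z_n-a_n| + |z_n| < 1 + |z_n|$ and $|z_n|=|z_{n-1}-a_{n-1}|^{-1}$, so a \emph{lower} bound on infinitely many $|z_n|$ plus finiteness of the relevant $z$-values on $N$ does the job after checking $\N\setminus N$ is finite — which follows because $|q_n|$ is nondecreasing, hence $N=\{n:|q_{n-1}|\le|q_n|\}=\N$. That resolves it cleanly: $N=\N$, so $\{z_{n+1}\mid n\in\N\}\subseteq B(0,R)$, and then $|a_n|=|f(z_n)|\le|z_n|+|z_n-f(z_n)|<R+1$ for all $n\geq1$, giving the explicit bound depending only on $a,b,c,M$.

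The main obstacle, and the step deserving the most care, is the dichotomy argument and the lower bound on $|A_n|$: one must argue that \emph{if} alternative (i) fails then the \emph{nonzero} integer-like values $A_n=f(p_n,q_n)$ are bounded away from $0$, using discreteness of the ring, and simultaneously that the radius/centre estimate for the Hermitian polynomial $f_n(\zeta,1)=0$ genuinely yields a bounded set of $\zeta$'s — this requires $|A_n|$ bounded below, which is exactly where failure of (i) is used. A secondary subtlety is confirming $N=\N$ from the global monotonicity so that the bound on $a_n$ is for \emph{all} $n$ and not just $n\in N$; I would state this explicitly. Finally, the "in particular" clause: if $P$ has no root in $K$ then $p_n/q_n\in K$ can never be a root, so alternative (i) is automatically impossible, forcing (ii), and the quantitative bound then follows from Remark~\ref{thebound} tracking the dependence through $M=\sup_n|\delta_n|$.
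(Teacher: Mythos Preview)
Your setup is correct: the choice of $X=\mat{a}{b}{\bar b}{c}$ with $\sigma$ the conjugation map, the identification $A_n=f(p_n,q_n)$, the appeal to Lemma~\ref{lem}, and the observation that when $A_n\neq 0$ it lies in the discrete set $\Gamma\cap\R=\Z$ so $|A_n|\geq 1$, are all sound. This last point is a legitimate (and slightly more explicit) alternative to the paper's use of the \emph{finiteness} clause of Theorem~\ref{finmat} to obtain only finitely many circles.

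There is, however, a genuine gap in the step where you pass from ``$\{z_{n+1}\mid n\in N\}$ bounded'' to ``$\{a_n\mid n\in\N\}$ bounded''. Your final resolution is the claim that $N=\{n:|q_{n-1}|\le|q_n|\}=\N$, and hence $\N$ itself is neat. This is false: the definition of a neat subset requires \emph{both} $|q_{n-1}|\le|q_n|$ for $n\in N$ \emph{and} $\liminf_{n\in N}|z_{n+1}|>1$. The monotonicity hypothesis gives only the first condition; nothing in the hypotheses of the corollary excludes $\liminf_n|z_{n+1}|=1$, so $\N$ need not be neat. Your other attempts (propagating via Proposition~\ref{theta1}(ii), or bounding gaps between consecutive elements of $N$) are not carried through, and in fact the backward inequality $|\delta_{n-1}|\le|\delta_n|+1$ cannot by itself control forward gaps.

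The paper's remedy is simply to \emph{choose} the neat set so that its complement is already under control: fix any $\lambda>1$ and set $N=\{n\geq n':|z_{n+1}|>\lambda\}$, where $n'$ is chosen so that $A_n=f(p_n,q_n)\neq 0$ for $n\geq n'$ (this uses the failure of alternative~(i)). This $N$ is neat because the monotonicity hypothesis supplies the first condition globally and $|z_{n+1}|>\lambda$ supplies the second. For $n\in N$ one bounds $|z_{n+1}|$ via the (finitely many) circles, exactly as you describe; for $n\notin N$ with $n\geq n'$ one has $|z_{n+1}|\le\lambda$ by definition of $N$. Either way $\{z_n\}$ is bounded, and $|a_n|\le|z_n|+1$ finishes the argument. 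Note that this is precisely the shape of the neat set produced in the proof of Proposition~\ref{neatsubsets}(i); had you tracked that construction explicitly rather than taking an abstract neat set, the gap would have closed itself.
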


\begin{proof}
 Let $X= \mat{a}{b}{\bar{b}}{c}$, where $a,b,c$ are the coefficients of $P$ as in the hypothesis. Let $\sigma$ denote the complex conjugation map. Then $X$ is $\sigma$-symmetric and the corresponding form is given by $f(\xi,\eta)=a\xib \xi+b\xib \eta+\bar b\etab \xi+c\etab \eta$, for all $\xi, \eta \in \C$.  
 Thus  $(z,1)$ is a zero of $f$. Let $\set{g_n}$ be the sequence of matrices associated with the continued fraction expansion as in the hypothesis. 
 Consider any neat infinite subset $N$ for the expansion; by Proposition~\ref{neatsubsets}\,(i) such subsets exists.  Then, in the notation as before, by Theorem \ref{finmat} we get  that $\set{{X_{g_n, \sigma}} \mid n\in N}$ is a finite collection of matrices. For $n\in \N$, let $f_n$ denote the $\sigma$-form corresponding to $X_{g_n, \sigma}$ and let $\varphi_n$ be the Hermitian polynomial defined by $\varphi_n(\zeta)=f_n(\zeta ,1)$ for all $\zeta \in \C$. 
 Then $\set{\varphi_n\mid n\in N}$ is a finite collection of polynomials in $\zeta$, and  by Lemma \ref{lem} each $\znp$, $n\in N$, is a root of $\varphi_n$. We shall use this observation to prove the assertion in the Corollary. 

 Suppose that statement (i) does not hold. Then there exists $n'\in \N$ such that for $n\geq n'$, $P(\pn/\qn)\neq 0$. We fix $\lambda >1$ and put $N=\set{n\in\N \mid n\geq n' \hbox{ \rm and }|\znp|>\lambda}$. Since by assumption $\set{|\qn|}$ is monotonic, it follows that $N$ is a neat subset for the expansion, and in the notation as above $\set{f_n}_{n\in N}$ is a finite collection. 
 As recalled in the proof of Theorem~\ref{finmat}, in the expression for $f_n$ in terms of $\zeta$ and $\bar{\zeta}$, the coefficient of $z\zb$ is $f(\pn,\qn)$. Hence for $n\geq n'$ the leading coefficient of $\varphi_n$ is nonzero and therefore by Remark~\ref{rem:zero}, the set of roots of $\varphi_n$, if nonempty, consists of a circle. Since any $\znp$, $n\in N$ is a root of $\varphi_n$ and $\set{\varphi_n\mid n\in N}$ is a finite collection, it follows that $\{\znp\mid n\in N\}$ is contained in a union of finitely many circles in $\C$ and hence form a bounded subset of $\C$. On the other hand, for $\znp$, for $n\notin N$, by the definition of $N$ we have either $n\leq n'$ or $|\znp|\leq \lambda$. Altogether, we get that $\set{\zn\mid n\in \N}$ is bounded. Since for all $n$, $|\an|\leq |\zn|+1$ it follows that 
 $\san$ is bounded in $\C$,  which  shows that  statement\,(ii) holds; this proves the first assertion in the Corollary. 

 Finally suppose that $\sup_{n\in \N}|\deln| =M<\infty$. We recall that for $n\in N$, $\znp$ is contained in the circle defined by $\varphi_n(\zeta)=0$, and hence $|\znp|$ admits a bound depending only on the coefficients of $f_n$, namely the entries of $X_{g_n, \sigma}$. Hence by Remark~\ref{thebound}, $\set{\znp \mid n\in N}$ admits a bound depending only on $a,b,c$ and $M$, and hence so does $\set{\zn \mid n\geq 0}$, as $\lambda $ is an absolute constant. Since $|\an|\leq |\zn|+1$ for all $n$, this proves the last statement in the Corollary. 
\end{proof}

Corollary~\ref{hbounded} generalizes a result of Hines \cite{Hin} proved in the special case when $\Gamma$ is the ring of Gaussian integers, the continued fraction expansion is with respect to the nearest integer algorithm, and $P$ has no root in $K$. The partial quotients in the continued fraction expansion of a $z\in \C'$ being bounded relates to $z$ being ``badly approximable" by elements of $K$. This connection was discussed in \cite{Hin} in the case of expansions with respect to the nearest integer algorithm, for the ring  of Gaussian integers. In the next section we discuss the approximability issues in the broader context of the more general continued fraction expansions as considered in the earlier sections here, and relate the property of the partial quotients being bounded to bad approximability, under certain general conditions on the expansions.

\section{Approximation properties of convergents}

It is well-known that for the classical simple continued fraction expansions of real numbers, the convergents are the ``best approximants", in the sense that if $\pn/\qn$ is a convergent in the expansion of $x\in \R$ then for any $1\leq q\leq \qn$, $\abs{x-\frac{\pn}{\qn}}\leq \abs{x-\frac{p}{q}}$ (with equality only for $q=\qn$). Thus the ``quality" of approximation (namely how well they approximate the number in question compared to other rationals) is an issue in the study of continued fraction expansions. No strict equivalent of the above is known for any algorithmic expansions of complex numbers (see however \cite{Lev} for exploration on this question in another direction). There are however weaker versions of the result, proving the analogous relation upto a constant multiple (see \cite{Hin} and other references cited there). In this respect it turns out to be more fruitful to compare $\abs{qz-p}$ with $\abs{\qn z-\pn}$, where $p,q\in \Gamma$ (the latter being a Euclidean subring of $\C$ as before), $q \neq 0$ and an appropriate convergent of $z$, in place of $\abs{z-\frac{p}{q}}$ and $\abs{z-\frac{\pn}{\qn}}$; we note that in the setting as above, if $\abs{qz-p} \geq c\,\abs{\qn z-\pn}$, for some $c>0$,  then $$\abs{z-\frac{p}{q}}=\abs{q}\inv\abs{qz-p}\geq c\,\abs{q}\inv\abs{\qn z-\pn}\geq c\,\abs{\qn}\inv\abs{\qn z-\pn}=c\,\abs{z-\frac{\pn}{\qn}}.$$

\subsection{A general result on approximations}

The following proposition generalizes the result on comparison of $|qz-p|$ and $|\qn z-\pn|$ known earlier (see \cite{Hin}) in the case of continued fraction expansions corresponding to the nearest integer algorithms; along the way the constant factor involved is also clarified. 

Let $\Gamma$ be a Euclidean subring of $\C$, $K$ be the quotient field of $\Gamma$, and $\C'=\C\backslash K$. We denote by $\nu (\Gamma)$ the number $\min \set{|\gamma| \mid \gamma \in \Gamma, |\gamma|>1}$; as $\Gamma$ is discrete, it follows that $\nu (\Gamma)>1$. 

\begin{proposition}\label{app-pr} 
 Let $z\in \C'$ and $\san$ be a continued fraction expansion of $z$, $\szn$ be the corresponding iteration sequence, $\spn,\sqn$ the corresponding $\Qp$-pair and $\sdeln$ be the sequence of relative errors. Let $n\in \N$ be such that $|\deln|<\nu (\Gamma)$ and let 
 $\theta_n= \max\set{\frac{|\deln|}{\nu (\Gamma)},|\znp|\inv}$. 
 Then for all $q\in \Gamma$ such that $|\qnm|< |q| \leq \abs{\qn}$ and all $p\in \Gamma$, we have $|qz-p| \geq (1-\theta_n) |\qn z-\pn|.$
\end{proposition}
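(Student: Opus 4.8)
The plan is to reduce everything to the single algebraic identity from Proposition~\ref{lazy1}\eqref{lazy1.1}, which says $q_m z - p_m = (-1)^m (z_1\cdots z_{m+1})^{-1}$, together with the relation between two successive terms $q_n z - p_n$ and $q_{n-1} z - p_{n-1}$. First I would fix $p, q \in \Gamma$ with $|q_{n-1}| < |q| \le |q_n|$. The key opening move is to express the pair $(p,q)$ in the basis of the $\mathcal{Q}$-pair: since $\det g_n = p_n q_{n-1} - p_{n-1} q_n = \pm 1$, the matrix $g_n \in \GL(2,\Z)$ is invertible over $\Z$, so there are \emph{integers} $u, v$ with $q = u q_n + v q_{n-1}$ and $p = u p_n + v p_{n-1}$. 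Wait — that is only true if $p, q$ themselves lie in $\Gamma$ and $g_n$ is over $\Z$; more carefully, $g_n^{-1}(p,q)^t$ has entries in $\Gamma$ because $g_n^{-1}$ has integer entries and $p,q \in \Gamma$. So $u, v \in \Gamma$. Then linearity gives
$$qz - p = u(q_n z - p_n) + v(q_{n-1} z - p_{n-1}).$$

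Next I would analyze the coefficient $u$. If $u = 0$ then $q = v q_{n-1}$ with $v \in \Gamma$, and $|q| = |v||q_{n-1}| \le |q_{n-1}|$ forces $|v| \le 1$; but $v \ne 0$ (else $q = 0$), so $v$ is a unit and $|q| = |q_{n-1}|$, contradicting $|q_{n-1}| < |q|$. Hence $u \ne 0$, so $|u| \ge 1$. Now divide through by $q_n z - p_n$ and use that, by Proposition~\ref{lazy1}\eqref{lazy1.1}, the ratio $(q_{n-1}z - p_{n-1})/(q_n z - p_n) = -\,(z_1\cdots z_n)^{-1}/(z_1\cdots z_{n+1})^{-1} \cdot (\pm 1) = \pm z_{n+1}$ up to sign — so this ratio has modulus $|z_{n+1}|$. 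Therefore
$$\left| \frac{qz - p}{q_n z - p_n} \right| \ge |u| - |v|\,|z_{n+1}|^{-1} \ge 1 - |v|\,|z_{n+1}|^{-1},$$
using $|u| \ge 1$ and the reverse triangle inequality. So I need an upper bound on $|v|$, and this is where $\theta_n$ and the hypothesis $|\delta_n| < \nu(\Gamma)$ enter, and it is the main obstacle.

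To bound $|v|$: from $q = u q_n + v q_{n-1}$ and $|q| \le |q_n|$ we get $|v q_{n-1}| \le |q| + |u q_n| \le (1 + |u|)|q_n|$, which is useless as it stands because $|u|$ is not yet bounded. The trick must instead be to bound $|v|$ relative to $|u|$ using the \emph{small size} of $q_{n-1}z - p_{n-1}$ versus the denominator growth — i.e. to revisit the bound $|q| \le |q_n|$ written via relative errors. Recall $q_n z - p_n = \delta_n / q_n$, so $q_n = \delta_n/(q_n z - p_n)$ type relations will convert the condition into a statement about $u$ and $v$; concretely, writing $q = u q_n + v q_{n-1}$ and dividing by $q_n$, the constraint $|q/q_n| \le 1$ becomes $|u + v\,(q_{n-1}/q_n)| \le 1$, and since by Proposition~\ref{lazy1}\eqref{lazy1.4}-type manipulation $|q_{n-1}/q_n| \le 1$ in the neat regime we still only get $|u| \le 1 + |v|$. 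The genuinely new input is that $|v| \ne 0$ would be needed to be the dominant term, and then $|v| \ge 1$ forces (by discreteness) $|v| \ge \nu(\Gamma)$ unless $v$ is a unit; combined with $|q| \le |q_n|$ and the relative-error bound $|\delta_n| < \nu(\Gamma)$ one shows $v$ \emph{must} be zero or a unit of modulus $1$, and when it is a unit the term $|v|\,|z_{n+1}|^{-1} = |z_{n+1}|^{-1} \le \theta_n$; when the $|\delta_n|/\nu(\Gamma)$ branch of $\theta_n$ is the relevant one, one instead estimates $|v|$ directly by $|\delta_n|/\nu(\Gamma)$ via the size constraint on $q$. Either way the bound $|qz-p|/|q_nz-p_n| \ge 1 - \theta_n$ drops out. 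The delicate part — the one I would spend the most care on — is the case analysis showing that under $|\delta_n| < \nu(\Gamma)$ the coefficient $v$ cannot be "large", since that is precisely what prevents the $v$-term from overwhelming the $|u| \ge 1$ lower bound; this is the crux where the hypothesis on $\delta_n$ and the definition of $\nu(\Gamma)$ are indispensable.
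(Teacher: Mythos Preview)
Your opening matches the paper exactly: write $p=u p_n+v p_{n-1}$, $q=u q_n+v q_{n-1}$ with $u,v\in\Gamma$ via $g_n^{-1}$, observe $u\neq 0$, and pass to $qz-p=u(q_nz-p_n)+v(q_{n-1}z-p_{n-1})$. But two things go wrong after that. First, a slip: you correctly compute that $(q_{n-1}z-p_{n-1})/(q_nz-p_n)$ has modulus $|z_{n+1}|$, yet in the very next line you write the lower bound as $|u|-|v|\,|z_{n+1}|^{-1}$. Dividing by $q_nz-p_n$ gives $u-v\,z_{n+1}$ (up to sign), so the reverse triangle inequality produces $||u|-|v|\,|z_{n+1}||$, not what you wrote. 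This matters: with $|z_{n+1}|>1$ multiplying $|v|$, you cannot make the $v$--term small just by bounding $|v|$ above.

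Second, and more seriously, your plan for the case $|v|>1$ is off target. You aim to bound $|v|$ \emph{from above} using the size constraint $|q|\le|q_n|$ and then feed that into the linear decomposition; the paper does the opposite. It abandons the decomposition $u(q_nz-p_n)+v(q_{n-1}z-p_{n-1})$ in this case and instead uses the identity $v=\pm(p_nq-q_np)$, i.e.\ $|p_n/q_n-p/q|=|v|/(|q|\,|q_n|)$, together with $|z-p_n/q_n|=|\delta_n|/|q_n|^2$. Writing $|qz-p|=|q|\,\big|(p_n/q_n-p/q)+(z-p_n/q_n)\big|$ and using $|q|\le|q_n|$ yields $|qz-p|\ge\big(|v|\,|\delta_n|^{-1}-1\big)|q_nz-p_n|$. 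Now the \emph{lower} bound $|v|\ge\nu(\Gamma)$ (since $|v|>1$) together with $|\delta_n|<\nu(\Gamma)$ gives $\nu(\Gamma)|\delta_n|^{-1}-1>1-|\delta_n|/\nu(\Gamma)\ge 1-\theta_n$. So the crux is that $|v|$ is bounded \emph{below} by $\nu(\Gamma)$ and enters through $|p/q-p_n/q_n|$, not that $|v|$ is forced small. Only in the remaining case $|v|=1$ does the paper revert to the linear decomposition and the $|z_{n+1}|^{-1}$ branch of $\theta_n$.
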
 

\begin{proof}
 Let $p,q\in \Gamma$, $q\neq 0$, such that $|\qnm|<|q|\leq\abs{\qn}$ be given. Since $\pn\qnm-\pnm\qn= (-1)^{n+1}=\pm 1$, there exist (uniquely determined) $\varphi , \psi \in \Gamma$ such that $p=\varphi\pn+\psi\pnm$ and $q=\varphi\qn+\psi\qnm$. We note that since $|\qnm|<|q|$, $\varphi \neq 0$, and that  if $\psi =0$ then $p=\pn$ and $q=\qn$, so the desired assertion holds. We may therefore assume that $|\varphi|, |\psi|\geq 1$. 

 By inverting the defining (linear) relations for $\varphi, \psi$ we see that $\psi = (-1)^{n+1}(\pn q-\qn p)$, so $\abs{\frac{\pn}{\qn}-\frac{p}{q}}=\abs{\frac{\psi}{q\qn}}.$ Hence we have 
 $$\abs{qz-p}=|q|\Abs{\left(\frac{\pn}{\qn}-\frac{p}{q}\right)+ \left(z-\frac{\pn}{\qn}\right)}
 \geq |q|\Abs{|\frac{\psi}{q\qn}|-\abs{z-\frac{\pn}{\qn}}} 
 = \Abs{|\frac{\psi}{q\qn}|-|\frac{q}{\qn}|\,|\qn z-\pn|}.$$
 Recalling that $\deln=\qn(\qn z-\pn)$ we have $|\qn\inv\psi|=|\psi\deln\inv||\qn z-\pn|.$  
 Then, as $|q| \leq |\qn|$, the preceding observation shows that 
 $$|qz-p|\geq (|\psi| |\deln|\inv -1)|\qn z-\pn|;$$ we recall here that by hypothesis 
 $|\psi| |\deln|\inv >1$. Now suppose first that $|\psi|>1$. Then $|\psi|\geq \nu (\Gamma) $, and we have  
 $|qz-p|\geq (\nu (\Gamma) |\deln|\inv-1) |\qn z-\pn|$. Noting that for any $t>1$, $t-1>1-t\inv$, we have $\nu(\Gamma) |\deln|\inv -1>1-\nu (\Gamma)\inv |\deln|$ and hence 
 $$|qz-p|\geq \left(1-\frac{|\deln|}{\nu (\Gamma)}\right) |\qn z-\pn|\geq (1-\theta_n)|\qn z-\pn|,$$ 
 as sought to be proved. 
 Now suppose $|\psi |=1$. Then substituting for $p$ and $q$ we get 
 $$qz-p= (\varphi\qn+\psi\pn)z-(\varphi\qnm+\psi\pnm)= \varphi (\qn z-\pn) + 
 \psi (\qnm z-\pnm).$$  Hence, using Proposition~\ref{lazy1}\eqref{lazy1.1} we get that 
 $$\Abs{\frac{qz-p}{\qn z-\pn}}= \Abs{\varphi+ \psi\,\frac{(-1)^{n-1}z_1 \cdots \zn}{(-1)^{n}z_1 \cdots \znp}}=\abs{\varphi -\psi \znp\inv}\geq 1-\theta_n,$$ 
 since $|\varphi|\geq 1$, $|\psi|=1$, and  $|\znp\inv| \leq \theta_n$.  Thus $|qz-p|\geq (1-\theta_n) |\qn z-\pn|$, in this case as well. 
\end{proof}

\begin{remark}\label{rem:nearest}
In the case of the nearest integer algorithms on the Euclidean subrings the values of the infima of $\inf |\deln|\inv$ over the expansions of all complex numbers have been determined in \cite{Lak}, and using them it is noted in \cite{Hin} that $\sup |\deln|$ are bounded uniformly, independent of the point $z\in \C'$, by a constant less than $\nu (\Gamma)$. It is also easy to see for these expansions that there exists a $c>1$, independent of $z\in \C'$,
such that $\inf |\zn|>c$ for all $n$. Hence in these cases the conclusion as in Proposition~\ref{app-pr} holds with fixed positive constant in place of $1-\theta_n$, independent of both $n$ and the point $z$.   
\end{remark}

\begin{remark}\label{rem:badapp}
In the setting of Corollary~\ref{badapp}, for $n$ such that $|\znp|>1+\nu(\Gamma)\inv$ and $|\qnm|\leq |\qn|$  we automatically have (see Proposition~\ref{theta1}\,(i)) 
$|\deln| =\abs{\znp+\frac{\qnm}{\qn}}\inv <\nu (\Gamma)$.
Expansions satisfying the stronger condition uniformly for all $n$, namely such that $\inf_{n\in \N}|\znp|>1+\nu(\Gamma)\inv$, would not be possible for a general $z\in \C'$ when $\nu (\Gamma)=\sqrt{2}$, as can be easily verified. On the other hand when $\nu (\Gamma)=\sqrt{3}$, it is possible to find expansions with the property, for all $z\in \C'$; in particular the condition holds for expansions with respect to the nearest integer algorithms of the Eisenstein ring $\E$ and the ring $\Z[\frac{1}{2}(1+\sqrt{11})]$. The continued fraction expansions corresponding to the class of $\E$-valued algorithms described in Example~\ref{Eisen-exampl}, in the next section, also have this property, and hence conclusion as in Proposition~\ref{app-pr} holds for the corresponding expansions, with a uniform choice for $\theta_n$. 
\end{remark}

\subsection{Badly approximable numbers}

We recall that a complex number $z$ is said to be {\it badly approximable}, with respect to a discrete subring $\Gamma$, if there exists a $\delta >0$ such that $|z-\frac{p}{q}|\geq \delta /|q^2|$, for all $p, q\in \Gamma$, $q\neq 0$. Clearly if $z$ is badly approximable then $z\notin K$. Consider a badly approximable number $z$ and let 
$\san$ be a continued fraction expansion of $z$, $\szn$ the corresponding iteration sequence and $\spn,\sqn$ be the corresponding $\Qp$ pair, and suppose that $|\qn/\qnp|\leq 1$ for all $n$. Then by Proposition~\ref{lazy1}\eqref{lazy1.4} we have $\abs{\znp +\frac{\qnm}{\qn}}\inv =\abs{\qn}^2\abs{z-\frac{\pn}{\qn}}\geq \delta$, for some $\delta>0$, for all $n$, as $z$ is badly approximable. Hence for all $n\geq 0$ we have $|\znp |\leq |\frac{\qnm}{\qn}|+ \delta\inv \leq 1+\delta\inv$, and $|\anp| \leq |\znp|+1\leq 2+\delta\inv$. Thus we see that for a badly approximable $z$ and any continued fraction expansion $\san$ such that the corresponding sequence $\set{\abs{\qn}}$ is monotonic, the partial quotients $|\an|$ are bounded. The converse statement, that if the partial quotients are bounded then the number is badly approximable, is known to hold in the case of the expansions corresponding to the nearest integer algorithms on the Euclidean rings (see \cite{Hin}).   

The following Corollary generalizes this property. Recall that for a Euclidean subring $\Gamma$,  $\nu (\Gamma )$ denotes $\inf \set{|\gamma| \mid \gamma\in\Gamma, |\gamma|>1}$; 
thus $\nu (\Gamma)=\sqrt{2}$ when $\Gamma$ is $\G=\Z [i], \Z [i\sqrt{2}]$ or 
$\Z [\frac{1}{2}(1+i\sqrt{7})]$ and $\sqrt{3}$ for $\Z[\frac{1}{2} (1+i\sqrt{3})]$ or 
$\Z[\frac{1}{2} (1+i\sqrt{11})]$.

\begin{corollary}\label{badapp} 
 Let $\Gamma$ be a Euclidean subring of $\C$ and $K$ be the quotient field of $\Gamma$. Let $z\in \C'=\C\backslash K$ and $\san$ be a continued fraction expansion of $z$, 
 such that the corresponding sequence $\set{\abs{\qn}}$ is monotonic. Let $\szn$ be the corresponding iteration sequence and $\sdeln$ be the sequence of relative errors. Suppose that $\inf |\zn|>1$, and that ($\sdeln$ is bounded and)  $\limsup |\deln| < 
 \nu (\Gamma)$. Then $z$ is badly approximable with respect to $\Gamma$ if and only if $\san$ is bounded. 
\end{corollary}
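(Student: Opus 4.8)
The plan is to prove the two implications separately, using the monotonicity of $\{|q_n|\}$ to make the whole of $\N$ a neat subset, and then invoking the quantitative approximation estimate of Proposition~\ref{app-pr}. First note that by hypothesis $\{|q_n|\}$ is monotonic and $|q_n|\to\infty$ (Proposition~\ref{qntoinfinity}), so $|q_{n-1}|\le|q_n|$ for all $n\ge 1$; combined with $\inf|z_n|>1$ this shows $\N$ itself is a neat subset for the expansion, and in particular $\{\delta_n\}$ is bounded by Proposition~\ref{theta2}. For the forward implication (badly approximable $\Rightarrow$ $\{a_n\}$ bounded), I would simply reproduce the short computation already given in the paragraph preceding the corollary: if $z$ is badly approximable with constant $\delta>0$, then by Proposition~\ref{lazy1}\ref{lazy1.4} we get $|z_{n+1}+q_{n-1}/q_n|^{-1}=|q_n|^2|z-p_n/q_n|\ge\delta$, hence $|z_{n+1}|\le|q_{n-1}/q_n|+\delta^{-1}\le 1+\delta^{-1}$, and therefore $|a_n|\le|z_n|+1\le 2+\delta^{-1}$ for all $n$. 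This direction uses only monotonicity, not the hypothesis $\limsup|\delta_n|<\nu(\Gamma)$.

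For the converse, suppose $\{a_n\}$ is bounded, say $|a_n|\le C$ for all $n$; I must produce $\delta>0$ with $|z-p/q|\ge\delta/|q|^2$ for all $p,q\in\Gamma$, $q\ne 0$. Given such $q$, since $|q_n|\to\infty$ and $|q_0|=1\le|q|$ there is a unique $n$ with $|q_{n-1}|<|q|\le|q_n|$ (using $|q_{-1}|=0<|q|$ to handle small $q$). Because $\limsup|\delta_n|<\nu(\Gamma)$, for all sufficiently large $n$ we have $|\delta_n|<\nu(\Gamma)$, so Proposition~\ref{app-pr} applies with $\theta_n=\max\{|\delta_n|/\nu(\Gamma),\,|z_{n+1}|^{-1}\}$, giving $|qz-p|\ge(1-\theta_n)|q_nz-p_n|$; the finitely many remaining small indices $n$ can be absorbed into the final constant. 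Next I bound $1-\theta_n$ away from $0$ uniformly: since $\inf|z_n|>1$ we have $|z_{n+1}|^{-1}<\inf_m|z_m|^{-1}<1$ uniformly, and since $\limsup|\delta_n|<\nu(\Gamma)$ there is $\rho<1$ with $|\delta_n|/\nu(\Gamma)\le\rho$ for all large $n$, so $1-\theta_n\ge 1-\max\{\rho,\,(\inf|z_m|)^{-1}\}=:c_0>0$. Finally I convert $|q_nz-p_n|$ into a bound of the form $\text{const}/|q|^2$: by Proposition~\ref{lazy1}\ref{lazy1.4}, $|q_nz-p_n|=|q_n|^{-1}|z_{n+1}+q_{n-1}/q_n|^{-1}$, and $|z_{n+1}+q_{n-1}/q_n|\le|z_{n+1}|+1=|a_{n+1}|+|z_{n+2}|^{-1}\le C+1$ (using $z_{n+1}=a_{n+1}+z_{n+2}^{-1}$ and $|z_{n+2}|>1$), so $|q_nz-p_n|\ge|q_n|^{-1}(C+1)^{-1}$; together with $|q|\le|q_n|$ and the recursion $|q_n|\le(|a_n|+1)|q_{n-1}|\le(C+1)|q_{n-1}|<(C+1)|q|$ this yields $|q_nz-p_n|\ge(C+1)^{-2}|q|^{-1}$, whence $|z-p/q|=|q|^{-1}|qz-p|\ge c_0(C+1)^{-2}|q|^{-2}$. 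Taking $\delta$ to be $c_0(C+1)^{-2}$ shrunk to also dominate the contribution of the finitely many excluded indices $n$ finishes the proof.

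The main obstacle is the bookkeeping in the converse direction: ensuring the constant $\delta$ is genuinely uniform over all $q$, which requires (a) handling the finitely many indices $n$ for which $|\delta_n|\ge\nu(\Gamma)$ might fail — these correspond to only finitely many ``scales'' $|q|$, each contributing a positive lower bound since $z\in\C'$ forces $|qz-p|>0$, so their minimum is a positive number that can be folded into $\delta$ — and (b) the two-sided comparison $|q|\le|q_n|<(C+1)|q|$ that lets me replace $|q_n|^{-2}$ by a constant times $|q|^{-2}$. Everything else is a direct combination of Propositions~\ref{lazy1}, \ref{qntoinfinity}, \ref{theta2}, and~\ref{app-pr}; no new idea beyond organizing these is needed.
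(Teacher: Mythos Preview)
Your proof is correct and follows essentially the same route as the paper's: both directions are handled identically, and the converse proceeds via Proposition~\ref{app-pr} together with the bound $|q_n z - p_n| \ge \mathrm{const}\cdot|q_n|^{-1}$ coming from Proposition~\ref{lazy1}\eqref{lazy1.4}, plus the comparison $|q_n| \le (|a_n|+1)|q_{n-1}|$. One small slip: in your chain $|z_{n+1}+q_{n-1}/q_n|\le|z_{n+1}|+1=|a_{n+1}|+|z_{n+2}|^{-1}\le C+1$, the ``$=$'' should be ``$\le$'' and the $+1$ has been dropped; the correct bound is $|z_{n+1}|+1 \le |a_{n+1}|+2 \le C+2$ (the paper uses exactly this), which only changes the final constant.
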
 

\begin{proof}
 We have already observed that if $z$ is badly approximable then $\san$ is bounded. Now suppose that $\san$ is bounded. As before let $\szn$ denote the corresponding iteration sequence and $\spn, \sqn$ the corresponding $\Qp$-pair. By Proposition~\ref{lazy1}\eqref{lazy1.4} for all $n$ we have $\abs{z-\frac{\pn}{\qn}} = \abs{\qn}^{-2}\abs{\znp +\frac{\qnm}{\qn}}\inv$. Since by hypothesis  $|\qnm/\qn|\leq 1$, we get that $\abs{\znp +\frac{\qnm}{\qn}}\leq \abs{\znp}+1 \leq \abs{\anp}+2$, and hence $\abs{z-\frac{\pn}{\qn}}\leq \abs{\qn}^{-2}\abs{\anp+2}$ for all $n$. Since by assumption $\san$ is bounded, this implies that there exists $\delta >0$ such that  $\abs{z-\frac{\pn}{\qn}} \geq \delta'/\abs{\qn}^2$ for all $n$; thus the desired statement holds for $p/q$ of the form $\pn/\qn$ for some $n$; it may be noted that this does not involve the condition on $\sdeln$ as in the hypothesis. We shall now deduce the general statement from this using Proposition~\ref{app-pr}. 

 As  $\limsup |\deln|<\nu(\Gamma)$, by hypothesis, there exists $n_0\in \N$ such that 
 $$\nu (\Gamma)\inv\sup_{n\geq n_0}|\deln| <1<\inf |\zn|.$$ 
 Hence there exists $\lambda \in (0,1)$ such that $|\zn|>\lambda\inv$ and $|\deln|<\lambda\, 
 \nu (\Gamma)$ for all $n\geq n_0$. Then for $n\geq n_0$, $\theta_n$ as in  Proposition~\ref{app-pr} is less than $\lambda$ and hence by the proposition we have $|qz-p|\geq (1-\lambda) |\qn z-p_n|$, for all  $p,q\in \Gamma$, $q\neq 0$ such that $|\qnm|<|q|\leq |\qn|$, with  $n\geq n_0$. Thus for these $p,q$, 
 setting $\beta=(1-\lambda)\inv$ we get $|\qn z-\pn|\leq \beta |qz-p|$, and hence 
 $$|\qn|\,|\qn z-\pn|\leq \beta |qz-p|\,|\qn|
 \leq \beta |qz-p|\,|q|\,|\qn/\qnm|. $$  
 From the recurrence relations for $\sqn$, and the monotonicity assumption on $\set{\abs{\qn}}$ we have $|\qn/\qnm|\leq |\an|+1$ for all $n$. Hence 
 for all $n\geq n_0$, we get $|q|\,|qz-p|\geq |\qn|\,|\qn z-\pn|/\beta (|\an|+1)$. 
 As noted above there exists $\delta'>0$ such that $|z-\frac{\pn}{\qn}|\geq\delta'/|\qn|^2$ for all $n$, and hence the preceding conclusion implies that for all $q$ such that 
 $|\qnm|<|q|\leq |\qn|$ for some $n\geq n_0$ we have $|q|\,|qz-p|\geq \delta'/\beta (|\an|+1)=\delta''$, say, and so  $|z-\frac{p}{q}|\geq \frac{\delta''} {|q|^2}$. Since there are only finitely many pairs $(p,q)$ such that $|q|<|q_{n_0}|$ and 
 $|z-\frac{p}{q}|\leq \frac{\delta''}{|q|^2}$, we get that there exists $\delta>0$ such that 
 $|z-\frac{p}{q}|\geq \frac{\delta}{|q|^2}$, for all $p, q\in \Gamma$, $q\neq 0$. Thus   
 $z$ is badly approximable with respect to $\Gamma$.  
\end{proof} 

In the light of the comments in Remark~\ref{rem:nearest} it follows  that the characterization as in Corollary~\ref{badapp} holds in particular for expansions corresponding to the nearest integer algorithms on the Euclidean subrings, as established earlier in \cite{Hin}. In the next section we discuss examples of certain other algorithmic expansions for which also it holds.

\subsection{Circles of badly approximable numbers}

It was noted in \cite{Hin} that the case of Corollary~\ref{hbounded} for the nearest integer algorithm proved there implies existence of circles in $\C$ all whose elements are badly approximable with respect to $\Gamma$. We include here a description of such circles arising from the Corollary.  For any  $z \in \C$ and $r>0$ we shall denote by $C(z ,r)$ the circle with centre at $z$ and radius $r$, viz. $C(z,r)=\set{z \in \C \mid |z-\zeta|^2=r^2}$. 

\begin{corollary}\label{badcircles}
 Let $\Gamma$ be a Euclidean subring of $\C$ and $K$ be the quotient field of $\Gamma$. Let $\kappa \in K$ and $r>0$ such that $r^2\in \Q$. Then the following statements are equivalent. 

 i) All points of $C(\kappa, r)$ are badly approximable with respect to $\Gamma$. 

 ii) All points of $C(0, r)$ are badly approximable with respect to $\Gamma$. 

 iii) $r^2$ is not of the form $s/t$, where $s$ and $t$ are norms of some elements of $\Gamma$. 
\end{corollary}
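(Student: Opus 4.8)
The plan is to prove the two equivalences (i) $\Leftrightarrow$ (ii) and (ii) $\Leftrightarrow$ (iii) separately; the first is a soft translation argument and the second carries the arithmetic content. For (i) $\Leftrightarrow$ (ii), I would first note that bad approximability with respect to $\Gamma$ is invariant under translation by any element of $K$: writing $\kappa=a/b$ with $a,b\in\Gamma$, $b\neq 0$, one has, for $p,q\in\Gamma$ with $q\neq 0$, that $|(w+\kappa)-p/q|=|w-(pb-aq)/(bq)|$, so a lower bound $|w-p'/q'|\geq \delta/|q'|^2$ valid for all $p',q'\in\Gamma$ yields $|(w+\kappa)-p/q|\geq (\delta/|b|^2)/|q|^2$ for all $p,q$; applying the same to $-\kappa$ gives the converse. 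Since $C(\kappa,r)=\kappa+C(0,r)$ and $w\mapsto w+\kappa$ is a bijection between the two circles preserving bad approximability, (i) and (ii) are equivalent. For the implication (ii) $\Rightarrow$ (iii) I would argue contrapositively: if $r^2=s/t$ with $s=|\alpha|^2$, $t=|\beta|^2$ for some $\alpha,\beta\in\Gamma$ (necessarily $\beta\neq 0$), then $|\alpha/\beta|^2=r^2$, so $\alpha/\beta$ lies both on $C(0,r)$ and in $K$; but a point of $K$ is never badly approximable (its own representing pair makes the relevant distance zero), so not all points of $C(0,r)$ are badly approximable.

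The substance is the implication (iii) $\Rightarrow$ (ii). Write $r^2=m/n$ with $m,n$ positive integers; since $\Z\subseteq\Gamma$, the Hermitian quadratic polynomial $P(\zeta)=n\zeta\bar\zeta-m$ has coefficients in $\Gamma$ with real leading and constant terms, and by Remark~\ref{rem:zero} its root set is precisely $C(0,r)$. Fix $z\in C(0,r)$, so that $nz\bar z=m$. For $p,q\in\Gamma$ with $q\neq 0$ put $H(p,q)=n|p|^2-m|q|^2$; since a Euclidean subring of $\C$ is an order in an imaginary quadratic field, $|p|^2$ and $|q|^2$ are rational integers, hence so is $H(p,q)$. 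Moreover $H(p,q)=0$ would force $r^2=m/n=|p|^2/|q|^2$, contradicting (iii), so $|H(p,q)|\geq 1$. On the other hand, substituting $m=nz\bar z$ gives $H(p,q)=n\bigl(p\bar p-(zq)\overline{(zq)}\bigr)=n\bigl(p\,\overline{(p-zq)}+\overline{zq}\,(p-zq)\bigr)$, so $|H(p,q)|\leq n(|p|+|zq|)\,|qz-p|$ and therefore $|qz-p|\geq\bigl(n(|p|+|zq|)\bigr)^{-1}$. A short case split then completes the estimate: if $|qz-p|\geq r|q|$ then $|z-p/q|\geq r\geq r/|q|^2$ (using $|q|\geq 1$), whereas if $|qz-p|<r|q|$ then $|p|\leq|qz-p|+|zq|<2r|q|$, so $|p|+|zq|<3r|q|$ and $|z-p/q|=|qz-p|/|q|>(3nr|q|^2)^{-1}$. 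Thus every $z\in C(0,r)$ satisfies $|z-p/q|\geq\delta/|q|^2$ for all $p,q\in\Gamma$ with $q\neq 0$, where $\delta=\min\{r,(3nr)^{-1}\}$ is independent of $z$; so all points of $C(0,r)$ are badly approximable.

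The only genuine obstacle is the fact used in (iii) $\Rightarrow$ (ii) that $n|p|^2-m|q|^2$ is a nonzero rational integer; this rests on identifying a Euclidean subring of $\C$ as an order in an imaginary quadratic field, so that the norm $\gamma\mapsto|\gamma|^2$ takes integer values, and once that is in hand the estimates are entirely routine (and even yield an approximation constant uniform over the circle). I also record an alternative route to (iii) $\Rightarrow$ (i) in the spirit of Hines: apply Corollary~\ref{hbounded} and then Corollary~\ref{badapp} to the polynomial $P$ along a continued fraction expansion whose sequence of denominators is monotone — by (iii) the polynomial $P$ has no root in $K$, so Corollary~\ref{hbounded} forces $\{a_n\}$ to be bounded and Corollary~\ref{badapp} then gives bad approximability; the catch is that one must exhibit such an algorithm for the given ring, which is exactly the matter developed in \S5 and \S6, so the direct argument above is preferable.
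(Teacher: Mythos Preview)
Your proof is correct, and the route you label ``alternative'' at the end is in fact exactly what the paper does: it obtains the implication ``$C(\zeta,r)\cap K=\emptyset\Rightarrow$ all points of $C(\zeta,r)$ are badly approximable'' by applying Corollary~\ref{hbounded} to get bounded partial quotients and then Corollary~\ref{badapp} to conclude bad approximability (implicitly using, as you noted, that the nearest integer algorithm on a Euclidean ring furnishes an expansion meeting the hypotheses of both results). The equivalence (i)$\Leftrightarrow$(ii) in the paper is then derived via this characterization, since $C(\kappa,r)\cap K=\emptyset$ iff $C(0,r)\cap K=\emptyset$, rather than by your direct translation-invariance argument; and the paper's treatment of (ii)$\Leftrightarrow$(iii) passes through a unique-factorization argument to match the lowest-terms representation of $r^2$ with norms, which is more than is needed for the statement as written.

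Your main argument for (iii)$\Rightarrow$(ii) is genuinely different and more elementary: the Liouville-type bound coming from $|\,n|p|^2-m|q|^2\,|\ge 1$ bypasses the continued-fraction machinery entirely and needs only that norms of elements of $\Gamma$ are rational integers. This buys you a self-contained proof with no appeal to the existence of a well-behaved algorithm, and it yields an explicit bad-approximability constant $\delta=\min\{r,(3nr)^{-1}\}$ uniform over the circle. The paper's approach, by contrast, ties the corollary into the continued-fraction framework developed throughout \S\S3--4, which is thematically appropriate there but leaves the dependence on the choice of algorithm implicit.
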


\begin{proof}
 We note that for any $\zeta \in K$ the circle $C(\zeta, r)$, where $r$ is as in the hypothesis, is the set of zeroes of a Hermitian polynomial with coefficients in $\Gamma$ and hence by Corollary~\ref{badapp} and Corollary~\ref{hbounded} all the points of $C(\zeta, r)$ are badly approximable with respect to $\Gamma$ whenever $C(\zeta, r)\cap K=\emptyset$. Equivalence of (i) and (ii) now follows from the fact that $C(\zeta, r)$ contains an element  of $K$ if and only if $C(0,r)$ does. We note that if $p/q$  is an element of $K$ belonging to $C(0,r)$, where $p,q$ are coprime elements in $\Gamma$, and $r^2=s/t$, where $s,t$ are coprime (natural) integers, then we have $p\bar{p} t=q\bar{q} s$, and using that  $\Gamma$ is a unique factorization domain we get that $s=p\bar{p}$ and $t=q\bar{q}$; thus $s$ and $t$  are norms of the elements $p$ and $q$ respectively. Conversely if $r^2$ is of the form $s/t$ where $s$ and $t$ are norms of elements, say $p$ and $q$ respectively, of $\Gamma$, then $p/q\in C(0,r)\cap K$. This proves the equivalence of (ii) and (iii). 
\end{proof}

\begin{remark}
Choosing $n\in \N$ such that $\ds n\equiv 7 \,(\hbox{\rm mod 8})$ which is not a square modulo $3, 7$ and $11$, e.g. $n=1847=(8\times 3 \times 7 \times 11) - 1$, we get a circle defined by $|z|^2=n$, all whose points are badly approximable with respect to all the Euclidean subrings $\Gamma$ as above.  
\end{remark}

A number is said to be {\it well-approximable}, with respect to $\Gamma$, if it is not badly approximable with respect to $\Gamma$. Corollary~\ref{badcircles} in particular implies the following. 

\begin{corollary}
 Let $\Gamma$ be a Euclidean subring of $\C$. Then the set of complex numbers which are  well-approximable with respect to $\Gamma$ is totally disconnected. 
\end{corollary}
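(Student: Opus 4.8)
The plan is to deduce total disconnectedness of the set $W$ of well-approximable numbers from Corollary~\ref{badcircles} by exhibiting, around any point and at arbitrarily small scales, a circle contained entirely in the complement of $W$, i.e. a circle of badly approximable numbers; this separates the plane locally and forces every connected subset of $W$ to be a singleton.

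First I would recall that a subset $W$ of a metric space is totally disconnected if its only nonempty connected subsets are singletons, and that it suffices to show: for any two distinct points $w_1, w_2 \in W$ there is a clopen-in-$W$ set containing $w_1$ but not $w_2$. Concretely, I would produce a circle $C(\kappa,r)$, with $\kappa\in K$ and $r^2\in\Q$, all of whose points are badly approximable (hence disjoint from $W$), and which separates $w_1$ from $w_2$ in $\C$; then $W$ is covered by the two disjoint open sets given by the inside and outside of the circle, each open in $\C$, and their traces on $W$ give the required separation. To find such a circle I would fix a center $\kappa\in K$ close to $w_1$ (possible since $K$ is dense in $\C$, $\Gamma$ being a lattice) lying strictly between $w_1$ and $w_2$ in the sense that some circle about $\kappa$ has $w_1$ inside and $w_2$ outside, and then choose the radius. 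By Corollary~\ref{badcircles} (equivalence of (i) and (iii)), $C(\kappa,r)$ consists of badly approximable points precisely when $r^2$ is not of the form $s/t$ with $s,t$ norms of elements of $\Gamma$.

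The one point needing care is that the forbidden set of radii, $\{\,\sqrt{s/t}: s=p\bar p,\ t=q\bar q,\ p,q\in\Gamma\,\}$, must be ``small'' enough that permissible radii are dense: this set is countable (indexed by pairs $(p,q)$), so its complement in any interval $(0,\varepsilon)$ is nonempty — indeed a dense $G_\delta$ — and I can pick $r$ in $(0, \tfrac12|w_1-w_2|)$ small enough that $C(\kappa,r)$ genuinely separates $w_1$ from $w_2$ while $r^2\in\Q$ and $r^2$ is not such a quotient of norms. (Rationality of $r^2$ is no obstruction: rationals whose square root avoids the countable bad set are still dense.) I expect this countability-of-bad-radii observation to be the main, and essentially only, obstacle; everything else is the standard topological packaging.

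Assembling: given $w_1\neq w_2$ in $W$, choose $\kappa\in K$ and a rational $r^2$, with $r$ small, so that $|w_1-\kappa|<r<|w_2-\kappa|$ and $r^2$ is not a quotient of two norms from $\Gamma$; such a choice exists by the density argument above. Then $C(\kappa,r)\cap W=\emptyset$ by Corollary~\ref{badcircles}, and $\C\setminus C(\kappa,r)$ is the disjoint union of the open disc $U_1=\{|\,\zeta-\kappa|<r\}$ and the open region $U_2=\{|\,\zeta-\kappa|>r\}$, with $w_1\in U_1$, $w_2\in U_2$. Hence $W\subseteq U_1\cup U_2$ with $U_1\cap W$ and $U_2\cap W$ disjoint, open in $W$, and each containing one of the points; so $w_1$ and $w_2$ lie in distinct connected components of $W$. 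Since $w_1,w_2$ were arbitrary distinct points of $W$, no connected subset of $W$ contains two points, i.e. $W$ is totally disconnected.
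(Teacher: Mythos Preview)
Your proof is correct and follows essentially the same route as the paper: separate any two well-approximable points by a circle $C(\kappa,r)$, with $\kappa\in K$ and $r^2\in\Q$ not a quotient of norms, supplied by Corollary~\ref{badcircles}. The paper's proof is a one-sentence assertion of exactly this; you have simply filled in the topological packaging. One small remark: your justification that admissible radii are dense appeals to countability of the forbidden set, but since you also require $r^2\in\Q$, countability alone is not enough---you should instead note directly that for any inert (non-norm) rational prime $p$ and any integer $n$ coprime to $p$, the rational $p/n$ can never be written as a quotient of norms (compare $p$-adic valuations), so such $r^2$ are dense in $(0,\infty)$.
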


\begin{proof}
 The proof is immediate since given any distinct  elements, say $z$ and $w$, well-approximable with respect to $\Gamma$, there exists a circle $C(\kappa, r)$, with $\kappa \in K$ and $r>0$ and $r^2\in \Q$ such that all points of $C(\kappa, r)$ are badly approximable, and $z$ and $w$ are contained in the two disjoint components of $\C\backslash C(\kappa, r)$. 
\end{proof}

\section{Monotonicity of the denominator sequences}

In this section we discuss the monotonicity condition on the absolute values of the denominators of the convergents, involved in various results in the earlier sections, and in particular give new examples of continued fraction expansions for which the property holds. {Also, rather than limiting ourselves to considering sequences arising as continued fraction expansions of elements of $\C'$, we shall consider more general sequences of elements from the Euclidean rings and explore their properties akin to those of continued fraction sequences, and also the issue as to when they are continued fraction expansions of some $z\in \C'$.}\\ 

Let $\Gamma$ be a discrete subring of $\C$, and let $\san_{n=0}^\infty$ be a sequence in 
$\Gamma \backslash \{0\}$. Then the recurrence relations introduced in $\S\,2$ (which depend only on $\san$, not on the $z$ that was involved in the discussion there) yield sequences 
$\spn_{n=-1}^\infty$ and $\sqn_{n=-1}^\infty$ which we call the $\Qp$-pair corresponding to $\san_{n=0}^\infty$; it needs to be borne in mind however that various properties of the pair of sequences in \S\,2 are no longer guaranteed, and in particular, a priori, $\qn$ could be $0$ for some $n$, in the generality as above; however, when monotonicity of $\abs{\qn}$ is proved under certain conditions, they being nonzero would be an automatic consequence.  

We begin by fixing some notation. We denote by $B(z,r)$ the open disc centered at $z\in\C$ with radius $r>0.$ Also, $B(z,1)$ will be denoted simply by $B(z)$. 
For any subset $E$ of $\C$ we denote by $E\inv$ the subset $\set{z\in\C \mid z\inv\in E}$, and by $E^c$ the complement of $E$ in $\C$.

\begin{remark}\label{circinv}
Many computations in the sequel involve $B(z,r)\inv$, where $z\in \C$ and $r>0$. We note that when $|z|>r$, 
$$B(z,r)\inv=B\left(\frac{\zb}{|z^2|-r^2}, \frac{r}{|z^2|-r^2}\right),$$
as can be verified directly. In general the set of inverses of $B(z,r)$ can be determined by identifying its boundary circle, passing through the inverses of three suitably chosen points on the boundary of $B(z,r)$, and noting that if $0\in B(z,r)$, then $B(z,r)\inv$ is the complement of the closed disc bounded by the circle $C$, and is the interior of the disc if $0\notin B(z,r)$. Clearly, similar remarks apply to $\Bb(z,r)$, the closure of $B(z,r)$. 
For convenience we shall in general skip the details of the computations made along these lines.   
\end{remark}

\subsection{General conditions ensuring monotonicity of $\{\abs{\qn}\}$}

Let the notation be as above and for any $n\geq 0$ let $\rn=\qn/\qnm$. Then $\set{\rn}$ satisfy the recurrence relations $\rn=\an+\rnm\inv$ for all $n\geq 1$, and the object of our exploration concerns whether $|\rn|\geq 1$ or, in a stronger form (for strict monotonicity), $|\rn|>1$. With regard to this question we note the following.

\begin{lemma}\label{triple}
 Let $\gamma_0, \gamma_1, \gamma_2 \in \C$ be such that $|\gamma_j|>1$ for $j=0$ and $1$, $0<|\gamma_2|\leq 1$, and $\gamma_j-\gamma_{j-1}\inv\in \Gamma$ for $j=1,2$. For $j=1,2$ let  $\alpha_j=\gamma_j-\gamma_{j-1}\inv $ and suppose that $|\alpha_2|>1$. Then $|\alpha_2|< 2$, $\gamma_1\in \Bb\left(\frac{-\overline{\alpha_2}}{|\alpha_2|^2-1}, \frac{1}{|\alpha_2|^2-1}\right)$, and $\alpha_1\in B\left(\frac{-\overline{\alpha_2}}{|\alpha_2|^2-1}, \frac{|\alpha_2|^2}{|\alpha_2|^2-1}\right)$, viz. $\abs{(|\alpha_2|^2-1)\alpha_1+\overline{\alpha}_2}< |\alpha_2|^2$. 
\end{lemma}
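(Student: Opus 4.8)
The plan is to work directly from the two defining relations $\gamma_1 = \alpha_1 + \gamma_0^{-1}$ and $\gamma_2 = \alpha_2 + \gamma_1^{-1}$, extracting information about $\gamma_1$ and $\alpha_1$ from the constraints on $\gamma_0$, $\gamma_2$ and $\alpha_2$. First I would use $|\gamma_0|>1$, so $|\gamma_0^{-1}|<1$, i.e. $\gamma_0^{-1}\in B(0,1)$. Hence $\gamma_1 = \alpha_1 + \gamma_0^{-1}$ lies in $B(\alpha_1,1)$; equivalently $\alpha_1 = \gamma_1 - \gamma_0^{-1}$ lies in $B(\gamma_1,1)$. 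The real work is to pin down where $\gamma_1$ can be. For that I turn the relation around: $\gamma_1^{-1} = \gamma_2 - \alpha_2$, and since $0<|\gamma_2|\le 1$ we have $\gamma_2 \in \bar B(0,1)$, so $\gamma_1^{-1} \in \bar B(-\alpha_2, 1)$. Because $|\alpha_2|>1$, the point $0$ lies outside (or on the boundary of) the closed disc $\bar B(-\alpha_2,1)$, so I can invert it using the formula recalled in Remark~\ref{circinv}: for $|z|>r$, $\bar B(z,r)^{-1} = \bar B\!\left(\frac{\bar z}{|z|^2-r^2}, \frac{r}{|z|^2-r^2}\right)$. Applying this with $z=-\alpha_2$, $r=1$ gives exactly $\gamma_1 \in \bar B\!\left(\frac{-\bar\alpha_2}{|\alpha_2|^2-1}, \frac{1}{|\alpha_2|^2-1}\right)$, which is the second claim.

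Next, the bound $|\alpha_2|<2$: since $|\gamma_1|>1$ by hypothesis, the disc $\bar B\!\left(\frac{-\bar\alpha_2}{|\alpha_2|^2-1}, \frac{1}{|\alpha_2|^2-1}\right)$ containing $\gamma_1$ must meet the exterior of the closed unit disc, i.e. it cannot be contained in $\bar B(0,1)$. The farthest point of that disc from the origin is at distance $\frac{|\alpha_2|}{|\alpha_2|^2-1} + \frac{1}{|\alpha_2|^2-1} = \frac{|\alpha_2|+1}{|\alpha_2|^2-1} = \frac{1}{|\alpha_2|-1}$ from $0$; requiring this to exceed $1$ (so that some point of the disc has modulus $>1$) forces $|\alpha_2|-1<1$, i.e. $|\alpha_2|<2$. (One should check the degenerate/boundary cases — whether $\gamma_1$ sits exactly on $|\gamma_1|=1$ is excluded by hypothesis $|\gamma_1|>1$, so the strict inequality is fine; and $|\alpha_2|^2-1>0$ since $|\alpha_2|>1$.)

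Finally, for the statement about $\alpha_1$: I have $\alpha_1 = \gamma_1 - \gamma_0^{-1}$ with $\gamma_1$ in the closed disc just found and $\gamma_0^{-1}\in B(0,1)$, so $\alpha_1$ lies in the Minkowski sum of $\bar B\!\left(\frac{-\bar\alpha_2}{|\alpha_2|^2-1}, \frac{1}{|\alpha_2|^2-1}\right)$ and $B(0,1)$ (with a sign flip on the second, which does not affect a disc centered at $0$). That sum is the disc with the same center and radius $\frac{1}{|\alpha_2|^2-1} + 1 = \frac{|\alpha_2|^2}{|\alpha_2|^2-1}$, giving $\alpha_1 \in B\!\left(\frac{-\bar\alpha_2}{|\alpha_2|^2-1}, \frac{|\alpha_2|^2}{|\alpha_2|^2-1}\right)$ — and since one summand is open, this is an open disc, matching the claim. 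Rescaling $|(|\alpha_2|^2-1)\alpha_1 + \bar\alpha_2| < |\alpha_2|^2$ is then immediate. The main obstacle, and the one point needing care rather than routine computation, is the circle-inversion step: verifying that $0$ genuinely lies outside $\bar B(-\alpha_2,1)$ (it does, since $|-\alpha_2| = |\alpha_2| > 1 = r$), so that the clean inversion formula from Remark~\ref{circinv} applies and one does not have to deal with the "exterior of a disc" case; everything after that is bookkeeping with disc centers and radii.
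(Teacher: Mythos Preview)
Your proof is correct and follows essentially the same route as the paper's: place $\gamma_1^{-1}$ in $\bar B(-\alpha_2,1)$, invert via Remark~\ref{circinv} to locate $\gamma_1$, then add the open unit disc coming from $\gamma_0^{-1}$ to locate $\alpha_1$. The one minor difference is that the paper gets $|\alpha_2|<2$ in a single line by the triangle inequality $|\alpha_2|=|\gamma_2-\gamma_1^{-1}|\le|\gamma_2|+|\gamma_1|^{-1}<1+1$, before doing the inversion, whereas you deduce it afterwards from the geometry of the inverted disc.
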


\begin{proof}
 Clearly $|\alpha_2| \leq |\gamma_2|+|\gamma_1\inv|<2$, as $|\gamma_2|\leq 1 <|\gamma_1|\inv$. Now, we have $\gamma_1\inv=\gamma_2-\alpha_2 \in \Bb(-\alpha_2, |\gamma_2|)\subset \Bb(-\alpha_2, 1)$. 
 Since by  assumption $|\alpha_2|>1$, Remark~\ref{circinv} now implies that $\gamma_1\in \Bb\left(\frac{-\overline{\alpha_2}}{|\alpha_2|^2-1}, \frac{1}{|\alpha_2|^2-1}\right)$.
 As $\alpha_1=\gamma_1-\gamma_0\inv$ and $|\gamma_0|>1$ the preceding conclusion implies also that $\alpha_1$ is contained in the open ball $B\left(\frac{-\overline{\alpha_2}}{|\alpha_2|^2-1}, \frac{1}{|\alpha_2|^2-1}+1\right)
 =B\left(\frac{-\overline{\alpha_2}}{|\alpha_2|^2-1}, \frac{|\alpha_2|^2}{|\alpha_2|^2-1}\right)$, which proves the  lemma. 
\end{proof}

\begin{remark}
The conditions $|\alpha_1|>1 $, $1<|\alpha_2|<2$ and $\abs{(|\alpha_2|^2-1)\alpha_1+\overline{\alpha_2}}<|\alpha_2|^2$
can be interpreted  for the rings $\Gamma=\G$ and $\E$, of  Gaussian and Eisenstein integers, respectively, as follows (though a similar description can be given in the case of other Euclidean rings, we shall not go into it here, as the expressions get cumbersome, and the corresponding observations will not be involved in our discussions in the sequel).:

i) Let $\Gamma=\G$. Then $\alpha_2=(1+i)\,i^l$ for some $l$ (where $0\leq l\leq 3$). 
Writing  $\alpha_1 i^{l}$ as $x+iy$, where $x,y \in \Z$, 
the conditions on $\alpha_1$ are given by $|\alpha_1|>1$ and $\abs{\alpha_1+(1-i)i\inv}<2$, 
and translate to the conditions $x^2+y^2>1$ and $(x+1)^2+(y-1)^2<4$. There are $6$  solutions for the pair $(x,y)$, given by $-x$ and $y$ between $0$ and $2$ such that $x+y>1$. Thus $\alpha_1=\beta\, i\inv$ where $\beta \in \set{-x+iy \mid -2\leq x\leq 0,\, 0\leq y \leq 2,\, -x+y>1}$.
   
ii) Let $\Gamma =\E$.  Then $\alpha_2=j\rho^l$ for some $l$, $0\leq l \leq 5$, where $j=\sqrt{3}i$ and $\rho =\frac{1}{2}(1+\sqrt{3}i)$. Then the conditions on $\alpha_1$ are  that $|\alpha_1|>1$ and  $|2\alpha_1 -j\rho\inv|<3$, or equivalently $|2\alpha_1 \rho\inv -j|<3$. Writing  $\alpha_1 \rho^{l}$ as $\frac{1}{2} (x+jy)$, where $x,y \in \Z$ and $x+y\in 2\Z$, we get the conditions $x^2+3y^2>4$ and $x^2+3(y-1)^2<9$. The only solutions for $(x,y)$  are $(0,2)$ and $(\pm 2,2)$. Thus $\alpha_1=\beta  \rho\inv$ where $\beta \in \set{j, \pm 1+j}$.
\end{remark}

The Lemma yields the following criterion which was proved earlier in \cite{D-gen}; in the proof in \cite{D-gen} justification at the starting point is missing in the inductive argument, which would be cleared by the following argument. 

\begin{corollary}\label{cor:mon}
 Let the notations $\Gamma$, $\san$, and $\sqn$  be as above. 
 Suppose that $|\an|>1$ for all $n\geq 1$. Let $\rn=\qn/\qnm$ for all $n\geq 1$. Then at least one of the following holds:

 i) $|\qnp|>|\qn|$ for all $n\geq 0$. 

 ii) There exists $m\geq 1$ such that $|\rn|>1$ for $n=1,\dots, m$, $|r_{m+1}|\leq 1$, 
 $|a_{m+1}|< 2$, $$r_m\in \Bb\left(\frac{-\bar a_{m+1}}{|a_{m+1}|^2-1}, \frac{1}{|a_{m+1}|^2-1}\right)  \hbox{ and } a_m\in B\left(\frac{-\bar a_{m+1}}{|a_{m+1}|^2-1}, \frac{|a_{m+1}|^2}{|a_{m+1}|^2-1}\right).$$
\end{corollary}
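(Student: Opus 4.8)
The plan is to reduce the dichotomy to the behaviour of the ratios $r_n = q_n/q_{n-1}$ and to invoke Lemma~\ref{triple} at the first index where monotonicity could fail. I would begin with the elementary bookkeeping: since $q_{-1}=0$ and $q_0=1$, we have $q_1=a_1$, hence $r_1=a_1$ and $|q_1|=|a_1|>1=|q_0|$, and the recurrence $r_n=a_n+r_{n-1}^{-1}$ holds for every $n\geq 1$ under the convention $r_0^{-1}=0$. A short induction shows that if $|r_j|>1$ for $j=1,\dots,n$, then $q_0,q_1,\dots,q_n$ are all nonzero and $|q_{j+1}|=|r_{j+1}|\,|q_j|$ for $0\leq j\leq n$; consequently statement (i) holds precisely when $|r_n|>1$ for every $n\geq 1$.

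Next I would assume (i) fails and let $m$ be the least index $\geq 0$ with $|q_{m+1}|\leq |q_m|$; since $|q_1|>|q_0|$ we have $m\geq 1$. By minimality, $|q_0|<|q_1|<\cdots<|q_m|$, so $q_{m-1}$ and $q_m$ are nonzero, the ratios $r_{m-1}$ (when $m\geq 2$), $r_m$ and $r_{m+1}$ are defined, $|r_n|>1$ for $n=1,\dots,m$, and $|r_{m+1}|\leq 1$. Moreover $r_{m+1}\neq 0$: if it vanished, then $a_{m+1}=r_{m+1}-r_m^{-1}=-r_m^{-1}$ would satisfy $|a_{m+1}|=|r_m|^{-1}<1$, contradicting the hypothesis $|a_{m+1}|>1$; thus $0<|r_{m+1}|\leq 1$.

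For $m\geq 2$ I would then apply Lemma~\ref{triple} with $(\gamma_0,\gamma_1,\gamma_2)=(r_{m-1},r_m,r_{m+1})$: its hypotheses hold because $|\gamma_0|=|r_{m-1}|>1$, $|\gamma_1|=|r_m|>1$, $0<|\gamma_2|=|r_{m+1}|\leq 1$, and $\gamma_j-\gamma_{j-1}^{-1}=a_j\in\Gamma$ for $j=1,2$, so that $\alpha_1=a_m$, $\alpha_2=a_{m+1}$ with $|\alpha_2|=|a_{m+1}|>1$. The conclusion of the lemma is then verbatim the list of constraints asserted in (ii).

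The delicate part --- and precisely where the argument in \cite{D-gen} is incomplete --- is the base case $m=1$, where $\gamma_0=r_0=q_0/q_{-1}$ is not literally defined. I would dispose of it by noting that the proof of Lemma~\ref{triple} uses $\gamma_0$ only through $\gamma_0^{-1}$, so that with the convention $r_0^{-1}=0\in\Gamma$ one has $\alpha_1=r_1-r_0^{-1}=a_1=r_1$; then $r_1^{-1}=a_1^{-1}=r_2-a_2\in\bar B(-a_2,|r_2|)\subset\bar B(-a_2,1)$, and since $|a_2|>1$, Remark~\ref{circinv} gives $r_1\in\bar B\!\left(\tfrac{-\bar a_2}{|a_2|^2-1},\tfrac{1}{|a_2|^2-1}\right)$, whence $a_1=r_1$ also lies in the slightly larger open ball $B\!\left(\tfrac{-\bar a_2}{|a_2|^2-1},\tfrac{|a_2|^2}{|a_2|^2-1}\right)$ because $\tfrac{|a_2|^2}{|a_2|^2-1}=1+\tfrac{1}{|a_2|^2-1}$; moreover $|a_2|=|r_2-r_1^{-1}|\leq|r_2|+|r_1^{-1}|<2$. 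This establishes (ii) for $m=1$ as well, and beyond this one step everything is routine unwinding of the definition of $r_n$ and of the recurrence, so I do not anticipate any further obstacle.
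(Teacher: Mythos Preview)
Your proposal is correct and follows essentially the same route as the paper: you isolate the base case $m=1$ by a direct computation (exactly the ``starting point'' the paper supplies to repair the argument in \cite{D-gen}) and then invoke Lemma~\ref{triple} with $(\gamma_0,\gamma_1,\gamma_2)=(r_{m-1},r_m,r_{m+1})$ for $m\geq 2$. Your write-up is in fact slightly more careful than the paper's in two places---you explicitly check $r_{m+1}\neq 0$ (needed for the hypothesis $0<|\gamma_2|$ of the lemma) and you spell out why $a_1=r_1$ lies in the larger open ball; the only slip is the line ``$\gamma_j-\gamma_{j-1}^{-1}=a_j$'', which should read $a_{m+j-1}$, though your next clause ``so that $\alpha_1=a_m$, $\alpha_2=a_{m+1}$'' makes the intended meaning clear.
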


\begin{proof}
 Since $q_0=1$ and $q_1=a_1$, by hypothesis we have $|q_1/q_0|=|a_1|>1$. Now suppose $|q_2|\leq |q_1|$. We show that in this case statement (ii) holds for $m=1$. We have $r_2=|q_2/q_1|\leq 1$  and, since $|q_2/q_1|= |a_2+q_1\inv|=|a_2+a_1\inv|$,
 $|a_2| \leq |q_2/q_1|+|a_1|\inv\leq 1+|a_1|\inv<2$. Also as $|a_2+a_1\inv|=|q_2/q_1|\leq 1$, $a_1\inv\in \Bb(-a_2, 1)$, and hence by Remark~\ref{circinv}  $a_1\in \Bb\left(\frac{-\overline{a_2}}{|a_2|^2-1}, \frac{1}{|a_2|^2-1}\right)$. 
 Since $r_1=a_1$, this shows that statement (ii) holds for $m=1$.
  
 Now suppose that $|\qnp|>|\qn|$ for $n=0, \dots, m-1$ and $|q_{m+1}|\leq |q_m|$, for some $m\geq 2$. Now for $j=0,1$ and $2$ define  $\gamma_j=q_{m+j-1}/q_{m+j-2}$.  We see that $|\gamma_j|>1$ for $j=0,1$, $|\gamma_2|\leq 1$, and for $j=1,2$, $\gamma_j-\gamma_{j-1}\inv= a_{m+j-1}\in \Gamma$. Thus the conditions in Lemma~\ref{triple} are satisfied with $\alpha_1=a_m$ and $\alpha_2=a_{m+1}$.
 The lemma then implies that statement~(ii) of the corollary holds. 
\end{proof}
 
The following Corollary which readily follows from Corollary~\ref{cor:mon} shows the flexibility available in principle for modifying continued fraction expansions without losing the monotonicity property for the denominators of the convergents.  
 
\begin{corollary}\label{cor:gen}
 Let $\set{F_\gamma}$ be a family of subsets of $B(0)$ such that 

 i) $\C= \bigcup_{\gamma \in \Gamma} \gamma +F_\gamma$.

 ii) For $\theta \in \Gamma$ with $|\theta|=1$, $\theta + F_\theta \subset \C \backslash (\bigcup_{\gamma \in \Gamma} F_\gamma\inv)$.

 iii) For $\theta \in \Gamma$ with $1<|\theta|<2$ and $\varphi \in B\left(\frac{-\overline{\theta}}{|\theta|^2-1}, \frac{|\theta|^2}{|\theta|^2-1}\right)$, $F_\varphi\inv\cap (\theta +F_\theta)= \emptyset$. 

 Let $z\in \C'$, $\san$ be a continued fraction expansion of $z$, $\szn$  the corresponding iteration sequence, and $\sqn$ the sequence of denominators from the corresponding $\Qp$-pair. Suppose that $z_n\in \an+F_{\an}$ for all $n\geq 0$. Then $|\qnp|>|\qn|$ for all $n$. 
\end{corollary}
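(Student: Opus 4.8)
\textbf{Plan for proving Corollary~\ref{cor:gen}.}
The plan is to invoke Corollary~\ref{cor:mon} and show that its alternative (ii) cannot occur under the hypotheses (i)--(iii) on the family $\{F_\gamma\}$, so that alternative (i), namely $|q_{n+1}|>|q_n|$ for all $n$, must hold. First I would check that Corollary~\ref{cor:mon} is applicable: I need $|a_n|>1$ for all $n\geq 1$. This follows from hypothesis (ii): since $z_n\in a_n+F_{a_n}$ and, for an index with $|a_n|=1$, hypothesis (ii) would force $z_n\in a_n+F_{a_n}\subset \C\setminus(\bigcup_\gamma F_\gamma)$; but on the other hand $z_n-a_n\in F_{a_n}\subset \bigcup_\gamma F_\gamma$, a contradiction unless $z_n=a_n$, which is impossible since $z\in\C'$ forces $z_n\notin\Gamma$. (If $|a_n|<1$ then $z_n\in a_n+F_{a_n}\subset a_n+B(0)$ would give $|z_n-a_n|<1$ consistently, so the real content is ruling out $|a_n|=1$; I would also note $a_n\neq 0$ for $n\geq 1$ since $z_n\notin B(0)$ as $z_{n-1}\notin\Gamma$ gives $|z_{n-1}-a_{n-1}|>0$ hence $|z_n|=|z_{n-1}-a_{n-1}|^{-1}$, but the cleaner route is that $|a_n|>1$ directly excludes $a_n=0$.) Let me restate this more carefully: the iteration sequence satisfies $z_n\in a_n+F_{a_n}$, and since $z\in\C'$ we have $z_n\notin K\supset\Gamma$, so $z_n\neq a_n$, hence $z_n-a_n$ is a nonzero element of $F_{a_n}$. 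If $|a_n|=1$, then by (ii) $a_n+F_{a_n}$, and in particular $z_n$, lies in $\C\setminus\bigcup_\gamma F_\gamma$; yet $z_n-a_n\in F_{a_n}\subset\bigcup_\gamma F_\gamma$. These two facts are about $z_n$ and $z_n-a_n$ respectively, so to get a contradiction I must be slightly more careful --- actually the point is that (ii) says $\theta+F_\theta$ is disjoint from $\bigcup F_\gamma$, and we want to conclude $|a_n|\neq 1$; the way this is used is in the next step, so I would simply carry the case distinction forward.

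The heart of the argument is to suppose, for contradiction, that alternative (ii) of Corollary~\ref{cor:mon} holds: there is an $m\geq 1$ with $|r_j|>1$ for $1\leq j\leq m$, $|r_{m+1}|\leq 1$, $1<|a_{m+1}|<2$, and $a_m\in B\left(\frac{-\bar a_{m+1}}{|a_{m+1}|^2-1},\frac{|a_{m+1}|^2}{|a_{m+1}|^2-1}\right)$. Now I bring in the dynamics: $r_{m+1}=q_{m+1}/q_m=a_{m+1}+r_m^{-1}$, and more usefully the relation between the $r_n$ and the $z_n$. The key link is that $z_{n+1}=(z_n-a_n)^{-1}$ while $r_{n+1}=a_{n+1}+r_n^{-1}$; one shows (as in \cite{DN}, \cite{D-gen}, and implicit in Proposition~\ref{lazy1}) that $z_{n+1}$ and $-q_n/q_{n-1}=-r_n$ play parallel roles, specifically $z_{n+1}+r_n^{-1}\cdot(\text{something})$... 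The precise identity I want is $z_{m+1}=(z_m-a_m)^{-1}$ together with the fact that $z_m-a_m\in F_{a_m}$, so $z_{m+1}\in F_{a_m}^{-1}$. I would then set $\theta=a_{m+1}$ and $\varphi=a_m$: hypothesis (iii) applies precisely because $1<|a_{m+1}|<2$ and $a_m=\varphi\in B\left(\frac{-\bar\theta}{|\theta|^2-1},\frac{|\theta|^2}{|\theta|^2-1}\right)$, giving $F_{a_m}^{-1}\cap(a_{m+1}+F_{a_{m+1}})=\emptyset$. But $z_{m+1}\in a_{m+1}+F_{a_{m+1}}$ by the standing hypothesis $z_n\in a_n+F_{a_n}$ (with $n=m+1$), and $z_{m+1}\in F_{a_m}^{-1}$ as just observed --- contradiction. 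Hence (ii) is impossible and (i) holds.

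\textbf{Main obstacle.} The delicate point is the claim ``$z_{m+1}\in F_{a_m}^{-1}$,'' i.e. correctly identifying that $z_{m+1}=(z_m-a_m)^{-1}$ with $z_m-a_m\in F_{a_m}\setminus\{0\}$, so that $z_{m+1}$ lies in the set of inverses of $F_{a_m}$. This is actually immediate from the definition of the iteration sequence ($z_{m+1}=(z_m-a_m)^{-1}$) and the hypothesis $z_m\in a_m+F_{a_m}$, so the genuine subtlety is instead making sure Corollary~\ref{cor:mon}'s case (ii) is being applied at the right index and that the disc in (ii) for $a_m$ matches verbatim the disc in hypothesis (iii) for $\varphi$ --- which it does, by construction. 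A secondary point worth spelling out is why $m+1$ is a legitimate index for the hypothesis $z_n\in a_n+F_{a_n}$ (it is, since that hypothesis is assumed for all $n\geq 0$), and why $|r_j|>1$ for $j\leq m$ combined with $|r_{m+1}|\le 1$ doesn't already contradict something --- it is exactly the data fed into Lemma~\ref{triple} via Corollary~\ref{cor:mon}. So the proof is short: establish applicability of Corollary~\ref{cor:mon}, assume its case (ii), extract $\theta=a_{m+1}$, $\varphi=a_m$, apply (iii), and derive the contradiction with $z_{m+1}\in(a_{m+1}+F_{a_{m+1}})\cap F_{a_m}^{-1}$.
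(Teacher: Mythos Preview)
Your approach is exactly what the paper intends: it simply says the corollary ``readily follows from Corollary~\ref{cor:mon},'' and your plan---apply Corollary~\ref{cor:mon}, then rule out alternative~(ii) by taking $\theta=a_{m+1}$, $\varphi=a_m$ and invoking hypothesis~(iii) against the membership $z_{m+1}\in(a_{m+1}+F_{a_{m+1}})\cap F_{a_m}^{-1}$---is precisely the intended one-line argument. That part is correct as written.

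The wrinkle you flag about hypothesis~(ii) is a real observation, not a defect in your reasoning. As literally printed, $\theta+F_\theta\subset\C\setminus\bigcup_\gamma F_\gamma$ does not directly yield $|a_n|>1$: you correctly note the two memberships you obtain concern $z_n$ and $z_n-a_n$ separately. The condition that actually does the work (and is the evident intent, matching how the analogous clause is used in Proposition~\ref{geom-hurwitz}) is
\[
\theta+F_\theta\subset\C\setminus\bigcup_{\gamma\in\Gamma}F_\gamma^{-1}\qquad(|\theta|=1),
\]
since for $n\geq1$ one has $z_n=(z_{n-1}-a_{n-1})^{-1}\in F_{a_{n-1}}^{-1}$, whence $z_n\in(\theta+F_\theta)\cap F_{a_{n-1}}^{-1}$ would be impossible. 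With that reading, the verification of $|a_n|>1$ is immediate and your proof is complete; do not ``carry the case distinction forward,'' because Corollary~\ref{cor:mon} needs $|a_n|>1$ as a standing hypothesis before its dichotomy can be invoked.
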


We note that all elements involved in condition~(ii) of Corollary~\ref{cor:mon} are of absolute value less than $3$. Hence if $f:\C\to\Gamma$ and $g:\C\to\Gamma$ are two algorithms such that $f\inv(a)=g\inv(a) $ for all $a\in \Gamma$ with $|a|< 3$, then the condition holds for $f$ if and only if it holds for $g$, irrespective of their values at other points. Hence 
given an algorithm $f:\C\to \Gamma$ such that for the corresponding continued fraction expansions $\san$ we have $|\an|>1$ for all $n\geq 1$ and condition~(ii) of Corollary~\ref{cor:mon} does not hold, we can construct modified algorithms for which also the monotonicity statement holds, provided for the new algorithm we ensure the condition $|\an|>1$ for all $n\geq 1$. This is illustrated by the following example. 
 
\begin{example}\label{Eisen-exampl}
Let $\Gamma=\E$, the ring of Eisenstein integers. Let $H$ be the closed region whose boundary is the hexagon with vertices at $\rho^l/\sqrt{3}$, $l=0,\dots, 5$, where $\rho=\frac{1}{2}(1+i\sqrt{3})$, a $6$th root of unity. The nearest integer algorithm for $\E$, say $f$, corresponds, upto ambiguities at the boundaries, to setting $f(z)=a$ if $z\in a+H$. Now consider another algorithm $g$ such that if $g(z)=a$ and $|a|\leq 2$ then $z\in a+H$ and, in general, if $g(z)=a$ then $g(z)\in a+\frac{5}{4} H$ (there would be multiple possibilities for points away from the origin,  and the specific algorithm has to pick one of them, by a  convention).  As observed above to prove monotonicity of $\abs{\qn}$'s corresponding to the continued fraction expansion $\san$ of a $z\in \C'$ it suffices to show that $|\an|>1$ for all $n\geq 1$. By symmetry considerations it suffices to show that $\an\neq 1$ for any $n\geq 1$. 

The set $\set{\zeta -g(\zeta) \mid \zeta \in \C}$ is by construction contained in $\frac{5}{4} H$. The set of inverses of the latter set may be seen to be the complement of the $6$ discs $\rho^l B(\frac{4}{5}, \frac{4}{5})$, $l=0,\dots ,5$. It can also be verified directly that $1+H$ is contained in $B(\frac{4}{5}, \frac{4}{5})$. Therefore $\an\neq 1$, and hence, as argued above, we get that $|\qnp|>|\qn|$ for all $n\geq 0$. We may mention here that the factor $5/4$ chosen above is not the maximum possible; determination of the optimal value involves some cumbersome algebra which it does not seem worthwhile going into at this stage, so we have chosen an ad hoc value known  from our calculations to be close to the optimal one. 

If in place of the factor $\frac{5}{4}$ we restrict to algorithms $g$ for which $g(z)\in a +\chi H$ where $1<\chi <\frac{3}{2}(\sqrt{3}-1)$, then for the expansions corresponding to the algorithm we get $|\zn|>1 +\sqrt{3}$, so the condition as in Remark~\ref{rem:badapp} is satisfied in this case, and hence for these  expansions, by Proposition~\ref{badapp}, boundedness of partial quotients characterizes the number being badly approximable with respect to $E$. 
\end{example}

\begin{remark}
One of the questions that one might consider in this respect is to have a common set $F$ as $F_\gamma$ for all $\gamma \in \Gamma$. It was shown in \cite{D-gen} that when $\Gamma=\E$, the ring of Eisenstein integers, the conditions as in Corollary~\ref{cor:gen} are satisfied with $F_\gamma =F=B(0,r)$ for all $r \in \left(\frac{1}{\sqrt{3}}, \sqrt{\frac{1}{4}(5-\sqrt{13}})\right)$. 
\end{remark}

\section{More on monotonicity for expansions in Gaussian integers}

In this section we shall dwell further on the theme of monotonicity of the denominators, restricting to the case of the ring $\G=\Z[i]$ of Gaussian integers. General continued fractions were studied extensively in this case in \cite{DN}, establishing results on monotonicity of the denominator sequences, and discussing some applications. We describe some classes of examples in continuation of the results there.

We shall denote by $\Sigma$ the group of transformations of $\C$ generated by the transformations $z\mapsto -z$ and $z\mapsto \zb$. We define $\sigma_y\in \Sigma$ to be the transformation defined by $\sigma_y(z)=-\zb$ for all $z\in \C$, viz. the reflection in the $y$ axis. 
 
We begin by recalling the following definition from \cite{DN}. 
 
\begin{definition}
A sequence $\san_{n=0}^\infty$ in $\G$  is said to satisfy {\it Condition\,(H)} if $|\an|>1$ for all $n\geq 1$, and for all $k,l$ such that $1 \leq k <l$, $|a_l|=\sqrt{2}$ and $a_j=2\sigma_y^{l-j}(a_l)$ for all $j=k+1, \dots l-1$ (none if $k=l-1$), we have either $a_k=2\sigma_y^{l-k}(a_l)$ or $\abs{a_k-\sigma_y^{l-k}(a_l)}\geq 2$. 
\end{definition} 
 
We recall that by  Proposition~5.9 of \cite{DN} if $\san$ is a sequence in $\G$ satisfying Condition~(H) and $\spn, \sqn$ is the corresponding $\Qp$ pair, then $|\qn|>|\qnm|$ for all $n\geq 1$.

\subsection{Variations on the Hurwitz algorithm} 

In this subsection we shall show that for algorithms obtained by certain  modifications of the Hurwitz algorithm (see Example~\ref{h-perturb} below)  monotonicity of the denominator sequence holds. 

For convenience we shall consider continued fraction algorithms to be defined only over $\C'$; the extension to $\C$ can be arbitrary, but will not play a role. For a continued fraction algorithm $f:\C'\to\G$ we call the set $\set{z-f(z) \mid z\in \C'}$ the {\it fundamental set} of $f$. 

\begin{proposition}\label{geom-hurwitz}
 Let $f:\C' \to \G$ be an algorithm for continued fraction expansions and $F$ be the fundamental set of $f$. For $a\in \frak G$ let $C(a)=f\inv(\set{a}) \cap F\inv$. Suppose that $C(a)=\emptyset$ when $|a|=1$ and that there exists a subset $Q$ of $F$  such that the following conditions are satisfied for any $a=\sigma (1+i)$, $\sigma \in \Sigma$: 

 i) $C(a)\inv$ is contained in $\sigma (Q)$;

 ii)  if $b\in B(\sigma_y( a),2)\cap \G$, and $b\neq -2\bar{a}$ then 
 $(b+\sigma (Q))\cap F\inv=\emptyset$;

 iii) for $b=-2\bar{a}$, $(b+\sigma (Q))\inv$ is contained in $\sigma_y (Q)$. 

\noindent Let $z\in \C'$, $\san$ be the continued fraction expansion of $z$ with respect to $f$ and $\sqn$ be the corresponding sequence of denominators. Then $|\qn|<|\qnp|$ for all $n\geq 0$. 
\end{proposition}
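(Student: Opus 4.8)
The plan is to suppose the conclusion fails, invoke Corollary~\ref{cor:mon}, and contradict its alternative~(ii) using the three hypotheses on $f$ and $Q$ together with the inversion formula of Remark~\ref{circinv}. First one must check $|a_n|>1$ for all $n\ge 1$, so that Corollary~\ref{cor:mon} is applicable: for $n\ge 1$ we have $|z_n|>1$ since $|z_{n-1}-a_{n-1}|<1$, so $a_n\neq 0$; moreover $z_n^{-1}=z_{n-1}-a_{n-1}\in F$, i.e.\ $z_n\in F^{-1}$, hence $z_n\in f^{-1}(\{a_n\})\cap F^{-1}=C(a_n)$, and since $C(a)=\emptyset$ whenever $|a|=1$ this forces $|a_n|\ge\sqrt2$. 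If alternative~(i) of Corollary~\ref{cor:mon} holds we are done, so assume alternative~(ii) with some (first) index $m\ge 1$ at which monotonicity fails; then $1<|a_{m+1}|<2$ forces $|a_{m+1}|=\sqrt2$, say $a_{m+1}=\sigma(1+i)$ with $\sigma\in\Sigma$. Since $|a_{m+1}|^2-1=1$ and $-\bar a_{m+1}=\sigma_y(a_{m+1})$, the corollary yields $r_m\in\bar B(\sigma_y(a_{m+1}),1)$ and $a_m\in B(\sigma_y(a_{m+1}),2)\cap\frak G$.

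Next I bring in the algorithm. For every $n\ge 1$ the computation above shows $z_n\in C(a_n)$, hence $z_{n-1}-a_{n-1}=z_n^{-1}\in C(a_n)^{-1}$; taking $n=m+1$ and using hypothesis~(i), $z_m-a_m=z_{m+1}^{-1}\in\sigma(Q)$, so $z_m\in a_m+\sigma(Q)$. As $a_m\in B(\sigma_y(a_{m+1}),2)\cap\frak G$, hypothesis~(ii), applied with the element $a_{m+1}=\sigma(1+i)$, gives $(a_m+\sigma(Q))\cap F^{-1}=\emptyset$ \emph{unless} $a_m=-2\bar a_{m+1}$; but $z_m$ lies in both $a_m+\sigma(Q)$ and $F^{-1}$ (the latter because $m\ge 1$), so necessarily $a_m=-2\bar a_{m+1}=2\sigma_y(a_{m+1})$. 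In that case hypothesis~(iii) gives $z_{m-1}-a_{m-1}=z_m^{-1}\in(a_m+\sigma(Q))^{-1}\subseteq\sigma_y(Q)$, so $z_{m-1}\in a_{m-1}+\sigma_y(Q)$.

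The remaining work is to show this last configuration is self-contradictory. Substituting $a_m=2\sigma_y(a_{m+1})$ in $r_{m-1}^{-1}=r_m-a_m$ shows $r_{m-1}^{-1}\in\bar B(\bar a_{m+1},1)$, whence $r_{m-1}\in\bar B(a_{m+1},1)$ by Remark~\ref{circinv}; in particular $m=1$ is excluded, since then $|a_1|=|{-2\bar a_2}|=2\sqrt2$ while $a_1=r_1\in\bar B(\sigma_y(a_2),1)$ forces $|a_1|\le\sqrt2+1$. For $m\ge 2$ one reads off $a_{m-1}\in B(a_{m+1},2)\cap\frak G$ from $r_{m-1}\in\bar B(a_{m+1},1)$ together with $|r_{m-2}|>1$ (or $r_1=a_1$ when $m=2$), and one is back in a configuration of exactly the shape just reached, now at level $m-1$, with $\sigma_y(Q)$ in place of $\sigma(Q)$ and with the appropriate $\Sigma$-translate of the disc. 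One then iterates: at each stage either hypothesis~(ii) contradicts $z_{m-k}\in F^{-1}$, or hypothesis~(iii) strips off one more index, while the $r$-recursion keeps each $r_{m-k}$ inside a radius-$1$ disc about a point of modulus $\sqrt2$, so the constraint $|r_{m-k}|>1$ from Corollary~\ref{cor:mon}(ii) is maintained. Since $q_0=1$ the descent must terminate, and it can only terminate upon reaching $r_1=a_1$; but at that point $a_1$ would have to be simultaneously of modulus $2\sqrt2$ and within distance $1$ of a point of modulus $\sqrt2$, which is impossible. This contradiction rules out alternative~(ii), so $|q_n|<|q_{n+1}|$ for all $n\ge 0$.

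The step I expect to be genuinely delicate is the iteration in the last paragraph: one must verify that at each descent the $\Sigma$-translate of $Q$ that occurs and the disc supplied jointly by Corollary~\ref{cor:mon} and the $r$-recursion line up precisely with the region $B(\sigma_y(\,\cdot\,),2)$ and the exceptional value $-2\overline{(\,\cdot\,)}$ appearing in hypotheses~(ii)--(iii), and one must dispose of the short initial segments ($m=1,2$) and the terminal step of the descent by hand.
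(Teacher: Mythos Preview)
Your approach is correct but genuinely different from the paper's. The paper does not argue directly from Corollary~\ref{cor:mon}; instead it verifies that $\{a_n\}$ satisfies \emph{Condition~(H)} of \cite{DN} (namely: $|a_n|>1$ for all $n\ge1$, and whenever $|a_l|=\sqrt2$ and $a_j=2\sigma_y^{\,l-j}(a_l)$ for $j=k+1,\dots,l-1$, then either $a_k=2\sigma_y^{\,l-k}(a_l)$ or $|a_k-\sigma_y^{\,l-k}(a_l)|\ge2$), and then simply invokes Proposition~5.9 of \cite{DN}, which guarantees strict monotonicity of $|q_n|$ under Condition~(H). The paper therefore only iterates along the $z$-recursion, using hypotheses~(i)--(iii) to force the alternative $a_k=2\sigma_y^{\,l-k}(a_l)$ step by step; it never touches the ratios $r_n=q_n/q_{n-1}$, and it does not need to manufacture a terminal contradiction, since Condition~(H) allows the chain $a_j=2\sigma_y^{\,l-j}(a_l)$ to run all the way.

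What your route buys is self-containment: you avoid the external citation to \cite{DN} and work entirely with Corollary~\ref{cor:mon} and Remark~\ref{circinv}, at the price of tracking the $r$-recursion in parallel with the $z$-recursion and closing out the descent numerically at $r_1=a_1$. This is exactly the style the paper itself adopts later in proving Theorem~\ref{Gaussian}, so your instinct is sound. The delicate bookkeeping you flag is real: at each stage one must check that the $\Sigma$-translate of $Q$ produced by hypothesis~(iii) is $\sigma_y\!\circ\!\sigma$ (so that the center $\sigma_y(\tau(1+i))$ required by hypothesis~(ii) coincides with the center your $r$-computation produces), and that the alternation $c_k\in\{\sigma(1+i),\sigma_y\sigma(1+i)\}$ persists. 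Once that is pinned down, your terminal contradiction ($|a_1|=2\sqrt2$ versus $|a_1|\le\sqrt2+1$) is clean. The paper's approach is shorter and more modular; yours is more elementary.
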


\begin{proof}
 We shall show that $\san$ satisfies Condition~(H); as noted above, by Proposition~5.9 of \cite{DN} this implies the desired monotonicity assertion. 
 We note that since by hypothesis $C(a)=\emptyset$ for all $a\in\G$ with $|a|=1$, we have $|\an|>1$ for all $n\geq 1$, and hence it remains only to verify the second part of the condition in the definition. 
 Let $k,l$ such that $1\leq k <l$, $|a_l|=\sqrt{2}$ and $a_j=2\sigma_y^{l-j}(a_l)$ for all $j=k+1, \dots l-1$ (none if $k=l-1$), be given. 
 As $1<|a_l|<2$, we have $a_l=\pm 1\pm i$.  
 For convenience we shall consider only $a_{l}=1+i$; the other cases follow from symmetry, or alternatively can be proved by an analogous argument. 

 Now let $\szn$ be the iteration sequence corresponding to the  expansion. 
 Suppose that $|a_{l-1}-\sigma_y ( a_l)|<2$.  For $a_l=1+i$ this implies that $a_{l-1}$ is one of $-2, -1+i, 2i, -2+i, -1+2i$ or $-2+2i$. 
 Now,  $z_{l}\in C(a_l) =C(1+i)$, and hence $z_{l-1}-a_{l-1}=z_l\inv\in Q$, by Condition~(i) in the hypothesis. Thus $z_{l-1}\in a_{l-1}+Q$ and since 
 ${l-1}\geq 1$, we also have $z_{l-1}\in F\inv$. Hence Condition~(ii) implies that $a_{l-1}=-2+2i$. In particular this proves the desired assertion in the case when $k=l-1$. 

 Now, $z_{l-1}\in a_{l-1}+Q=-2+2i+Q$ and hence by Condition~(iii) 
 $z_{l-1}\inv\in\sigma_y(Q)$. Thus $z_{l-2}- a_{l-2}\in \sigma_y(Q)$. Suppose that 
 $\abs{a_{l-2}-a_l}<2$; as $a_l=1+i$ this means that $a_{l-2}$ is one of 
 $2, 1+i, 2i, 2+i, 1+2i$ or $2+2i$. We have 
 $z_{l-2}\in a_{l-2}+\sigma_y(Q)$ and, as $1\leq k\leq l-2$, $z_{l-2} \in F\inv$. Hence by Condition~(ii), applied with $a=\sigma_y(1+i)=-1+i$, we get that the only possibility is $a_{l-2}=2+2i$. By repeated application of this argument successively until reaching $k$ we see that either $a_k=2\sigma_y^{l-k}(a_l)$ or $\abs{a_k-\sigma_y^{l-k}(a_l)}\geq 2$. This shows that $\set{a_k}$ satisfies Condition~(H), and thus proves the proposition. 
\end{proof}

It can be seen that the conditions of Proposition~\ref{geom-hurwitz} hold for continued fraction expansions with respect to the nearest integer algorithm for $Q=Q_h$ where $$Q_h=\set{z=x+iy \mid x\in [0,\mfrac{1}{2}],\, y\in [-\mfrac{1}{2},0] \hbox { \rm and }(1-x)^2+(1+y)^2< 1}.$$ 
 We now describe a somewhat more general class of algorithms for which also the  conditions of the proposition hold for a modified choice of $Q$.

\begin{example}\label{h-perturb}
	Let $S=\set{z=x+iy \mid |x| \leq \frac{1}{2}, |y|\leq \frac{1}{2}}$ and for $0<r<1-\frac{1}{\sqrt{2}}$ let 
	$S_r=S\cup (\cup_{\sigma \in\Sigma} B(\frac 12 \sigma (1+i), r)$; then $S_r$ is contained in the unit disc. 
	Let $f:\C'\to \G$ be any continued fraction algorithm such that 
	$f\inv(a)\subset a+S_r$  for all $a\in\G$. We shall show that for small $r>0$, the conditions as Proposition 6.2 are satisfied for the continued fraction expansions of any $z\in\C'$ with respect to $f$.
	
	By hypothesis the fundamental set $F$ of $f$  is contained in $S_r$. We note that for all $\sigma \in \Sigma$,  $B(\frac{1}{2}\sigma (1+i), r)\inv$ is contained in $B(\overline{\sigma} (1+i),  \frac{2r}{1-r\sqrt{2}})$.  Hence $F\inv$ is contained in the set
	$$\Phi:=\Big[\bigcap_{\sigma\in\Sigma} B(\sigma(1))^c\Big] \bigcup \Big[\bigcup_{\sigma \in\Sigma} B(\sigma(1+i),r')\Big],$$
	where $r':=2r/(1-r\sqrt 2)$.
 	Now let $r>0$ be such that $r+r'<\frac{1}{\sqrt{2}}$ and $r'<(\sqrt{2}-1)$; 
	former condition may be seen to be equivalent to $r<\sqrt 2-\sqrt {3/2} \approx 0.1894\dots$; the condition however involves an ad hoc choice made for simplicity of computation and the value does not represent an optimal one for the class of examples. 
	
	We choose $Q_r=Q_h\cup B(\frac{1}{2}(1-i), r)$, where $Q_h$ is as defined above. Observe that $Q_r=S_r\cap B(1-i).$ We show that the conditions of Proposition~\ref{geom-hurwitz} are satisfied for $Q=Q_r$. 
	We note that since for $a=1$, $a+S_r$ is contained in $B(1)$ and using that $r+r'<\frac{1}{\sqrt{2}}$ it can be seen that it does not intersect $B(\sigma (1+i), r')$ for any $\sigma \in \Sigma$. This shows that $C(1)=\emptyset$ and similarly we get that $C(a)=\emptyset$ for all $a\in \G$ with $|a|=1$. 
	We next verify conditions~(i), (ii) and (iii) of Proposition\ref{geom-hurwitz}.
	We carry out the verification  for $a=1+i$, viz. when $\sigma$ is the identity transformation; for the other cases the conclusion follows from symmetry (or by analogous argument). 
	
	We note that $C(1+i)$ is contained in $B(1+i)$ and hence $C(1+i)\inv$ is contained in $B(1+i)\inv=B(1-i)$. Since $S_r\cap B(1-i)=Q_r$, this shows that $C(1+i)\inv$ is contained in $Q_r$, thus proving Condition~(i), for the case at hand. 
	Next consider Condition~(ii). We have to show that for $b\in\set{-2, -1+i, 2i, -2+i, -1+2i}$, $(b+Q_r)\cap F^{-1}=\emptyset$. So it would suffice to check that $b+Q_r$ does not intersect the set $\Phi$ as above. For all the values of $b$ as above it is immediate that $b+Q_h$ is contained in either $B(-1)$ or $B(i)$, and moreover the condition that $r+r'<\frac{1}{\sqrt{2}}$ and $r'<(\sqrt{2}-1)$  implies that it does not intersect  $B(\sigma (1+i),r')$  for any $\sigma \in \Sigma$. Similarly we see that $b+B(\frac{1}{2}(1-i), r)$ is contained in one of 
	$B(-1)$ and $B(i)$ and on account of the condition that $r+r'<\frac{1}{\sqrt{2}}$ it does not intersect $B(\sigma (1+i),r')$  for any $\sigma \in \Sigma$. Thus we get that $(b+Q_r)\cap F\inv=\emptyset$ as desired, confirming condition (ii). 
	
	To verify Condition~(iii) for $a=1+i$ we have to show that $((-2+2i)+Q_r)\inv$ is contained in $\sigma_y(Q)$. We in fact show that it is contained in $\sigma_y(Q_h)=S\cap B(-1-i).$ Since $Q_h\subset S$, we have 
	$Q_r\subset S \cup B(\frac{1}{2}(1-i),r)$, and thus $(-2+2i)+Q_r$ is contained in 
	$\left[(-2+2i)+S\right]\cup B(\frac{3}{2}(-1+i),r)$ which does not intersect any $B(\sigma(1))$ for any $\sigma\in\Sigma$, since $r+r'<\frac{1}{\sqrt{2}}.$ This shows that $[(-2+2i)+Q_r]\inv \subset S$ since $S\inv=\cap_{\sigma\in\Sigma} B(\sigma(1))^c$. On the other hand $(-2+2i)+Q_r$ is also contained in $B(-1+i)$ and hence its inverse is contained in $B(-1+i)\inv=B(-1-i)$. Hence $\left[(-2+2i)+Q_r\right]\inv\subset S\cap B(-1-i)= \sigma_y (Q_h)$, which proves  Condition~(iii) for $a=1+i$. 
	As noted above, by symmetry considerations this shows that the conditions of  Proposition~\ref{geom-hurwitz} are satisfied for the algorithms as above. Hence by the proposition $|\qn|<|\qnp|$ for all $n$, for the corresponding denominator sequence $\sqn$.  
\end{example}

\begin{remark}
In place of the discs $B(\sigma (1+i), r)$ added at the corners of $S$ in this example, one could also consider adding other shapes, and in certain cases the same conclusion as above could be arrived at, without actually the shape being contained in an $S_r$ for which the result is proved. 
In \cite{DN} an algorithm was introduced, by the name {\it PPOI algorithm} (an acronym of ``partially preferring odd integers"), as an illustration of an algorithm with  the property that the set of the corresponding continued fraction expansions  of numbers from $\C\backslash K$ can be characterised as the set of all sequences that preclude occurrence of (finite) blocks from a finite collection (this property does not hold for the Hurwitz algorithm); we shall not go into further details of this aspect here. 
That algorithm also involved adding a certain shape at each corner, but the latter is not contained in any disc of radius less than $\sqrt 2/6$.  The monotonicity assertion as above is however upheld there, by a somewhat intricate method.  
\end{remark}

\subsection{Algorithms with a noncompact fundamental set}

The examples in \cite{DN}, for which the monotonicity of the denominator sequence is upheld have their fundamental sets contained in a ball $B(r)$ with $r<1$. In this subsection we shall discuss some examples for which this is not the case. 
 
Let $H$ denote the square $\set{z=x+iy \mid |x|+|y|\leq 1}$, the closed square with vertices at $\pm 1$ and $\pm i$. It is straightforward to see that 	
$$H\inv=\bigcap_{\sigma\in\Sigma} B\left(\sigma\left(\mfrac{1}{2}(1+i)\right) ,\mfrac{1}{\sqrt{2}} \right)^c.$$

A Gaussian integer $z=x+iy$ is said to be \emph{even} or \emph{odd}, respectively, according to whether the sum $x+y$ is even or odd. We prove the following. 

\begin{theorem}\label{Gaussian}
 Let $z\in \C'$, $\san$ be a continued fraction expansion of $z$ 	
 and $\szn$ be the corresponding iteration sequence. Suppose that for all $n\geq 1$ the following conditions are satisfied:
 i) $\zn\in \an+ H$, and 
 ii) if $|\an|<3$ then $\an$ is even. 
 Then for the corresponding denominator sequence $\sqn$ from the $\Qp$-pair we have $|\qnp|>|\qn|$ for all $n\geq 0$. 
\end{theorem}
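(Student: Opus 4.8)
The plan is to prove the equivalent assertion that $r_n := q_n/q_{n-1}$ satisfies $|r_n|>1$ for every $n\ge 1$ (which in particular forces each $q_n\ne 0$), and to derive this from Corollary~\ref{cor:mon}. A first, routine observation is that $|a_n|>1$ for all $n\ge 1$: indeed $|z_n|>1$ for $n\ge 1$, so $a_n\ne 0$, and if $|a_n|<3$ then hypothesis~(ii) makes $a_n$ a nonzero even Gaussian integer, hence $|a_n|\ge\sqrt 2$, while if $|a_n|\ge 3$ there is nothing to check. Thus $|r_1|=|a_1|>1$ and Corollary~\ref{cor:mon} is available.

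Suppose, for contradiction, that $\{|q_n|\}$ is not strictly increasing. Then alternative~(ii) of Corollary~\ref{cor:mon} applies: there is $m\ge 1$ with $|r_j|>1$ for $1\le j\le m$, $|r_{m+1}|\le 1$, $|a_{m+1}|<2$, and --- since by~(ii) $a_{m+1}$ must then be one of $\pm1\pm i$, so that $|a_{m+1}|^2-1=1$ --- $r_m\in\bar B(-\bar a_{m+1},1)$ and $a_m\in B(-\bar a_{m+1},2)$. Translating by an element of $\Sigma$, which permutes $\{\pm1\pm i\}$, preserves parity and the square $H$, and carries iteration sequences to iteration sequences, I may assume $a_{m+1}=1+i$, so that $-\bar a_{m+1}=\sigma_y(1+i)=-1+i$.

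Next I would run a backward descent, refining the computation behind Lemma~\ref{triple} with~(ii) as an extra constraint. As long as a partial quotient agrees with the ``model'' value $2\sigma_y^{m+1-j}(a_{m+1})$, the disc condition propagates one step lower in the form $r_{j-1}\in\bar B(\sigma_y^{m+2-j}(a_{m+1}),1)$; at the first index $k\in[1,m]$ where it fails, $r_k$ still lies in a closed unit disc about a corner $\pm1\pm i$, hence $a_k=r_k-r_{k-1}^{-1}$ (with $r_1=a_1$ when $k=1$) lies within distance $2$ of that corner and satisfies $1<|a_k|<3$, so by~(ii) it is even; only a short list of possibilities for $a_k$ survives, and where needed a further descent step pins down $a_{k-1}$ similarly. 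At this point hypothesis~(i) enters. Propagating the localization $z_{m+1}\in(1+i)+H$ downward through the intervening partial quotients --- each time combining $z_j\in a_j+H$ with $z_j-a_j=z_{j+1}^{-1}$, which also forces $z_{j+1}\in H^{-1}$ --- produces regions $\mathcal R_j$ with $z_j\in\mathcal R_j$; and a computation with inverses of translates of $H$, for which the identity $H^{-1}=\bigcap_{\sigma\in\Sigma}B\big(\sigma(\frac{1}{2}+\frac{1}{2} i),\frac{1}{\sqrt 2}\big)^{c}$ and the disc-inversion formulas of Remark~\ref{circinv} suffice, shows that each admissible ``bad'' value collapses the relevant region so far that $z_t=a_t+z_{t+1}^{-1}\in\bar B(0,1)$ for a suitable index $t\ge 1$, contradicting $|z_t|>1$. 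Since every case is contradictory, no such $m$ exists, i.e.\ $|q_{n+1}|>|q_n|$ for all $n\ge 0$.

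The crux, and the main obstacle, is the last paragraph: the chain-of-discs descent (a somewhat delicate induction, with cases according to the parity of $m+1-k$ and to whether $k=1$) and, above all, the case-by-case verification that a bad block endpoint forces some iterate into the closed unit disc. Hypothesis~(ii) is indispensable here --- it confines the partial quotients entering the picture to even Gaussian integers of modulus $<3$, for which $H$ tiles the plane under the sublattice $(1+i)\frak G$ and the inversion geometry stays tractable, and it outright kills the odd candidates $\pm2\pm i$, $\pm1\pm2i$ that the geometry alone would not exclude.
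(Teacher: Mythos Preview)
Your setup is sound: $|a_n|>1$ for all $n\ge 1$, Corollary~\ref{cor:mon} applies, and after a $\Sigma$-normalisation the failure of monotonicity yields $m\ge 1$ with $a_{m+1}=1+i$ and $r_m\in\bar B(-1+i,1)$. From there, however, your plan diverges from the paper's argument in two essential ways, and the part you flag as ``the crux'' is not merely a computation left undone but a structural misreading.

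First, the descent on $r_n$ is not organised around a single ``model chain'' $a_j=2\sigma_y^{m+1-j}(a_{m+1})$ with an exceptional first deviation. What the paper shows is that at \emph{every} step the even constraint from~(ii) leaves exactly three candidates for $a_j$ (e.g.\ $\{-2,\,2i,\,-2+2i\}$ when the current corner is $-1+i$), and for each of them $r_j-a_j$ lands in $B(\sigma(1+i))$ for some $\sigma\in\Sigma$, so that $r_{j-1}\in B(\tau(1+i))$ for a new corner $\tau(1+i)$ and the descent simply continues --- no branch is ``bad'' in the sense of terminating the recursion. The contradiction is reached only at $j=1$, where $r_1=a_1$ forces $r_1-a_1=0$, and $0$ does not lie in $R:=\bigcup_{\sigma}B(\sigma(1+i))$. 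Your proposed contradiction $z_t\in\bar B(0,1)$ never occurs in this argument and is not what one should aim for.

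Second, and this is the main gap, the role of hypothesis~(i) is not to propagate regions $\mathcal R_j$ for the $z_j$ through the whole chain. The paper uses~(i) exactly once, at the very start, to extract a clean \emph{forward} constraint: if $a_n=\sigma(1+i)$ for some $n\ge 1$, then $a_{n+1}\in\sigma(L)$ where $L=\{x+iy:y\le x+1\}$. (This follows from $z_n\in H^{-1}$, so $z_{n+1}^{-1}\in H^{-1}-\sigma(1+i)\subset \bar B(-\tfrac12\sigma(1+i),\tfrac1{\sqrt2})^c$, whose inverse is the open half-plane $\sigma(L)^\circ$.) This single lemma is then invoked at each step of the $r_n$-descent to rule out the possibility $a_j=\text{(current corner)}$: if $a_j$ were that corner $\tau(1+i)$, the forward constraint would place $a_{j+1}$ in $\tau(L)$, but the corner governing step $j+1$ is precisely $\tau(1+i)$'s ``opposite'' under $\sigma_y$, and one checks it does not lie in $\tau(L)$. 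With the corner excluded, the even condition~(ii) leaves only the three values above, and the descent proceeds. Your proposal to track the $z_j$ directly and look for a collapse into $\bar B(0,1)$ misses this mechanism; without the forward constraint you have no way to exclude $a_j$ equal to the corner itself, and with it you no longer need to follow the $z_j$ at all.
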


\begin{proof}
 We first show that under the conditions as in the hypothesis, if  $\an=\sigma(1+i)$, for some $n\geq 1$ and $\sigma \in \Sigma$, then $\anp\in \sigma (L)$, where $L$ is  the affine semi-plane $\set{x+iy \mid y\leq x+1}$. Since $H$ is $\Sigma$-invariant, it suffices to prove this in the case when $\sigma$ is the identity transformation. Now let $\an=1+i$. As noted earlier,
 $H\inv\subset \Bb(\frac{1}{2}(1+i),\frac{1}{\sqrt{2}})^c\cup \{\pm 1, \pm i\},$
 and hence
 $(H\inv-\an)\cap\C'\subset \Bb\big(-\frac{1}{2}(1+i),\frac{1}{\sqrt{2}} \big)^c.$ 
 It may be verified that $\Bb\big(-\frac{1}{2}(1+i),\frac{1}{\sqrt{2}} \big)\inv =\set{x+iy \mid y\geq x+1}$. 
 Hence $(H\inv-\an)\cap\C'\subset L^{\circ}$, where 
 $L^{\circ} =\set{x+iy \mid y< x+1}$ is the interior of $L$ as in the hypothesis. Since $\znp\inv=\zn-\an=(\znm-\anm)\inv-\an$ (as $n\geq 1$), and further, since $\znp \in \C'$, it follows that $\znp \in L^\circ $. It may be observed that for $a\in \Gamma$, $(a+H)\cap L^\circ$ is nonempty only if $a\in L$. Since $\znp \in (\anp +H)$, the preceding conclusion implies   that $\anp \in L$. 

 Now suppose that the monotonicity assertion in the theorem does not hold. 
 Noting that for $a\in\G$ the condition $1<|a|<2$ implies that $a=\sigma (1+i)$, for some $\sigma \in \Sigma$, we get from  Corollary~\ref{cor:mon} that there exists $m\geq 1$ such that $|\rn|>1$ for $n=1,\dots, m$, $a_{m+1}=\sigma (1+i)$, and $r_m\in B(\sigma(-1+i))$. 
 We show that $a_m\neq \sigma (-1+i)$.  Suppose if possible that $a_m= \sigma (-1+i)=\sigma\sigma_y  (1+i)$. Then  by the first part we would have $a_{m+1} \in \sigma \sigma_y (L)$. Since $a_{m+1}=\sigma (1+i)$, this implies that $1+i\in \sigma_y (L)$, namely $-1+i\in L$, which is a contradition, and hence $a_m\neq \sigma (-1+i)$. 

 We have thus shown that there exists $m\geq 1$ such that $|\rn|>1$ for $n=1,\dots, m$, and there exists $\nu=\pm 1\pm i$ such that $r_m \in B(\nu)$ and $a_m\neq \nu$. 
 We shall show that the statement implies that   
 $\rn-a_n\in R:=\cup_{\sigma \in \Sigma}B(\sigma (1+i)) $  for all $n=1,\dots, m$. Since $r_1=a_1$ and $0\notin R$ this would give a contradiction. 

 In view of the symmetry it suffices to consider the case $\nu =1+i$. 
 Thus we have $a_m\neq 1+i$, and $r_m\in B(1+i)$. Since $r_m\in  B(1+i)\cap B(a_m)$  we get that  $\abs{a_m-(1+i)}<2$, which further means $\abs{a_m-(1+i)}\leq \sqrt{2}$. Hence $|a_m|\leq 2\sqrt{2} <3$, and by the condition in the hypothesis it is even. 
 Since $a_m\neq 1+i$, and nonzero, this  implies that $a_m\in \set{2, 2i, 2+2i}$. We see from this that $(1+i)-a_m$ is of the form $\pm 1 \pm i$. Thus $(1+i)-a_m=\sigma (1+i)$, for some $\sigma \in \Sigma$. Since $r_m\in B(1+i)$ this shows that  $r_m-a_m \in B(\sigma (1+i)) \subset R$. In particular this completes the proof in the case $m=1$. We shall prove the general case using downward induction. For this it would suffice to show that under the conditions in the hypothesis there exists $\nu=\pm 1\pm i$ such that $r_{m-1}\in B(\nu)$ and $a_{m-1}\neq \nu$. 

 Recall that $r_{m-1}\inv =r_{m}-a_{m}$ is contained in $B(\sigma (1+i) )$ for the (unique) $\sigma \in \Sigma$ such that $\sigma (1+i)=1+i-a_m$. 
 Hence $r_{m-1}\in B(\sigma (1+i))\inv = B(\sigma (1-i))$. Let $\tau \in \Sigma$ be defined by $\tau (\zeta) =\overline {\sigma (\zeta)}$ for all $\zeta \in \C$.  Then  we have  $r_{m-1}\in B(\tau (1+i))$. Suppose, if possible, that $a_{m-1}=\tau (1+i)$. 
 Then by the first part of this proof $a_m\in\tau(L).$ We shall show using the possible choices for $\an$ (as noted earlier) that, that is not the case.
 If $a_m=2$ then $\sigma (x+iy)=-x+iy$, so $\tau (x+iy)=-x-iy$ for all $x,y\in \R$, and $a_m=2$ is not contained in $\tau (L)$. Similarly if $a_m=2i$ then $\sigma (x+iy)=x-iy$ and hence
 $\tau (x+iy)=x+iy$ for all $x,y$, so $\tau (L)=L$ and it does not contain $2i$, and if $a_m=2+2i$ then $\sigma (x+iy)=-x-iy$, so $\tau (x+iy)=-x+iy$ for all $x,y\in \R$ and $\tau (L) $ does not contain $2+2i$. Hence $a_{m-1}\neq \tau (1+i)$. This completes the inductive argument and we get that $\rn-\an\in R$ for all $n=1,\dots, m$. As noted above this is a contradiction since $r_1=a_1$ and $0\notin R$. Therefore $|\qnp|> |\qn|$ for all $n\geq 0$. 
\end{proof}

The following is an immediate consequence of Theorem~\ref{Gaussian}, Corollaries~\ref{Lag-mon} and~\ref{hbounded}.

\begin{corollary}\label{cor-even}
 Let the notation and hypothesis be as in Theorem~\ref{Gaussian}. Then  
 $z$ is a quadratic surd if and only if $\san$ is eventually periodic and it is  badly approximable with respect to $\G$ if and only $\san$ is bounded. 
\end{corollary}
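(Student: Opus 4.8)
The plan is to obtain both equivalences by feeding the strict monotonicity of $\{|q_n|\}$ supplied by Theorem~\ref{Gaussian} into the Lagrange-type result of \S\,3 and the badly-approximable results of \S\,5.

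First I would record what Theorem~\ref{Gaussian} gives: under hypotheses (i)--(ii), $|q_{n+1}|>|q_n|$ for all $n\ge 0$, so $\{|q_n|\}$ is monotonic and $|q_{n-1}|\le|q_n|$ for every $n\ge 1$. I would also note two incidental facts. First, for $n\ge 1$ one has $|z_n|>1$: indeed $z_n^{-1}=z_{n-1}-a_{n-1}$ has modulus at most $1$ --- for $n=1$ this is part of the definition of a continued fraction expansion, and for $n\ge 2$ it is because $z_{n-1}-a_{n-1}\in H\subseteq\overline{B}(0,1)$ --- while modulus exactly $1$ is excluded, the only points of $H$ of modulus $1$ being the vertices $\pm1,\pm i$, which would put $z_{n-1}$ in $\frak G\subseteq K$, contrary to $z_{n-1}\in\C'$. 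In particular $z_n\notin H$, so $a_n\ne 0$ for $n\ge 1$. Second, the data (i)--(ii) determine $a_n$ as a function of $z_n$ up to the locus where translates of $H$ overlap, which is resolved by a fixed convention; accordingly I would treat $\{a_n\}$ as the expansion of $z$ with respect to a genuine $\frak G$-valued continued fraction algorithm $f$ --- the ``even'' variant of the Hurwitz algorithm with fundamental set $H$ --- so that Corollary~\ref{Lag-mon} is available (as are Corollary~\ref{hbounded} and the observations surrounding Corollary~\ref{badapp}).

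For the first equivalence, the implication ``$\{a_n\}$ eventually periodic $\Rightarrow z$ quadratic surd'' is general and was noted just before Corollary~\ref{Lagrange}. Conversely, if $z$ is a quadratic surd, clearing denominators presents $z$ as a root of an irreducible quadratic $az^2+bz+c$ with $a,b,c\in\frak G$; since $|q_{n-1}|\le|q_n|$ for all $n\ge 1$, the first alternative in Corollary~\ref{Lag-mon} applies and yields that $\{a_n\}$ is eventually periodic. For the second equivalence, ``$z$ badly approximable $\Rightarrow\{a_n\}$ bounded'' uses only the monotonicity, exactly as in the paragraph preceding Corollary~\ref{badapp}: by Proposition~\ref{lazy1}(\ref{lazy1.4}), $|z_{n+1}+q_{n-1}/q_n|^{-1}=|q_n|^2|z-p_n/q_n|\ge\delta>0$ for all $n$, and $|q_{n-1}/q_n|\le 1$ then forces $|z_{n+1}|\le 1+\delta^{-1}$ and $|a_{n+1}|\le 2+\delta^{-1}$. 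For the reverse implication, if $\{a_n\}$ is bounded then $\{z_n\}$ is bounded, hence $|z_{n+1}+q_{n-1}/q_n|\le|a_{n+1}|+2$ is bounded above, hence $|\delta_n|=|z_{n+1}+q_{n-1}/q_n|^{-1}$ is bounded below; together with $|q_{n-1}/q_n|\le 1$ this already yields $|z-p_n/q_n|\ge\delta'/|q_n|^2$ for all $n$, i.e.\ the badly-approximable inequality for every $p/q$ of the form $p_n/q_n$. The remaining ranges $|q_{n-1}|<|q|\le|q_n|$ are then handled by Proposition~\ref{app-pr} as in the proof of Corollary~\ref{badapp}, treating those $n$ with $|\delta_n|\ge\nu(\frak G)$ separately via the observation that then $p_n/q_n$ is itself a poor approximant --- or, equivalently, through the Hermitian-form mechanism of Corollary~\ref{hbounded}.

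The substance of the corollary lies entirely in Theorem~\ref{Gaussian}, which is already established; what remains is bookkeeping, with two points of care. The minor one is the reduction noted above, identifying the (i)--(ii) expansions with algorithmic ones so that Corollary~\ref{Lag-mon} applies literally. The real obstacle is the reverse implication for bad approximability: because the fundamental set $H$ touches the unit circle at its vertices, neither $\inf_n|z_n|>1$ nor $\limsup_n|\delta_n|<\nu(\frak G)=\sqrt 2$ can be asserted, so Corollary~\ref{badapp} is not applicable verbatim; one must instead run the case analysis of Proposition~\ref{app-pr} directly --- isolating those $n$ at which $|z_{n+1}|$ is near $1$, where $|\delta_n|$ is large and hence $p_n/q_n$ is already a weak approximation --- or lean on Corollary~\ref{hbounded}. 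I expect that to be the one step which is not purely formal.
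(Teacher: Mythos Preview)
Your approach is the paper's own: feed the strict monotonicity from Theorem~\ref{Gaussian} into Corollary~\ref{Lag-mon} for the Lagrange equivalence and into the \S\,4 machinery for bad approximability. (The paper's one-line proof cites Corollary~\ref{hbounded} where Corollary~\ref{badapp} is what is actually relevant; this looks like a slip.) You are more careful than the paper in flagging that the hypotheses of Corollary~\ref{badapp} are not immediate, and you are right to single out the implication ``$\{a_n\}$ bounded $\Rightarrow$ $z$ badly approximable'' as the one non-formal step.

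One of your two concerns dissolves, however. If $\{a_n\}$ is bounded then so is $\{z_n\}$, hence $\{z_n-a_n\}_{n\ge 1}\subset H$ cannot accumulate at a vertex of $H$ (otherwise $|z_{n+1}|\to\infty$ along a subsequence, contradicting boundedness); since the vertices are the only points of $H$ of modulus $1$, compactness gives $\sup_{n\ge 1}|z_n-a_n|<1$, whence $\inf_n|z_{n+1}|>1$ after all. The remaining hypothesis $\limsup_n|\delta_n|<\nu(\frak G)=\sqrt 2$ is genuinely more delicate and is not supplied by the paper either, so your caution there is well placed; closing it does require either a direct run through Proposition~\ref{app-pr} or some additional geometric input about the even-integer algorithm. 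Your side remark that the expansion must be treated as algorithmic for the periodicity conclusion is also correct and is implicit in the paper's appeal to Corollary~\ref{Lag-mon}.
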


\begin{example}
Let $f:\C \to \G$ be a continued fraction algorithm defined by taking
$f(z)$, $z\in \C$ to be the even integer nearest to $z$; when there are more than one at the same distance, a suitable convention may be followed for making the choice.  For the corresponding continued fraction expansions it can be seen that the fundamental set $\Phi_f=H$. As no odd integer appears among the partial quotients,   
by Theorem \ref{Gaussian} we get $|\qn|> |\qnm|$ for all $n\geq 1$. Thus, in particular, the conclusions as in Corollary~\ref{cor-even} hold in this case.  For more details about this algorithm see \cite{Hur-J}; this paper in particular contains a proof of the monotonicity of the absolute values of the denominators and the version of Lagrange theorem, but the details seem to be complicated, and unclear to the authors. 
Theorem~\ref{Gaussian} places this example in a broader perspective. 
\end{example}

\begin{example} 
Let $\Lambda$ be the subset of $\G$ consisting of elements of the form $x+iy$ where $x,y\in \Z$ and $|x+y|$ is either $0,2,4$ or an odd number which is at least $5$. It can be seen that $\Lambda +H=\C$ and hence we can have an algorithm $f:\C\to \Lambda$ such that $z-f(z)\in H$; e.g. we may take $f(z)$ to be the element of $\Lambda $ nearest to $z$, breaking ties through a suitable convention. We see that the conditions of Theorem~\ref{Gaussian} are satisfied in this case and hence we get that $|\qnp|>|\qn|$ for all $n$ in the notation as above.   
\end{example}

\vskip1cm

\begin{flushleft}
S.G. Dani  \hfill Ojas Sahasrabudhe\\
UM-DAE Centre for Excellence in \hfill Department of Mathematics\\
Basic  Sciences, University of Mumbai  \hfill Indian Institure of Technology Bombay\\
Santacruz, Mumbai 400098, India  \hfill Powai, Mumbai 400076, India\\
\end{flushleft}

\end{document}